\newtheoremstyle{break}
 {} 
 {} 
 {\itshape} 
 {} 
 {\bfseries} 
 {} 
 {\newline} 
 {\thmname{#1}\thmnumber{ #2}\thmnote{ (#3)}} 
\newtheoremstyle{breakdef}
 {} 
 {} 
 {} 
 {} 
 {\bfseries} 
 {} 
 {\newline} 
 {\thmname{#1}\thmnumber{ #2}\thmnote{ (#3)}} 
\newtheoremstyle{remark}
 {} 
 {} 
 {} 
 {} 
 {\itshape} 
 {.} 
 {0.5em} 
 {\thmname{#1}\thmnumber{ #2}\thmnote{ {\normalfont (}#3{\normalfont )}}} 
\theoremstyle{breakdef}
\newtheorem{definition}{Definition}[section]
\theoremstyle{remark}
\newaliascnt{rem}{definition}  
\newaliascnt{exa}{definition}  
\newaliascnt{lem}{definition}  
\newtheorem{lem}[lem]{Lemma}
\newaliascnt{conv}{definition}  
\theoremstyle{break}
\newtheorem{thm}{Theorem}
\newaliascnt{prop}{definition}  
\newtheorem{prop}[prop]{Proposition}
\newaliascnt{conj}{definition}  
\newaliascnt{cor}{definition}  
\newtheorem{cor}[cor]{Corollary}
\numberwithin{figure}{section}			
\DeclareMathOperator{\Isom}{Isom}
\DeclareMathOperator{\SL}{SL}
\newcommand{\Mthick}{{M_{\mathrm{thick}}}}
\newcommand{\dthick}{{d_{\mathrm{thick}}}}
\newcommand{\drel}{{d_{\mathrm{rel}}}}
\newcommand{\kth}{$k^\textrm{\scriptsize th}$\ }
\newcommand{\nminusoneth}{$(N-1)^\textrm{\scriptsize th}$\ }
\newcommand\blfootnote[1]{
  \begingroup
  \renewcommand\thefootnote{}\footnote{#1}
  \addtocounter{footnote}{-1}
  \endgroup
}
\author{Anja Randecker\thanks{University of Toronto, 40 St George St, Toronto ON, Canada, \texttt{anja@math.toronto.edu}.}, Giulio Tiozzo\thanks{University of Toronto, 40 St George St, Toronto ON, Canada, \texttt{tiozzo@math.toronto.edu}. \newline Partially supported by NSERC and the Alfred P. Sloan Foundation.}}
\title{Cusp excursion in hyperbolic manifolds \\ and singularity of harmonic measure}
\date{\today}
\begin{document}
\renewcommand{\sectionautorefname}{Section}
\renewcommand{\subsectionautorefname}{Subsection}
 
\maketitle

\begin{abstract}
 We generalize the notion of cusp excursion of geodesic rays by introducing for any~$k\geq 1$ the \emph{\kth excursion} in the cusps of a hyperbolic $N$--manifold of finite volume.
 We show that on one hand, this excursion is at most linear for geodesics that are generic with respect to the hitting measure of a random walk. On the other hand, for~$k = N-1$, the \kth excursion is superlinear for geodesics that are generic with respect to the Lebesgue measure.
 We use this to show that the hitting measure and the Lebesgue measure on the boundary of hyperbolic space~$\mathbb{H}^N$ for any $N\geq 2$ are mutually singular.
\end{abstract}

\blfootnote{\textup{2010} \textit{Mathematics Subject Classification}: \textup{60G50, 53D25, 60G30}}
\blfootnote{key words: random walks, cusp excursion, harmonic measure, hitting measure}

\enlargethispage*{1cm}
\vspace*{-0.4cm}

\section{Introduction} 

In the Poincar\'e disk, it is well-known that a Brownian motion starting at the center converges almost surely to the boundary of the disk, 
and the resulting hitting measure on $\partial \mathbb{D}$ coincides with Lebesgue measure. 
Other choices of base point lead also to measures in the Lebesgue measure~class. 

A discrete version of the Brownian motion is provided by a random walk.   
Let $X = \mathbb{H}^N$ and let $\Gamma$ be a discrete group of isometries of $X$. Then for any measure $\mu$ on 
$\Gamma$, we can define a random walk 
$$w_n \coloneqq g_1 \dots g_n$$
where $(g_i)$ are i.i.d.~elements of $\Gamma$, with distribution $\mu$.
If $\Gamma$ is non-elementary, then the random walk converges almost surely to the boundary of $X$, and one has the \emph{hitting measure} on $\partial X \cong S^{N-1}$ defined as
$$\nu(A) \coloneqq \mathbb{P}\left(\lim_{n \to \infty} w_n x \in A\right)$$
which is also the unique \emph{$\mu$--harmonic measure} on $\partial X$. This elicits the

\smallskip

\textbf{Question.} Is the hitting measure $\nu$ absolutely continuous with respect to Lebesgue measure?

\medskip

This question has a long history (see \cite{KLP}), starting with Furstenberg (\cite{furstenberg}, \cite{fur71}), who proved that for any lattice~$\Gamma$ in a semisimple Lie group, there exists a random walk on $\Gamma$ which produces an absolutely continuous measure on the boundary. This is the starting point for Furstenberg's rigidity theory.
This approach was generalized by Lyons--Sullivan \cite{Lyons-Sullivan}. Moreover, Connell--Muchnik \cite{CM-Conformal} showed that a random walk with absolutely continuous hitting measure exists on any hyperbolic group which acts cocompactly on a manifold.

All measures constructed with these methods have infinite support and finite first moment in the hyperbolic metric.
The situation changes if one considers measures with stronger moment conditions, for instance with finite support.
Indeed, if $\Gamma = \SL(2, \mathbb{Z}) < \Isom (\mathbb{H}^2)$, then Guivarc'h--LeJan \cite{Guivarch-winding} proved that a measure with finite support produces a singular measure on the boundary. This fact was proven by different techniques also in \cite{BHM} and \cite{DKN}. 
The salient feature of all these examples is that the action of $\Gamma$ on the hyperbolic plane is not cocompact and the quotient manifold has a cusp. 

In \cite{gadre_maher_tiozzo_15}, singularity of the harmonic measure for non-uniform lattices in $\Isom(\mathbb{H}^2)$ is proven using the notion of \emph{excursion} in the cusp. This approach is also applied to Teichm\"uller space in~\cite{gadre_maher_tiozzo_17}.
The main idea is that \emph{generic geodesics with respect to the harmonic measure wander less deeply into the cusp than generic geodesics with respect to the Lebesgue measure}. 

To make this idea precise, \cite{gadre_maher_tiozzo_15} compare three metrics on $\Gamma$: the \emph{word metric} $\Vert \cdot \Vert$ given by fixing a generating set, the \emph{hyperbolic metric} $d$ inherited by the constant curvature metric on $\mathbb{H}^N$, and the \emph{relative metric} $\drel$, which is obtained by collapsing the horoballs to sets of finite diameter. 
In particular, the \emph{word length ratio} is there defined as the ratio between word length and time (which is the same as hyperbolic length) along generic geodesics. 

If $N \geq 3$, though, the parabolic subgroup that stabilizes the cusp has rank greater than $1$, hence the word length ratio is almost surely finite for both the harmonic and Lebesgue measure (cf.~the remark after the proof of \autoref{cor:excursion_Lebesgue_case_superlinear}).
In this paper, we introduce a related notion of excursion to apply the described method to higher-dimensional hyperbolic manifolds with cusps. 

Let $\Gamma < \Isom(\mathbb{H}^N)$ be a discrete group of isometries of hyperbolic $N$--space such that the quotient  
$M = \mathbb{H}^N/\Gamma$ has finite volume and is not compact. Note that the quotient $M$ does not have to be a manifold, it can be an orbifold.
We consider a $\Gamma$--invariant, disjoint collection of horoballs~$\mathcal{H}$ in $\mathbb{H}^N$ which projects to the cusps of $M$.
In order to quantify how deeply geodesics wander into the cusp, let us define our notion of \emph{excursion}.
Given a geodesic ray $\gamma$ and a horoball $H$, that intersects $\gamma$ in $\gamma(t_1)$ and $\gamma(t_2)$, we define the excursion of $\gamma$ in $H$ as $E(\gamma, H) = d_{\partial H}(\gamma(t_1), \gamma(t_2))$, where $d_{\partial H}$ is the distance along the boundary of the horoball (see \autoref{fig:excursion_single_horoball}).

We are interested in the sum of all the excursions along the geodesic up to a given time.
Thus, for any time $t > 0$, let us denote as $\mathcal{H}_t$ the set of horoballs which intersect the geodesic segment 
$[\gamma(0), \gamma(t)]$.
Then for any $k\geq 1$, we define the \emph{\kth excursion} as
$$\mathcal{E}^{(k)}(\gamma, t) \coloneqq \sum_{ H \in \mathcal{H}_t } \ E(\gamma, H)^k$$
and the \emph{average \kth excursion} as the limit
$$\rho^{(k)}(\gamma) \coloneqq \lim_{t \to \infty} \frac{\mathcal{E}^{(k)}(\gamma, t)}{t}.$$

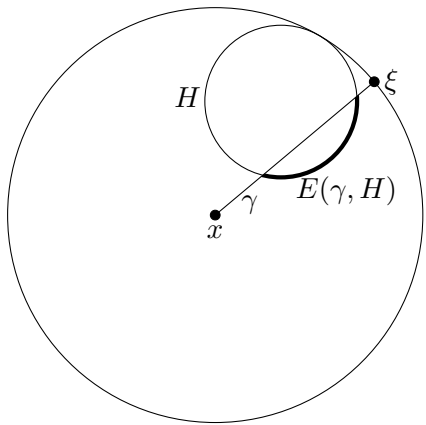
\begin{figure}
 \centering
 \begin{tikzpicture}[scale=0.92]
  \draw[fill] (0,0) node[below]{$x$} circle (2pt);
  \draw (0,0) circle (3cm);
  \draw (0,0) -- (40:3cm);
  \draw[fill] (40:3cm) node[right]{$\xi$} circle (2pt);
  \draw (60:1.9cm) circle (1.1cm);
  \path (-0.4,1.7) node{$H$};
  \path (0.5,0.15) node{$\gamma$};
  \begin{scope}
   \path[clip] (0,0) -- (40:3cm) -- (3,0) -- (0,0);
   \draw[ultra thick] (60:1.9cm) circle (1.1cm);
  \end{scope}
  \path (1.9,0.35) node{$E(\gamma,H)$};
 \end{tikzpicture}
 \label{fig:excursion_single_horoball}
 \caption{The excursion $E(\gamma, H)$ of the geodesic segment $\gamma$ in the horoball $H$ is the length of the thickly drawn arc of the horoball.}
\end{figure}

In our first theorem, we determine the average \kth excursion for generic geodesics.
The choice of a base point $x\in \mathbb{H}^N$ induces a bijection between $\partial \mathbb{H}^N$ and the set of geodesic rays starting at $x$.
Thus, a measure on $\partial \mathbb{H}^N$ can be seen as a measure on the set of geodesic rays and we can speak of generic geodesics with respect to a measure on $\partial \mathbb{H}^N$.

It turns out that average \kth excursions  for geodesics which are generic for the hitting measure 
are different from average \kth excursions for geodesics which are generic for the Lebesgue measure.

\begin{thm}[Cusp excursion] \label{thm:cusp_excursion}
 Let $N \geq 2$ and $x\in \mathbb{H}^N$ be a base point. Let $\Gamma$ be a discrete subgroup of~$\Isom(\mathbb{H}^N)$ such that $\mathbb{H}^N/\Gamma$ is a non-compact hyperbolic orbifold of finite volume.
 
 \begin{enumerate}
  \item Let $k, k' \in \mathbb{R}$ with $k' > k > 1$ and let $\mu$ be a generating measure on $\Gamma$ with finite $(k')^{th}$ moment in some word metric
  and finite exponential moment in the relative metric. Let $\nu$ be the hitting measure for the random walk driven by $\mu$.
  Then for $\nu$--almost every $\xi \in \partial \mathbb{H}^N$ and~$\gamma$ the geodesic ray from $x$ to $\xi$, the average \kth excursion $\rho^{(k)}(\gamma)$ exists and is finite.
  \item Let $\lambda$ be the Lebesgue measure on $\partial \mathbb{H}^N = S^{N-1}$. Then for $\lambda$--almost every $\xi \in \partial \mathbb{H}^N$ and $\gamma$ the geodesic ray from $x$ to $\xi$, the average \nminusoneth excursion $\rho^{(N-1)}(\gamma)$ is infinite.
 \end{enumerate}
\end{thm}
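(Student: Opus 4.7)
The plan is two-pronged, matching the two parts of the theorem, and in both cases the strategy is to reduce the average \kth excursion, which is a function of a single deterministic geodesic, to an ergodic-theoretic quantity associated with a natural dynamical system.

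For part (i), I would work with the sample-path description of $\nu$. Since $\xi$ can be realized as $\lim_n w_n x$, the geodesic $\gamma$ from $x$ to $\xi$ fellow-travels the random walk trajectory at a sublinear distance (this follows from relative Ancona-type inequalities that are available under the finite exponential moment assumption in $\drel$). Following the strategy of \cite{gadre_maher_tiozzo_15}, I would use this fellow-traveling to match each horoball excursion $E(\gamma,H)$ with a parabolic detour of the walk into the cusp above~$H$: the key technical lemma to extract is that the length of such an excursion is comparable, up to multiplicative constants depending only on~$\Gamma$, to the word length of the parabolic element produced by the walk while inside the cusp. The finite $(k')^{th}$ moment hypothesis on $\mu$ in the word metric then furnishes a tail estimate $\mathbb{P}(E \geq L) \lesssim L^{-k'}$, so that $\mathbb{E}_\mu[E^k] < \infty$ for $k < k'$. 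The finite exponential moment in $\drel$ ensures that the number of distinct horoballs hit by time~$t$ grows linearly. Applying the $L^1$ ergodic theorem to the shift on the path space of the driving Bernoulli system yields a finite almost sure limit $\rho^{(k)}(\gamma)$.

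For part (ii), I would exploit the heavy-tailed law of cusp excursions under Lebesgue measure. The central computation is purely geometric: for a fixed horoball~$H$, the Lebesgue measure on $S^{N-1}$ of the directions at~$x$ whose geodesic enters $H$ to depth at least~$D$ is asymptotic to $c_H e^{-(N-1)D}$, which translates via $L \asymp e^{D}$ into $\lambda(E \geq L) \asymp L^{-(N-1)}$ per horoball. I would then lift to the unit tangent bundle $T^{1}M$, where the geodesic flow is ergodic with respect to the Liouville measure. For each truncation level $L$, define a non-negative observable $f_{L}$ on $T^{1}M$ that records $E(\gamma,H)^{N-1}\cdot\mathbf{1}_{E\leq L}$ at the instant the geodesic is about to enter a horoball $H$ (and vanishes otherwise, suitably normalized so that it is integrable). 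A direct integration using the tail estimate gives
\[
 \int_{T^{1}M} f_{L}\,d\mathrm{Liouv} \;\asymp\; \int_{1}^{L} s^{N-1}\cdot s^{-N}\,ds \;\asymp\; \log L.
\]
Birkhoff's theorem applied to $f_{L}$ therefore yields $\liminf_{t}\mathcal{E}^{(N-1)}(\gamma,t)/t \geq c\log L$ for $\lambda$--a.e.\ $\xi$, for every $L$; letting $L\to\infty$ gives $\rho^{(N-1)}(\gamma)=\infty$.

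The main obstacle is in part (i): the comparison between a horoball excursion of $\gamma$ and a parabolic segment of the walk. The walk and the geodesic coincide only approximately, and one has to rule out pathological scenarios in which a single large random-walk jump in the word metric does not actually push $\gamma$ deep into the corresponding cusp, or conversely in which a deep excursion of $\gamma$ is produced by several moderate walk steps. The finite exponential moment in~$\drel$ is used precisely here, to guarantee that excursions of the walk in the thick part are controlled, so that depths of cusp excursions of $\gamma$ are essentially realized by single bursts of the walk whose statistics are governed by the $(k')^{th}$-moment hypothesis. Once this matching is in place, both the existence and finiteness of $\rho^{(k)}$ in part (i) and its divergence in part (ii) follow from the ergodic-theoretic machinery outlined above.
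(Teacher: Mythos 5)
Your overall strategy is recognizably the paper's (reduce the time average to an ergodic average; in part (ii) this is exactly the route taken, via an observable on $T^1M$ whose Liouville integral diverges at $k=N-1$), but there are two genuine gaps. First, in part (i), the quantity $\mathcal{E}^{(k)}(\gamma,t)$ is \emph{not} an additive cocycle over the shift on the step space: a single deep excursion of $\gamma$ is produced by the cumulative effect of unboundedly many increments, so a per-excursion moment bound $\mathbb{E}[E^k]<\infty$ followed by ``the $L^1$ ergodic theorem applied to the shift'' does not yield the limit. You flag the matching of excursions to single ``parabolic bursts'' as the main obstacle but do not resolve it, and as stated it is false in general. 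The paper avoids this entirely: it works with the two-sided walk, proves the excursion $X^{(k)}_n$ is \emph{subadditive} under the shift, and applies Kingman's theorem; the required $L^1$ bound comes not from a tail estimate on individual excursions but from the deterministic inequality $X^{(k)}_n\le C\Vert w_n\Vert^k+C$ (valid when $p_0,p_n$ are not in the same horoball) combined with a return time $\tau$ to the thick part whose tail decays exponentially --- this is the only place the exponential moment in $\drel$ enters (via the exponential drift and deviation estimates for the relative metric), not to count horoballs. Also, sublinear tracking is Kaimanovich's theorem and needs no Ancona-type inequalities.

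Second, in both parts you prove statements for the ``dynamically natural'' geodesic --- the bi-infinite geodesic of the two-sided walk in (i), a Liouville-generic flow line in (ii) --- whereas the theorem concerns the ray from the \emph{fixed} base point $x$. For fixed $x$ the fiber $T^1_xM$ is Liouville-null, so Birkhoff genericity of $\lambda$-a.e.\ direction at $x$ does not follow from ergodicity alone; one must show that the average \kth excursion depends only on the forward endpoint. This is the paper's comparison proposition for asymptotic geodesics, whose proof requires the horoball geometry lemmas and, crucially, the a priori finiteness of $\limsup_t\mathcal{E}^{(k')}(\gamma,t)/t$ for some $k'\in(k-1,k)$ --- which is exactly why the paper also establishes finiteness of $\int_{T^1M}f_{k'}\,d\widetilde\lambda$ for $k'<N-1$ before concluding part (ii). Your proposal omits this transfer step entirely, and without it neither conclusion about the ray from $x$ follows. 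Your truncation computation $\int_1^L s^{N-1}s^{-N}\,ds\asymp\log L$ is consistent with the paper's Lemma on $\int f_k$, so part (ii) would go through once the cross-section observable is made rigorous and the base-point transfer is supplied.
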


The first part of this theorem will be established in \autoref{sec:excursion_random_walks} and the second part in \autoref{sec:excursion_Lebesgue_case}.
With this theorem, we can directly deduce that the two measures on the boundary of $\mathbb{H}^N$ are mutually singular.

\begin{thm}[Hitting measure and Lebesgue measure are singular] \label{thm:measures_singular}
 Let $N \geq 3$ and $\Gamma$ be a discrete subgroup of~$\Isom(\mathbb{H}^N)$ such that $\mathbb{H}^N/\Gamma$ is a non-compact hyperbolic orbifold of finite volume.
 Let $k \in \mathbb{R}$ with $k > N-1$ and let $\mu$ be a generating measure on $\Gamma$ with finite \kth moment in some word metric, and 
 finite exponential moment in the relative metric.
 Then the hitting measure and the Lebesgue measure on $\partial \mathbb{H}^N$ are mutually singular.
 
\begin{proof}
 By \autoref{thm:cusp_excursion}, the set of boundary points of $\mathbb{H}^N$ such that the average \nminusoneth excursion is finite has zero Lebesgue measure and full hitting measure.
\end{proof}
\end{thm}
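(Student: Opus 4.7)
The plan is to exhibit a single Borel subset of $\partial\mathbb{H}^N$ that carries all of the hitting measure while being Lebesgue-null; \autoref{thm:cusp_excursion} will do essentially all of the work, and the argument reduces to matching its two halves on a common exponent.

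I would fix the base point $x$ appearing in \autoref{thm:cusp_excursion} and, for each $\xi \in \partial\mathbb{H}^N$, let $\gamma_\xi$ denote the geodesic ray from $x$ to $\xi$. The natural candidate for the separating set is
\[
 A \coloneqq \bigl\{\xi \in \partial\mathbb{H}^N : \rho^{(N-1)}(\gamma_\xi) \text{ exists and is finite}\bigr\},
\]
which is Borel because, for each fixed $t$, the quantity $\mathcal{E}^{(N-1)}(\gamma_\xi, t)$ depends measurably on $\xi$, and $A$ is then obtained from these by standard liminf/limsup operations.

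To show that $\nu(A) = 1$, I would apply part~(i) of \autoref{thm:cusp_excursion} with the exponent $N-1$ playing the role of ``$k$'' and the given exponent $k > N-1$ playing the role of ``$k'$''. This is legitimate because the hypothesis $N \geq 3$ ensures $N-1 \geq 2 > 1$, and the moment assumptions of the present theorem (finite $k^{\text{th}}$ word-metric moment and finite exponential moment in the relative metric) are precisely those demanded in part~(i). The conclusion is that $\rho^{(N-1)}(\gamma_\xi)$ is finite for $\nu$-almost every $\xi$, so $\nu(A) = 1$. The Lebesgue side is immediate from part~(ii): it asserts $\rho^{(N-1)}(\gamma_\xi) = \infty$ for $\lambda$-almost every $\xi$, hence $\lambda(A) = 0$.

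Since $\nu(A) = 1$ and $\lambda(A) = 0$, the measures $\nu$ and $\lambda$ are mutually singular, as desired. There is no genuine obstacle here beyond \autoref{thm:cusp_excursion} itself; the only book-keeping points to verify are that the exponent $N-1$ meets the constraint ``$>1$'' of part~(i) (which is exactly why the dimension hypothesis strengthens from $N\geq 2$ in \autoref{thm:cusp_excursion} to $N\geq 3$ here), and that the given moment condition on $\mu$ matches the assumption of part~(i) once one sets $k' = k$ and chooses $N-1$ as the smaller exponent.
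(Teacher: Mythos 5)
Your proposal is correct and follows exactly the paper's (one-line) argument: apply part~(i) of \autoref{thm:cusp_excursion} with exponents $N-1$ and the given $k>N-1$ to get $\nu$-a.e.\ finiteness of $\rho^{(N-1)}$, and part~(ii) for $\lambda$-a.e.\ infiniteness, so the set where $\rho^{(N-1)}$ is finite separates the two measures. Your added remarks on measurability of the separating set and on why $N\geq 3$ is needed (so that $N-1>1$) are accurate book-keeping that the paper leaves implicit.
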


This theorem generalizes the case $N=2$ from \cite{gadre_maher_tiozzo_15}. Our $\rho^{(1)}$ plays the role of the word length ratio; however, in the higher dimensional case, the word length ratio has finite first moment with respect to the Liouville measure and cannot be used to establish the singularity of measures. For this reason, we introduce the \kth excursion $\rho^{(k)}$ for $k > 1$. 

From a geometric point of view, the $(N-1)$ in the statement of \autoref{thm:measures_singular} is exactly the Hausdorff dimension of the limit set, which in this case, since $\Gamma$ has finite covolume, equals $\partial \mathbb{H}^N = S^{N-1}$. In fact, we conjecture a similar statement should hold more generally for geometrically finite manifolds by replacing $(N-1)$ with the dimension $\delta$ of the limit set $\Lambda(\Gamma)$. 

Let us also note that for $k > 1$ the excursion $\mathcal{E}^{(k)}$ is not quasi isometric to a metric and it does not satisfy the triangle inequality.
There is also a technical difference between our definition and the definition from \cite{gadre_maher_tiozzo_15}, even for the case $k=1$, since there, the pieces of the geodesic inside the thick part are also added to the excursion.

Statistical properties of the excursion in the cusp have been studied since Sullivan \cite{Sullivan}; let us note, however, that our definition is different. 
In fact, while Sullivan considers the largest distance from the thick part, we take an average over all horoballs traversed by the geodesic. 
The averaging procedure makes this statistical quantity more robust and easier to compare with the random walk~excursion.

Another approach to the singularity of measures is based on the Green metric and the fundamental inequality, as in \cite{BHM} for $N = 2$. 
One of the issues in generalizing this proof is that the group $\Gamma$ is not necessarily hyperbolic for $N \geq 3$, in contrast to the case $N = 2$, so one cannot apply the classical Ancona inequalities. 
However, under the stronger assumption that $\mu$ has finite superexponential moment in the word metric, a generalization of the Ancona inequalities to relatively hyperbolic groups has been proven in \cite{GGPY}, and singularity of harmonic measure has been established in 
 \cite{GGPY} with respect to the Patterson-Sullivan measure and in \cite{GT} with respect to any Gibbs measure (including the Lebesgue measure in variable negative curvature).

We remark that the exponential moment condition on the relative metric in Theorems~\ref{thm:cusp_excursion} and \ref{thm:measures_singular} 
is only used to prove \autoref{L:tau-decay}. It ensures that we can apply the exponential decay estimates from \cite{Sunderland} and \cite{BMSS}.
After this work was completed, we learned from S.~Gou\"ezel that exponential decay estimates hold without any moment condition \cite{Gouezel-decay}; this suggests that the exponential moment condition may be removed from Theorems~\ref{thm:cusp_excursion} and \ref{thm:measures_singular}.

\section{Random walks and hyperbolic geometry}

In this section, we provide some background material on hyperbolic geometry and on random walks. 
In particular, we prove some of the ingredients for the proofs in \autoref{sec:excursion_random_walks}. 

\subsection{Excursion in horoballs}

We first study the excursion of a geodesic segment in a single given horoball. To define the excursion of a geodesic ray or bi-infinite geodesic in all horoballs in \autoref{sec:excursion_random_walks}, we will take into account the excursions in all horoballs that intersect the geodesic.

\pagebreak

\begin{definition}[Excursion in a single horoball]
 Let $H$ be a horoball in $\mathbb{H}^N$ and $\gamma$ be a geodesic segment whose start and end points are not contained in $H$.
 If $\gamma$ and $H$ are not disjoint, then there exists an entry time $t_1$ and an exit time $t_2$ with $t_1 \leq t_2$ such that $\gamma \cap H = \gamma([t_1,t_2])$. Then we define the \emph{excursion of $\gamma$ in $H$} as $E(\gamma, H) = d_{\partial H} (\gamma(t_1), \gamma(t_2))$ where $d_{\partial H}$ is the path metric on $\partial H$ induced by the hyperbolic metric.
  If $\gamma$ and $H$ are disjoint, we set the excursion $E(\gamma, H)$ to be $0$.
\end{definition}

We will now use the geometry of $\mathbb{H}^N$ to prove some lemmas that are used in \autoref{sec:excursion_random_walks}.
In particular, we compute bounds on the excursion in the horoball by considering the intersection with a hyperbolic plane.
Note that the intersection of a horoball with a plane is a Euclidean circle which does not necessarily contain the boundary point of the horoball.

In our proofs it is crucial that the hyperbolic $N$--space is hyperbolic, that is, that there exists a $\delta > 0$ such that for every triangle 
with sides $a, b, c$, the side $c$ is contained in the $\delta$--neighborhood of $a \cup b$.
This $\delta$ is now fixed for the remainder of this article.

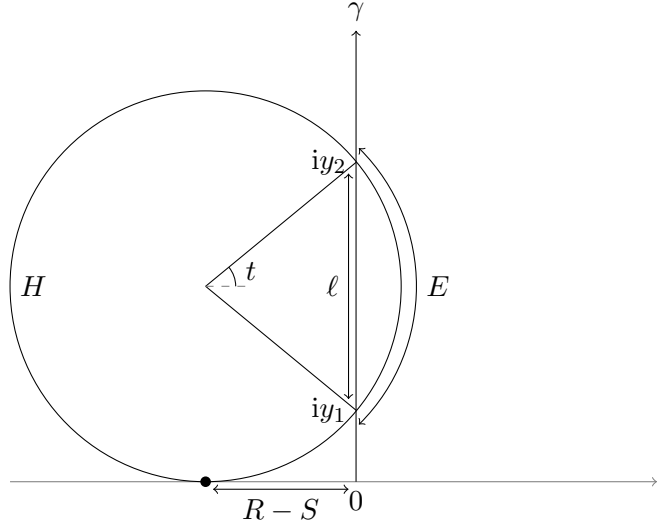
\begin{figure}
 \begin{center}
  \begin{tikzpicture}[scale=2]
   \draw[->, gray] (-2.3,0) -- (2,0) node[right] {};
   \draw[->] (0,0) node[below] {$0$} -- (0,3) node[above] {$\gamma$};
   
   \fill (-1,0) circle (1pt);
   \draw (-1,1.3) circle (1.3);
   \draw (-2.3,1.3) node[right] {$H$};
   \draw[<->] (-0.95,-0.05) -- (-0.05,-0.05);
   \draw (-0.5,-0.05) node[below]{$R-S$};
   
   \draw[<->] (-0.9,1.3) ++(45:1.3) arc (45:-45:1.3);
   \draw (0.4,1.3) node[right] {$E$};
   
   \draw (-1,1.3) -- ++(39.5:1.3);
   \draw (-1,1.3) -- ++(-39.5:1.3);
   \draw (-1,1.3) ++(39.5:0.2) arc (39.5:0:0.2);
   \draw[gray, dashed] (-1,1.3) -- (-0.7,1.3);
   \draw (-0.8,1.4) node[right] {$t$};
   
   \draw (-1,1.3) ++(-39.5:1.3) node[left] {$\mathrm{i} y_1$};
   \draw (-1,1.3) ++(39.5:1.3) node[left] {$\mathrm{i} y_2$};
   
   \draw[<->] (-0.05,0.55) -- (-0.05,2.05);
   \draw (-0.05,1.3) node[left] {$\ell$};
  \end{tikzpicture}
  \caption{In the case $N=2$, the excursion can be calculated as in \autoref{L:comp}.}
  \label{fig:excursion_plane}
 \end{center}
\end{figure}

Our first lemma gives an explicit computation for the excursion in one horoball in the case~$N= 2$. We also get bounds on the excursion when considering another geodesic that does or does not intersect the horoball.

\begin{lem} \label{L:comp}
 Consider the hyperbolic plane $\mathbb{H}^2 = \{ x + \mathrm{i} y, y > 0\}$. Let $\gamma \coloneqq \{x = 0\}$ be a geodesic and let $H$ be a horoball with boundary point to the left of $\gamma$ and which intersects $\gamma$ in $\mathrm{i} y_1$ and $\mathrm{i} y_2$ with $1 \leq |y_1| < |y_2|$ (compare to \autoref{fig:excursion_plane}). 
 
 Then if $\ell$ is the hyperbolic distance between $\mathrm{i} y_1$ and $\mathrm{i} y_2$, the excursion $E(\gamma,H)$ of $\gamma$ in $H$ 
 can be calculated as
 \begin{equation}  
  E(\gamma, H) = 2 \sinh \left( \frac{\ell}{2} \right)
  .
 \end{equation}
 
  Moreover, consider the geodesic $\gamma' \coloneqq \{ x = \delta \}$. 
 Then we have 
 \begin{equation} \label{E:compute}
  \ell \leq 2 \log \left( 1 + \sqrt{\frac{2 \delta}{y_1}} \right) 
 \end{equation}
 if and only if $H$ does not intersect $\gamma'$.
 \end{lem}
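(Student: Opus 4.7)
The plan is to set up Euclidean coordinates explicitly and then do two short computations. Since the base point of $H$ lies on the real axis to the left of $\gamma$, the horoball $H$ is a Euclidean disk tangent to $\mathbb{R}$ at some point $-a$ with $a>0$, and therefore has Euclidean center $(-a,r)$ and Euclidean radius $r$. Substituting $x = 0$ into the equation of $\partial H$ immediately yields the two relations
\[
y_1 + y_2 = 2r, \qquad y_1 y_2 = a^2,
\]
which will drive everything that follows.

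For the identity $E(\gamma,H) = 2\sinh(\ell/2)$, I would parameterize the horocircle $\partial H$ by $\theta \mapsto (-a + r\sin\theta,\, r - r\cos\theta)$, so that the hyperbolic line element along $\partial H$ becomes $r\, d\theta / (r - r\cos\theta) = \tfrac{1}{2}\csc^2(\theta/2)\, d\theta$ with antiderivative $-\cot(\theta/2)$. The two intersection angles with $\gamma$ satisfy $\sin\theta_i = a/r$, hence $\theta_2 = \pi - \theta_1$, and a direct simplification using $\cot u - \tan u = 2\cot(2u)$ gives $E(\gamma,H) = 2\cot\theta_1 = 2\sqrt{r^2-a^2}/a$. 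On the other hand, since $\ell = \log(y_2/y_1)$, one has $\sinh(\ell/2) = (y_2 - y_1)/(2\sqrt{y_1 y_2}) = \sqrt{r^2-a^2}/a$, and the two expressions match.

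For the equivalence, the essential observation is Euclidean: the vertical line $\gamma' = \{x = \delta\}$ is disjoint from the closed disk $H$ precisely when the horizontal distance $a + \delta$ from its center to $\gamma'$ exceeds $r$, i.e. when $a + \delta \geq r$ (with equality corresponding to tangency, which matches the non-strict $\leq$ in the lemma). Substituting $r = (y_1+y_2)/2$ and $a^2 = y_1 y_2$ and simplifying turns this into
\[
(a - y_1)^2 \leq 2\delta\, y_1,
\]
and since $a > y_1$ (because $y_1 < y_2$ forces $a = \sqrt{y_1 y_2} > y_1$), I can take square roots to get $a \leq y_1 + \sqrt{2\delta y_1}$. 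Dividing by $y_1$ and using $e^{\ell/2} = a/y_1$ (which follows from $y_2 = a^2/y_1$) converts this exactly into the stated inequality $\ell \leq 2\log(1 + \sqrt{2\delta/y_1})$.

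The whole argument is elementary algebra once the coordinates are fixed; the only mild point of care is the bookkeeping between strict and non-strict inequalities when $\gamma'$ is tangent to $\partial H$, but this is consistent with treating $H$ as a closed horoball throughout the excerpt.
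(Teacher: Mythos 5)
Your proof is correct and follows essentially the same route as the paper: an explicit Euclidean parameterization of the horocircle to compute the arc length of the excursion, combined with $\ell = \log(y_2/y_1)$ along the imaginary axis. The only real difference is in the second part, where you turn the disjointness of $H$ and $\gamma'$ directly into the algebraic inequality $(a-y_1)^2 \leq 2\delta y_1$ via the relations $y_1+y_2 = 2r$, $y_1 y_2 = a^2$, whereas the paper computes the tangency case and then invokes monotonicity of $\ell$ in the position of the horoball; your version makes the ``if and only if'' (including the tangency borderline, which you correctly flag) slightly more transparent.
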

 
\begin{proof}
 Let $H$ be a horoball as in \autoref{fig:excursion_plane} where $\mathrm{i} y_1$ and $\mathrm{i} y_2$ are the entry and the exit point of the geodesic $\gamma \coloneqq \{x = 0\}$. 
 Furthermore, let $t$ be half the angle between $\mathrm{i} y_1$ and $\mathrm{i} y_2$, let $R$ be the Euclidean radius of the horoball $H$, and let $S>0$ be such that $(-(R-S),0)$ is the boundary point of the horoball.
 Then we can parameterize the arc of the horoball as
 $$\left\{\begin{array}{l} x = (S-R) + R \cos \theta \\ y = R + R \sin \theta \end{array} \right.$$
 for $-t \leq \theta \leq t$.
 By definition, the excursion is the hyperbolic length of the arc of the horoball.
 We can calculate this length directly with the change of variable of $u = \tan(\theta/2)$ and $\sin \theta = \frac{2u}{1+u^2}$ and~$d\theta = \frac{2}{1+u^2}\, du$:
 $$ E(\gamma, H) = \int_{-t}^t \frac{ds}{y} = \int_{-t}^t \frac{d \theta}{1 + \sin \theta} = 2 \tan t.$$
 Now, the hyperbolic distance between the entry and exit point is 
 $$\ell = \int_{y_1}^{y_2} \frac{dy}{y} = \log \left( \frac{y_2}{y_1} \right) = \log \left( \frac{ 1 + \sin t }{1 - \sin t} \right),$$
 hence
 $$\sin t = \frac{e^\ell  - 1}{e^\ell + 1} \qquad \text{and} \qquad \cos t = \frac{2 e^{\sfrac{\ell}{2}}}{e^\ell + 1},$$
 so 
 $$ E(\gamma,H) = 2 \tan t =  
 e^{\sfrac{\ell}{2}} - e^{- \sfrac{\ell}{2}} = 2 \sinh \left( \frac{\ell}{2} \right).$$
 
 Now let $\gamma' \coloneqq \{ x = \delta \}$ be another geodesic.
 We show that for $S = \delta$, we have $\ell = 2 \log \left( 1 + \sqrt{\frac{2 \delta}{y_1}} \right)$.
 In the situation $S = \delta$, the horoball is tangent to $\gamma'$ and from
 $$y_1 = R(1- \sin t), \qquad \delta = R(1 - \cos t)$$
 it follows
 $$\frac{y_1}{\delta} = \frac{1 - \sin t}{1 - \cos t} = \frac{2}{\left( e^{\sfrac{\ell}{2}} - 1 \right)^2}$$ 
 which, by inverting the formula, yields the claimed equality.
 We obtain the equivalence to the inequality~\eqref{E:compute} then from the fact that for fixed $y_1$, the hyperbolic distance $\ell$ is smaller for~$S < \delta$ than in the case of tangency.
\end{proof}

The following lemma is a generalization of \autoref{L:comp} to higher dimensions. Note that the two geodesics $\gamma$ and $\gamma'$ define a totally geodesic copy of $\mathbb{H}^2$ inside the hyperbolic $N$--space~$\mathbb{H}^N$.
We identify this copy of $\mathbb{H}^2$ with the upper half-plane, and denote its coordinates as $z = x + \mathrm{i} y$, with~$y > 0$.
However, the boundary point of the horoball $H$ does not have to be contained in this plane and hence the intersection of $H$ with $\mathbb{H}^2$ is not necessarily a horoball but can be a disk. By using a suitable rotation of the horoball, we can show that the \kth power of the excursion of $\gamma$ in~$H$ is still bounded as written below.

\begin{lem} \label{lem:excursion_comparison_non-intersecting_geodesics}
 Consider the hyperbolic plane $\mathbb{H}^2$ inside the hyperbolic $N$--space $\mathbb{H}^N$ with coordinates~$x, y$.
 Let $H$ be a horoball 
 which intersects $\gamma \coloneqq \{x = 0\}$ in $\mathrm{i}y_1$ and $\mathrm{i}y_2$ with $1 \leq y_1 < y_2$ but does not intersect $\gamma' \coloneqq \{ x= \delta\}$.

 Then for any $k \in \mathbb{R}$, $k > 1$
 there exists a constant $c> 0$ which depends only on $\delta$ and $k$ such that the excursion satisfies
 \begin{equation*}
  E(\gamma, H)^k \leq c \int_{y_1}^{y_2} \frac{dy}{y^{\frac{k+1}{2}}}
  .
 \end{equation*}
\end{lem}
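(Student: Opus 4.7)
The plan is to reduce the estimate to \autoref{L:comp} by comparing $H$ to a two-dimensional horoball with the same entry and exit points on $\gamma$. I would proceed in three steps.

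First, I would observe that $E(\gamma, H)$ has a dimension-free closed form. On any horosphere in $\mathbb{H}^N$ the intrinsic metric is related to the ambient metric by $d_{\partial H}(P, Q) = 2 \sinh\bigl(d_H(P, Q)/2\bigr)$, so with $\ell := d_H(\mathrm{i} y_1, \mathrm{i} y_2) = \log(y_2/y_1)$ one obtains $E(\gamma, H) = 2 \sinh(\ell/2)$, matching the formula from \autoref{L:comp}.

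Next, in the upper half-space model I would write $H$ as the Euclidean ball of center $(p_1, \dots, p_{N-1}, R)$ and radius $R$, and set $r^2 = p_1^2 + \dots + p_{N-1}^2$. Intersection with $\gamma$ forces $R = (y_1 + y_2)/2$ and $r = \sqrt{y_1 y_2}$, while non-intersection of $H$ with $\gamma'$ translates to $(\delta - p_1)^2 + p_2^2 + \dots + p_{N-1}^2 > R^2$, i.e.\ $\delta^2 - 2 \delta p_1 + r^2 > R^2$. Since $|p_1| \leq r$, this forces $(\delta + r)^2 > R^2$ and hence $r > R - \delta$. But the latter is precisely the two-dimensional non-intersection condition for the horoball $\tilde{H} \subset \mathbb{H}^2$ with the same $y_1, y_2$ and boundary point $(-r, 0)$; equivalently, $\tilde{H}$ is obtained from $H$ by a suitable rotation around $\gamma$ that moves the center of $H$ into the $(x_1, y)$-plane on the side opposite to $\gamma'$. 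Applying \autoref{L:comp} to $\tilde{H}$ then gives $\ell \leq 2 \log\bigl(1 + \sqrt{2 \delta / y_1}\bigr)$, equivalently $\sqrt{y_2} \leq \sqrt{y_1} + \sqrt{2 \delta}$.

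Finally, the explicit comparison. Setting $t = \sqrt{y_1/y_2} \in (0, 1)$, I would compute $E(\gamma, H)^k = \bigl((1 - t^2)/t\bigr)^k$ and $\int_{y_1}^{y_2} y^{-(k+1)/2}\, dy = \frac{2}{k-1}\, y_1^{(1-k)/2}(1 - t^{k-1})$. The bound from the second step reads $y_1 \leq 2 \delta\, t^2 / (1 - t)^2$, which together with the hypothesis $y_1 \geq 1$ forces $t \geq (1 + \sqrt{2 \delta})^{-1}$. The claim then reduces to the boundedness of an explicit continuous function of $t$ on the compact interval $[(1 + \sqrt{2 \delta})^{-1}, 1]$; a short Taylor expansion at $t = 1$ shows that this function has finite limit at the endpoint, so its supremum (together with the constant $(2\delta)^{(k-1)/2}$) furnishes the constant $c = c(\delta, k)$. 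The principal obstacle is the second step: it is not immediate that the additional freedom for the center of $H$ in directions perpendicular to the $(x_1, y)$-plane cannot weaken the constraint on $\ell$. The Euclidean computation above is what makes this transparent, showing that the worst case is always the in-plane one and hence that the reduction to \autoref{L:comp} is valid.
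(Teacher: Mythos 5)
Your proposal is correct and follows essentially the same route as the paper: reduce to the in-plane horoball with the same entry and exit points (the paper does this by rotating $H$ around $\gamma$ and noting the rotated boundary point is at least as far from $\gamma'$; your Euclidean computation with $R=(y_1+y_2)/2$, $r=\sqrt{y_1y_2}$, $|p_1|\leq r$ is an explicit verification of that claim), then apply \autoref{L:comp} and bound the resulting ratio. The only cosmetic difference is in the last step, where the paper Taylor-expands in $\ell\to 0$ and separately notes $\ell$ is bounded, while you substitute $t=e^{-\ell/2}$ and invoke boundedness of a continuous function on a compact interval --- both are fine.
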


\begin{proof}
 First note that the horoball $H$ does not necessarily have its boundary point in the same plane as the geodesics $\gamma$ and $\gamma'$. 
 However, one can rotate the horoball $H$ around $\gamma$ to get another horoball~$H'$ which still intersects $\gamma$ in $p_1 \coloneqq \mathrm{i}y_1$ and $p_2 \coloneqq \mathrm{i} y_2$ and moreover its boundary point lies in the boundary of the plane $\mathbb{H}^2$ (and to the left of $\gamma$). 
 Note that the excursions of $\gamma$ in $H$ and in~$H'$ are the same.
 As the boundary point of $H'$ has larger or equal distance to $\gamma'$ than the boundary point of $H$, the horoball $H'$ does not intersect $\gamma'$.
 
 Let $\ell$ be again the hyperbolic distance $d(p_1, p_2) = \log \left(\frac{y_2}{y_1} \right) $.
 We now give an upper bound for $E(\gamma, H)^k$ and a lower bound for $\int_{y_1}^{y_2} \frac{dy}{y^{\frac{k+1}{2}}}$, both in terms of $\ell$.
 In both cases, we use the Taylor expansion for the terms in $\ell$. On one hand, we have from \autoref{L:comp}
 \begin{equation*}
  E(\gamma, H) = 2 \sinh \left( \frac{\ell}{2} \right) = \ell + O(\ell^3)
 \end{equation*}
 for $\ell \to 0$.
 For the other bound, we start with the calculation
 \begin{equation*}
  \int_{y_1}^{y_2} \frac{dy}{y^{\frac{k+1}{2}}} = \frac{2}{k-1} \left( y_1^{-\frac{k-1}{2}} - y_2^{-\frac{k-1}{2}} \right)
  = \frac{2}{k-1} y_1^{-\frac{k-1}{2}} \left( 1 - e^{- \frac{\ell(k-1)}{2} } \right)
  .
 \end{equation*}
 Moreover, since the horoball $H'$ does not intersect $\gamma'$, we can use \autoref{L:comp} and $\log(1+x) \leq x$ to deduce
 \begin{equation} \label{E:l-bound}
  \ell \leq 2 \log \left( 1 + \sqrt{ \frac{2\delta}{y_1} } \right) \leq  \frac{2 \sqrt{2 \delta}}{\sqrt{y_1}}
  .
 \end{equation}
 This implies
 \begin{equation*}
  y_1 \leq \frac{8 \delta}{\ell^2}
  .
 \end{equation*}
 With this bound on $y_1$ and the Taylor expansion $1-e^{-x} = x + O(x^2)$, we have then
 \begin{equation*}
  \int_{y_1}^{y_2} \frac{dy}{y^{\frac{k+1}{2}}} \geq \frac{2}{k-1} \frac{\ell^{k-1}}{(8 \delta)^{\frac{k-1}{2}}}  \left( 1 - e^{- \frac{\ell(k-1)}{2} } \right) 
  = \frac{\ell^{k}}{(8 \delta)^{\frac{k-1}{2}}} + O(\ell^{k+1})
 \end{equation*}
 for $\ell \to 0$.
 
 \pagebreak
 
 Using both of the bounds we obtained, we get
 \begin{equation*}
  \frac{E(\gamma, H)^k}{\int_{y_1}^{y_2} y^{-\frac{k+1}{2}} \ dy } \leq (8 \delta)^{\frac{k-1}{2}} \frac{\ell^k + O(\ell^{k+2})}{\ell^k + O(\ell^{k+1})}
 \end{equation*}
 which is bounded as $\ell \to 0$. Note that $\ell$ is bounded above by equation~\eqref{E:l-bound}, as $y_1 \geq 1$.
 The bound given above is independent of $y_1$ and $y_2$ which proves the existence of the constant $c > 0$ as in the claim.
\end{proof}

Now we consider the case of two parallel geodesics $\gamma$ and $\gamma'$ in $\mathbb{H}^N$, that both intersect a horoball. Note that this horoball again need not have its boundary point in the plane containing $\gamma$ and $\gamma'$.

\begin{lem} \label{L:dist1}
 Consider the hyperbolic plane $\mathbb{H}^2$ inside the hyperbolic $N$--space $\mathbb{H}^N$ with coordinates~$x, y$.
 Let $H$ be a horoball which intersects $\gamma \coloneqq \{x = 0\}$ in $p_1 = \mathrm{i}y_1$ and $p_2 = \mathrm{i}y_2$ with $1 + \delta \leq y_1 < y_2$ and intersects $\gamma' \coloneqq \{ x= \delta\}$ in $p_1' = \delta + \mathrm{i} y_1'$ and $p_2' = \delta + \mathrm{i} y_2'$ with $1 + \delta \leq y_1' < y_2'$ (compare \autoref{fig:intersection_gamma_prime}).
 
 Furthermore, let $d = d(\mathrm{i}, p_1)$ be the distance between the point $\mathrm{i}$ and the entry point of $\gamma$ in $H$. 
 Then there exists a constant $c > 0$ which depends only on $\delta$ such that
 \begin{equation} \label{E:intersect}
  | E(\gamma, H) - E(\gamma', H) | \leq c e^{-d/2}.
 \end{equation}

\begin{proof}
 Let $\ell = d(p_1, p_2)$ be the distance along the geodesic $\gamma$, and $d = d(\mathrm{i}, p_1)$ the distance between the base point and the entry point of $\gamma$ in $H$.
 Moreover, for $i = 1, 2$ let us denote by $\alpha_i$ the arc of the horoball between $p_i$ and~$p_i'$.

\begin{figure} \label{F:dist1}
 \begin{center}
  \begin{tikzpicture}[scale=1.8]
   \draw[->, gray] (-2.3,0) -- (1,0) node[right] {};
   \draw[->] (0,0) node[below] {$0$} -- (0,3) node[above] {$\gamma$};
   \draw[->] (0.5,0) node[below] {$\delta$} -- (0.5,3) node[above] {$\gamma'$};

   \fill (-0.7,0) circle (1pt);
   \draw (-0.7,1.3) circle (1.3);
   \draw (-2,1.3) node[right] {$H$};
   
   \begin{scope}
    \clip (0,0) -- (0.5,0) -- (0.5,3) -- (0,3) -- (0,0);
    \draw[ultra thick] (-0.7,1.3) circle (1.3);
   \end{scope}
   \path (0.3,0.28) node{$\alpha_1$};
   \path (0.3,2.32) node{$\alpha_2$};
   
   \draw (0,0.25) node[left]{$p_1$};
   \draw (0,2.35) node[left]{$p_2$};
   \draw (0.5,0.8) node[right]{$p_1'$};
   \draw (0.5,1.85) node[right]{$p_2'$};
   
   \begin{scope}[xshift=3.5cm]
    \draw[->, gray] (-1,0) -- (2,0) node[right] {};
    \draw[->] (0,0) node[below] {$0$} -- (0,3) node[above] {$\gamma$};
    \draw[->] (1.5,0) node[below] {$\delta$} -- (1.5,3) node[above] {$\gamma'$};

    \draw (0.6,1.4) circle (1.2);
    \draw (-0.6,1.3) node[right] {$H$};
    
    \draw[->] (0.6,0) -- (0.6,3) node[above] {$\gamma''$};

    \begin{scope}
     \clip (0,0) -- (1.5,0) -- (1.5,3) -- (0,3) -- (0,0);
     \draw[ultra thick] (0.6,1.4) circle (1.2);
    \end{scope}
   
    \draw (0,0.3) node[left]{$p_1$};
    \draw (0,2.45) node[left]{$p_2$};
    \draw (1.5,0.6) node[right]{$p_1'$};
    \draw (1.5,2.3) node[right]{$p_2'$};
    \draw (0.6,0.4) node[right]{$p_1''$};
    \draw (0.6,2.4) node[right]{$p_2''$};
   \end{scope}
  \end{tikzpicture}
  \caption{Two situations in \autoref{L:dist1}: first with the center of $C_H$ to the left of $\gamma$ and $\gamma'$, then with the center of $C_H$ between $\gamma$ and $\gamma'$.}
  \label{fig:intersection_gamma_prime}
 \end{center}
\end{figure}
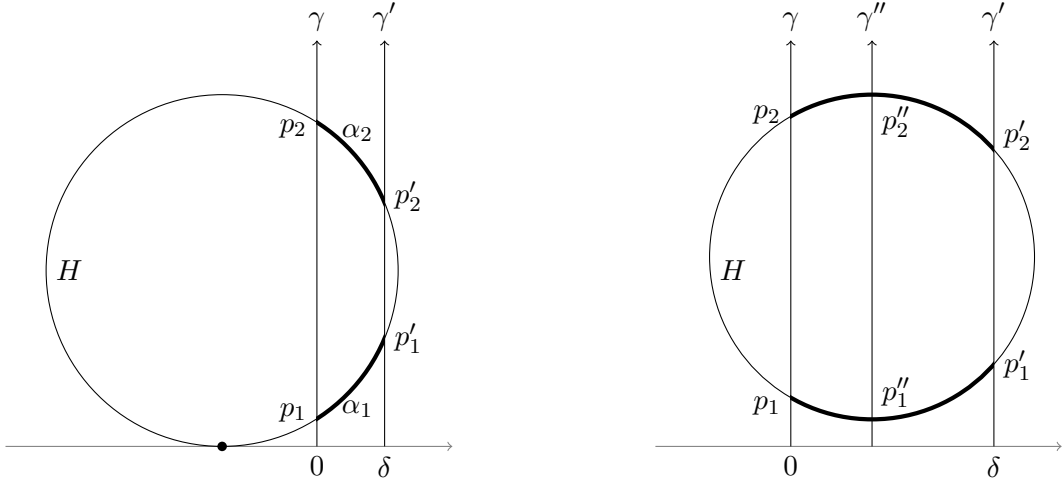

 Note that the boundary point of $H$ does not have to be contained in the same plane as $\gamma$ and $\gamma'$. Therefore, the intersection $C_H$ of $H$ with this plane might be a horoball or might be a Euclidean circle inside the hyperbolic plane.
 
 We will distinguish three cases depending on the position of the Euclidean center of $C_H$.
 We first consider the case where the center of $C_H$ is on the left of $\gamma$ and $\gamma'$.
 Second, we consider the case where the center 
 is on the right of $\gamma$ and $\gamma'$.
 The third case is when the center is between $\gamma$ and $\gamma'$.
 
 So, assume first that the center of the circle $C_H$ is on the left of $\gamma$ and $\gamma'$. Then $E(\gamma', H) < E(\gamma, H)$ and the hyperbolic length of $\alpha_1$ is bounded above by
 \begin{equation} \label{E:l-alpha1}
  \ell(\alpha_1) \leq \frac{\delta}{y_1} + \int_{y_1}^{y_1'} \frac{dy}{y}
  .
 \end{equation}
 Now, if one keeps $p_1$ fixed 
 and lets $H$ vary, the largest possible value of $y_1'$ is reached when~$H$ is tangent to $\gamma'$ and the boundary point of $H$ is most far away from $\gamma'$ which is when the boundary point of $H$ lies in the same plane as $\gamma$ and $\gamma'$. Hence, we can find a bound on the value of $y_1'$ by using \autoref{L:comp}.
 In particular, if $\gamma'$ is tangent to $H$, then by definition of $\ell$ and \autoref{L:comp}
 $$\log \left( \frac{y_2}{y_1} \right) = \ell = 2 \log \left( 1+ \sqrt{\frac{2\delta}{y_1}} \right)$$ 
 and 
 $$y_1' = y_2' = \frac{y_1 + y_2}{2},$$
 so we have 
 $$\int_{y_1}^{y_1'} \frac{dy}{y} = \log \left( \frac{y_1'}{y_1} \right) = \log \left( \frac{1 + \frac{y_2}{y_1}}{2} \right) = \log \left( \frac{1 + \left( 1 + \sqrt{\frac{2\delta}{y_1}} \right)^2 }{2} \right).$$ 
 Hence, using $y_1 = e^d$ and $\log(1+x) \leq x$, we have
 \begin{equation*}
  \log \left( \frac{y_1'}{y_1} \right)
  = \log \left( 1 + \delta e^{-d} + \sqrt{ 2\delta} e^{\sfrac{-d}{2}} \right) 
  \leq \left( \delta + \sqrt{2 \delta} \right) e^{\sfrac{-d}{2}}
  .
 \end{equation*}
 Plugging this into equation~\eqref{E:l-alpha1}, we get
 $$\ell(\alpha_1) \leq \delta e^{-d} +  \left( \delta + \sqrt{2 \delta} \right) e^{\sfrac{-d}{2}} \leq  \left( 2 \delta + \sqrt{2 \delta} \right) e^{\sfrac{-d}{2}}.$$
 Finally, let us note that, as $y_1 < y_2'$ and $|y_1 - y_1'| = |y_2 - y_2'|$,  
 $$\ell(\alpha_2) \leq \frac{\delta}{y_2'} + \int_{y_2}^{y_2'} \frac{dy}{y} \leq \frac{\delta}{y_1} + \int_{y_1}^{y_1'} \frac{dy}{y}$$
 hence we can apply the same bound as before and obtain 
 $$\ell(\alpha_1) + \ell(\alpha_2) \leq 2 \left( 2 \delta + \sqrt{2 \delta} \right) e^{\sfrac{-d}{2}}.$$
 Now, let $\beta$ be the shortest path from $p_1'$ to $p_2'$ along the boundary of $H$. By definition, the length of~$\beta$ is $E(\gamma', H)$. 
 On the other hand, by definition, the excursion $E(\gamma, H)$ is the length of the shortest path from $p_1$ to $p_2$ along the boundary of $H$.
 Note that also the union of $\alpha_1$, $\beta$, and $\alpha_2$ is a path from $p_1$ to $p_2$ along the boundary of $H$.
 Therefore, we have
 $$E(\gamma, H) \leq  E(\gamma', H) + \ell(\alpha_1) + \ell(\alpha_2) \leq E(\gamma', H) + 2 \left( 2 \delta + \sqrt{2 \delta} \right) e^{\sfrac{-d}{2}},$$
 which shows the desired bound for the case when the Euclidean center of the circle $C_H$ is on the left of~$\gamma$ and $\gamma'$.
 
 Consider now the case when the center of $C_H$ is on the right of $\gamma$ and $\gamma'$.
 From the previous case, we get by symmetry arguments that we have
 \begin{equation*}
  | E(\gamma', H) - E(\gamma, H) | \leq 2 \left( 2 \delta + \sqrt{2 \delta} \right) e^{ \sfrac{-d'}{2} }
 \end{equation*}
 where $d' \coloneqq d(\mathrm{i}, p_1')$.
 Furthermore, we have
 \begin{equation*}
  d' \geq d - \log \left( \frac{y_1'}{y_1} \right)
  \geq d - \left( \delta + \sqrt{2 \delta} \right) e^{\sfrac{-d}{2}}
  \geq d - \left( \delta + \sqrt{2 \delta} \right)
 \end{equation*}
 which implies
 \begin{equation*}
  2 \left( 2 \delta + \sqrt{2 \delta} \right) e^{ \sfrac{-d'}{2} }
  \leq 2 \left( 2 \delta + \sqrt{2 \delta} \right) e^{ \delta + \sqrt{2 \delta} } \cdot  e^{ \sfrac{-d}{2}}
  .
 \end{equation*}
 Choosing $c \coloneqq 2 \left( 2 \delta + \sqrt{2 \delta} \right) e^{\delta + \sqrt{2 \delta}}$ shows the desired bound for the second case.
 
 In the third case, the center of the circle $C_H$ is between $\gamma$ and $\gamma'$.
 Let $\gamma''$ be a geodesic in the plane defined by $\gamma$ and $\gamma'$, that contains this center and is parallel to $\gamma$ and $\gamma'$ (see \autoref{fig:intersection_gamma_prime}).
 Let $p_1''$ and~$p_2''$ be the intersection points of $H$ with $\gamma''$ and $y_1''$ and $y_2''$ analogously to before. Then we have $y_1'' \geq 1$ as $y_1 \geq 1+\delta$, $y_2 \geq 1+\delta$, and the radius of the Euclidean circle $C_H$ is at least $\frac{\delta}{2}$.
 
 Note that in the previous two cases, we have not used $y_1 \geq 1+\delta$ and $y_1' \geq 1+\delta$ but only $y_1 \geq 1$ and $y_1' \geq 1$.
 Therefore, we can apply the cases from before to compare $E(\gamma, H)$ with $E(\gamma'', H)$ and $E(\gamma', H)$ with $E(\gamma'', H)$. This gives us
 \begin{equation*}
  | E(\gamma, H) - E(\gamma', H) |
  \leq | E(\gamma, H) - E(\gamma'', H) | + | E(\gamma', H) - E(\gamma'', H) |
  \leq c e^{\sfrac{-d}{2}} + c e^{\sfrac{-d'}{2}}
  .
 \end{equation*}
 As before, we can express $d'$ by $d$ and obtain a uniform constant which depends only on $\delta$.
\end{proof}
\end{lem}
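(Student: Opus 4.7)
The natural idea is to relate the two excursion arcs on $\partial H$ by closing the loop with two short arcs connecting the entry and exit points of the two geodesics. Let $\alpha_1$ be the arc of $\partial H$ from $p_1$ to $p_1'$ and $\alpha_2$ the arc from $p_2$ to $p_2'$, both taken to lie on the side of $C_H$ facing the slab between $\gamma$ and $\gamma'$. The concatenation of $\alpha_1$, the shortest arc from $p_1'$ to $p_2'$ along $\partial H$, and $\alpha_2$ is a path on $\partial H$ from $p_1$ to $p_2$, so its length is at least $E(\gamma, H)$. This yields $E(\gamma, H) \leq E(\gamma', H) + \ell(\alpha_1) + \ell(\alpha_2)$, and the reverse inequality is analogous. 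Hence everything reduces to bounding $\ell(\alpha_1) + \ell(\alpha_2)$ by a constant multiple of $e^{-d/2}$.

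To estimate $\ell(\alpha_i)$ I would use a crude piecewise path in the upper-half-space model: go horizontally at height $y_i$ for Euclidean length $\delta$ (contributing $\delta/y_i$ to the hyperbolic length) and then vertically from height $y_i$ to height $y_i'$ (contributing $|\log(y_i'/y_i)|$). The key is to control the ratio $y_i'/y_i$, and the extremal case is exactly when $H$ is tangent to $\gamma'$: by \autoref{L:comp}, tangency gives $y_1' = (y_1 + y_2)/2$ with $\ell = \log(y_2/y_1) = 2 \log(1 + \sqrt{2\delta/y_1})$, and substituting $y_1 = e^d$ together with $\log(1+x) \leq x$ gives $|\log(y_1'/y_1)| \leq (\delta + \sqrt{2\delta})\, e^{-d/2}$. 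Combined with $\delta/y_1 \leq \delta e^{-d}$, this controls $\ell(\alpha_1)$. For $\alpha_2$, the analogous bound follows from $y_2' > y_1$ and the up-down symmetry of a chord through a Euclidean circle, which gives $|y_2 - y_2'| = |y_1 - y_1'|$ and hence $\int_{y_2'}^{y_2} dy/y \leq \int_{y_1}^{y_1'} dy/y$.

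The last ingredient is a case analysis for the position of the Euclidean center of $C_H = H \cap \mathbb{H}^2$, which need not coincide with a horoball boundary point since the latter may lie outside the plane of $\gamma, \gamma'$. The argument above works verbatim when the center lies to the left of both $\gamma$ and $\gamma'$. If the center lies to the right, the same bound applies by symmetry with $d$ replaced by $d' \coloneqq d(\mathrm{i}, p_1')$, and from $d' \geq d - |\log(y_1'/y_1)| \geq d - (\delta + \sqrt{2\delta})$ one concludes that $e^{-d'/2}$ and $e^{-d/2}$ are comparable up to a factor depending only on $\delta$. When the center lies between $\gamma$ and $\gamma'$, introduce an auxiliary geodesic $\gamma''$ in the same plane passing through the center and parallel to both; then the first two cases apply separately to compare $E(\gamma, H)$ and $E(\gamma', H)$ each with $E(\gamma'', H)$, and the triangle inequality concludes. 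The main bookkeeping obstacle is verifying in this third case that the intersection heights $y_1'', y_2''$ are $\geq 1$ so the earlier cases apply; this uses the slightly stronger hypothesis $y_1, y_1' \geq 1 + \delta$ together with a lower bound on the Euclidean radius of $C_H$.
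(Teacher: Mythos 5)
Your proposal is correct and follows essentially the same route as the paper's proof: the arc decomposition $E(\gamma,H) \leq E(\gamma',H) + \ell(\alpha_1) + \ell(\alpha_2)$, the horizontal-plus-vertical estimate $\ell(\alpha_i) \leq \delta/y_i + |\log(y_i'/y_i)|$ controlled via the tangency extremal case of \autoref{L:comp}, the chord-symmetry bound for $\alpha_2$, and the three-case analysis on the Euclidean center of $C_H$ with the auxiliary geodesic $\gamma''$ in the middle case. No substantive differences.
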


We now use \autoref{lem:excursion_comparison_non-intersecting_geodesics} to prove a statement about the sum of the \kth powers of the excursions of a geodesic ray in a collection of horoballs. This will be an ingredient in the proof of \autoref{prop:comparison_twosided_with_same_endpoint}.

\begin{lem} \label{lem_disjoint_horoballs_have_bounded_total_extension}
 Consider the hyperbolic plane $\mathbb{H}^2$ inside the hyperbolic $N$--space $\mathbb{H}^N$ with coordinates~$x, y$.
 Let $(H_i)_{i \geq 1}$ be a collection of disjoint horoballs which intersect $\gamma \coloneqq \{x = 0, y\geq 1\}$ in two points and do not intersect $\gamma' \coloneqq \{ x= \delta\}$.
 
 Then for every $k > 1$, there exists a universal constant $c > 0$, which depends only on $\delta$ and $k$, such that
 \begin{equation*}
  \sum_{i = 1}^\infty E(\gamma, H_i)^k \leq c
  .
 \end{equation*}

\begin{proof}
 We can choose the order of the horoballs such that for every $i \geq 1$ we have that $H_i$ intersects $\gamma \coloneqq \{x = 0\}$ in $p_1^{(i)} = \mathrm{i}y_1^{(i)}$ and $p_2^{(i)} = \mathrm{i}y_2^{(i)}$ and we have $1 \leq y_1^{(1)} < y_2^{(1)} \leq y_1^{(2)} < \ldots$.
 
 Then by \autoref{lem:excursion_comparison_non-intersecting_geodesics} and because the horoballs are disjoint, we have
 \begin{equation*}
  \sum_{i = 1}^\infty E(\gamma, H_i)^k
  \leq c \sum_{i = 1}^\infty \int_{y_1^{(i)}}^{y_2^{(i)}} \frac{dy}{y^{\frac{k+1}{2}}}
  \leq  c \int_1^\infty \frac{dy}{y^{\frac{k+1}{2}}} = \frac{2c}{k-1}
  < +\infty
  .
 \end{equation*}
\end{proof}
\end{lem}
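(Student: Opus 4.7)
The plan is to bootstrap directly from the previous lemma (\autoref{lem:excursion_comparison_non-intersecting_geodesics}), which gives per-horoball control of the form
\[
 E(\gamma, H_i)^k \leq c \int_{y_1^{(i)}}^{y_2^{(i)}} \frac{dy}{y^{\frac{k+1}{2}}},
\]
with $c$ depending only on $\delta$ and $k$, provided $1 \leq y_1^{(i)} < y_2^{(i)}$ and $H_i$ does not intersect $\gamma'$. Both hypotheses are exactly what we are assuming here, so the pointwise bound applies to every $H_i$.

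First, I would record that we may order the horoballs so that the entry and exit heights satisfy
\[
 1 \leq y_1^{(1)} < y_2^{(1)} \leq y_1^{(2)} < y_2^{(2)} \leq \cdots
\]
This uses the key geometric observation that if two horoballs $H_i$, $H_j$ are disjoint and each meets the geodesic $\gamma$ in a segment, then those two segments are themselves disjoint; for a common point would lie in $H_i \cap H_j$. Hence the intervals $[y_1^{(i)}, y_2^{(i)}]$ are pairwise disjoint subintervals of $[1,\infty)$.

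Second, summing the pointwise bound over $i$ and using disjointness of the intervals, the sum telescopes into a single improper integral:
\[
 \sum_{i=1}^\infty E(\gamma, H_i)^k
 \leq c \sum_{i=1}^\infty \int_{y_1^{(i)}}^{y_2^{(i)}} \frac{dy}{y^{\frac{k+1}{2}}}
 \leq c \int_1^\infty \frac{dy}{y^{\frac{k+1}{2}}}.
\]
Since $k > 1$, the exponent $\frac{k+1}{2} > 1$, so the integral converges to $\frac{2}{k-1}$, giving a uniform bound depending only on $\delta$ and $k$.

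There is really no serious obstacle; the only subtlety is the disjointness step that legitimizes passing from $\sum \int$ to a single $\int$ over $[1,\infty)$, and that follows immediately from disjointness of the horoballs. The proof is essentially a one-line application of the prior lemma combined with the convergence of $\int_1^\infty y^{-(k+1)/2}\,dy$.
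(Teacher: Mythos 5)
Your proof is correct and follows exactly the paper's argument: order the horoballs along $\gamma$, apply the per-horoball bound from \autoref{lem:excursion_comparison_non-intersecting_geodesics}, and use disjointness of the resulting intervals to dominate the sum by $c\int_1^\infty y^{-(k+1)/2}\,dy = \frac{2c}{k-1}$. Your added remark justifying why disjoint horoballs give disjoint intervals on $\gamma$ is a small elaboration the paper leaves implicit, but the route is the same.
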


We finish this section with two lemmas that will be helpful in the proof of \autoref{prop:comparison_twosided_with_same_endpoint}.

\begin{lem} \label{L:recurrence}
Let $0 < c < 2$ be a constant, and let $(d_i)$ be a sequence such that $d_0 \geq 0$, and 
$$d_{i+1} \geq d_i + c e^{- \sfrac{d_i}{2} } \qquad \text{for all }i \in \mathbb{N}.$$
Then for each $i \in \mathbb{N}$ we have 
$$d_i \geq 2 \log \left( 1+ \frac{i c}{2} \right).$$
\end{lem}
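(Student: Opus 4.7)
The natural approach is induction on $i$. The base case $i = 0$ reduces to $d_0 \geq 2\log 1 = 0$, which holds by assumption.

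For the inductive step, set $f(x) := x + c e^{-x/2}$ so that the hypothesis reads $d_{i+1} \geq f(d_i)$. I first check that $f$ is non-decreasing on $[0,\infty)$: since $f'(x) = 1 - (c/2) e^{-x/2} \geq 1 - c/2 > 0$ for $x \geq 0$ (here the assumption $c < 2$ enters). Assuming $d_i \geq 2\log(1 + ic/2)$, monotonicity of $f$ gives
\begin{equation*}
d_{i+1} \;\geq\; f(d_i) \;\geq\; f\!\left(2\log\!\left(1 + \tfrac{ic}{2}\right)\right) \;=\; 2\log\!\left(1 + \tfrac{ic}{2}\right) + \frac{c}{1 + \tfrac{ic}{2}},
\end{equation*}
using $e^{-\log(1+ic/2)} = (1+ic/2)^{-1}$.

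It therefore suffices to show
\begin{equation*}
\frac{c}{1 + \tfrac{ic}{2}} \;\geq\; 2\log\!\left(\frac{1 + \tfrac{(i+1)c}{2}}{1 + \tfrac{ic}{2}}\right) \;=\; 2\log\!\left(1 + \frac{c/2}{1 + ic/2}\right).
\end{equation*}
Setting $u := \dfrac{c/2}{1+ic/2} \geq 0$, the left-hand side equals $2u$, so the inequality reduces to the standard estimate $u \geq \log(1+u)$, valid for all $u > -1$. This closes the induction.

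The only delicate point is ensuring $f$ is monotone on the range where we apply the inductive hypothesis; this is exactly where the assumption $c < 2$ is used, and everything else reduces to the elementary inequality $\log(1+u) \leq u$. No further obstacle is expected.
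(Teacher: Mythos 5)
Your proof is correct and follows essentially the same route as the paper: induction using the monotonicity of $f(x) = x + c e^{-x/2}$ (where $c < 2$ guarantees $f' > 0$) and the elementary bound $\log(1+u) \leq u$ to close the inductive step. The only difference is that you spell out the base case explicitly, which the paper leaves implicit.
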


\begin{proof}
Consider the function $f(x) \coloneqq x + c e^{\sfrac{-x}{2} }$. Since $f'(x)= 1-\frac{c}{2} \cdot e^{\sfrac{-x}{2}}$ and $c < 2$, we have that $f(x)$ is increasing for $x \geq 0$.

Let us now prove the claim by induction. For this, suppose that $d_i \geq 2 \log\left( 1 + \frac{i c}{2}\right)$ for an $i\in \mathbb{N}$. Then 
$$d_{i + 1} \geq f(d_i) \geq f\left(2 \log \left( 1 + \frac{ic}{2} \right) \right) = 2 \log \left( 1 + \frac{ic}{2}\right) + \frac{c}{1 + \frac{ic}{2}} \geq 2 \log \left( 1 + \frac{(i+1) c}{2}\right)$$
where the last inequality follows from 
\begin{equation*}
 2 \log \left( 1 + \frac{(i+1) c}{2} \right) - 2 \log\left( 1 + \frac{ic}{2} \right)
 = 2 \log \left(\frac{1 + \frac{(i+1) c}{2}}{1 + \frac{ic}{2}}\right)
 = 2 \log \left( 1 + \frac{\frac{c}{2}}{1 + \frac{ic}{2}} \right)
 \leq \frac{c}{1 + \frac{ic}{2}}
 .
\end{equation*}
\end{proof}

\begin{lem} \label{lem:powers_k}
 Let $k \geq 1$, $k\in \mathbb{R}$ and $x, y \geq 0$. Then we have
 \begin{equation*}
  (x+y)^{k} \leq x^{k} + 2^{k-1} k\, y \left( x^{k-1} + y^{k-1} \right)
  .
 \end{equation*}
 
\begin{proof}
 Note that
 \begin{equation*}
  (x+y)^{k-1}
  \leq \left( 2\cdot \max\{x, y\} \right)^{k-1}
  = 2^{k-1} \cdot \max\{x^{k-1}, y^{k-1}\}
  \leq 2^{k-1} (x^{k-1} + y^{k-1})
  .
 \end{equation*}
 As the function $t^{k-1}$ is non-decreasing on $\mathbb{R}_{\geq 0}$, we can calculate
 \begin{equation*}
  (x+y)^k
  = x^k + \int_{x}^{x+y} k\, t^{k-1}\, dt
  \leq x^k + y \cdot k (x+y)^{k-1}
  \leq x^k + 2^{k-1} k\, y (x^{k-1} + y^{k-1})
  .
 \end{equation*}
\end{proof}
\end{lem}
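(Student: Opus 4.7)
The plan is to split $(x+y)^{k}$ as $x^{k}$ plus the integral of its derivative from $x$ to $x+y$, and then bound that integral by its largest value.

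First I would write
\[
(x+y)^{k} = x^{k} + \int_{x}^{x+y} k\, t^{k-1}\, dt.
\]
Since $k \geq 1$, the map $t \mapsto t^{k-1}$ is non-decreasing on $[0,\infty)$, so the integrand is at most $k(x+y)^{k-1}$ throughout the interval of integration, which has length $y$. This yields the intermediate bound
\[
(x+y)^{k} \leq x^{k} + k\, y\, (x+y)^{k-1}.
\]

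Next I would control the factor $(x+y)^{k-1}$ by passing through the maximum. Since $x+y \leq 2\max\{x,y\}$ and $k-1 \geq 0$, one gets
\[
(x+y)^{k-1} \leq (2\max\{x,y\})^{k-1} = 2^{k-1}\max\{x^{k-1}, y^{k-1}\} \leq 2^{k-1}(x^{k-1}+y^{k-1}).
\]
Substituting this estimate into the intermediate bound produces the claimed inequality directly.

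There is no genuine obstacle here; the lemma is an elementary inequality. The only subtle point is that the hypothesis $k \geq 1$ is used in two different places: once to make $t^{k-1}$ non-decreasing, so that the integrand is controlled by its value at the right endpoint, and once to make $u \mapsto u^{k-1}$ monotone on $[0,\infty)$, so that the maximum can be pulled inside the power. If instead one had $0 < k < 1$, the analogous inequality would still hold but with different constants and a different (in fact reversed) monotonicity argument.
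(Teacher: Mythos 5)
Your proof is correct and follows essentially the same route as the paper: bound $(x+y)^k - x^k$ by the integral of $k\,t^{k-1}$ evaluated at the right endpoint, then control $(x+y)^{k-1}$ via $x+y \leq 2\max\{x,y\}$. The only difference is the order in which the two steps are presented, which is immaterial.
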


\subsection{Background on random walks} \label{sec:background_random_walks}

Let $\Gamma < \Isom(\mathbb{H}^N)$ be a countable group of isometries of hyperbolic $N$--space.
We now recall some basic definitions in order to define a random walk in this setting.

Let $\mu$ be a measure on $\Gamma$ which is \emph{generating}, that is, $\Gamma$ is generated by the support of $\mu$ as a semi-group.
The \emph{step space} is the measure space $(\Gamma^\mathbb{N}, \mu^\mathbb{N})$, which is the space of increments for the random walk.
Its elements are denoted by $(g_n)$. 
Let us consider the map $\pi: \Gamma^\mathbb{N} \to \Gamma^\mathbb{N}$ defined by $\pi((g_n)) = (w_n)$ with 
$$w_n \coloneqq g_1 \dots g_n$$
which associates to each sequence of increments the corresponding positions of the random walk on~$\Gamma$. 
The \emph{space of sample paths} is $\Omega = (\Gamma^\mathbb{N}, \mathbb{P})$, where $\mathbb{P} = \pi_\star(\mu^\mathbb{N})$
is the pushforward of the product measure to $\Omega$. Elements of $\Omega$ are denoted by $\omega = (w_n)$.

Picking a base point $x \in \mathbb{H}^N$ defines a sequence $(w_n x)$ in $\mathbb{H}^N$ for every $(w_n) \in \Omega$ with
$$w_n x = g_1 \dots g_n x.$$
We call this process a \emph{random walk} on $\mathbb{H}^N$ driven by $\mu$.

We recall now some properties of random walks which are used in the following section.
Recall that a group of isometries of $\mathbb{H}^N$ is \emph{non-elementary} if it contains at least two loxodromic elements with disjoint fixed sets on the boundary of $\mathbb{H}^N$. 

\begin{prop}[Convergence to the boundary \cite{furstenberg}]
If the group $\Gamma$ is non-elementary, then for almost every sample path~$\omega = (w_n) \in \Omega$ and every~$x\in \mathbb{H}^N$, there exists the limit
$$\xi^+(\omega) \coloneqq \lim_{n \to \infty} w_n x \in \partial \mathbb{H}^N.$$
\end{prop}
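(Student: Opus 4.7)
The plan follows Furstenberg's original approach. The first step is to produce a $\mu$-stationary probability measure $\nu$ on the compact space $\partial\mathbb{H}^N$ by weak-$*$ compactness: take any accumulation point of the Cesàro averages $\frac{1}{n}\sum_{k=0}^{n-1} \mu^{*k}*\lambda_0$ for an arbitrary reference probability $\lambda_0$, so that $\nu = \mu*\nu = \sum_{g\in\Gamma}\mu(g)\, g_*\nu$. Non-elementarity of $\Gamma$ then forces $\nu$ to be non-atomic, since otherwise the finite set of atoms of maximal mass would form a finite $\Gamma$-invariant subset of $\partial\mathbb{H}^N$, contradicting the existence of two loxodromic elements with disjoint fixed-point sets.

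Second, for every $f\in C(\partial\mathbb{H}^N)$ the process $M_n^f \coloneqq \int f\, d(w_n)_*\nu$ is a bounded martingale with respect to the filtration $\mathcal{F}_n \coloneqq \sigma(g_1,\dots,g_n)$; indeed, using the stationarity $\nu = \mu*\nu$ one checks directly that $\mathbb{E}[M_{n+1}^f\mid\mathcal{F}_n] = M_n^f$. Doob's theorem gives almost sure convergence of each $M_n^f$, and separability of $C(\partial\mathbb{H}^N)$ upgrades this (off a single null set) to weak-$*$ convergence $(w_n)_*\nu \to \nu_\omega$ almost surely, where $\nu_\omega$ is a random probability measure on $\partial\mathbb{H}^N$.

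The main obstacle is to show that $\nu_\omega$ is almost surely a Dirac mass and that the supporting point coincides with the geometric limit of the orbit $(w_n x)$. For the first claim one exploits strong proximality of the action on $\partial\mathbb{H}^N$: for every loxodromic $h\in\Gamma$ with attracting and repelling fixed points $\eta_+,\eta_-$ and every Borel probability $\lambda$ with $\lambda(\{\eta_-\})=0$, one has $h^n_*\lambda \to \delta_{\eta_+}$ weakly. Combining this with a Borel--Cantelli argument (which uses that $\mu$ is generating and that $\Gamma$ contains two loxodromics with disjoint fixed-point pairs) shows that the sample word $(w_n)$ almost surely realizes infinitely many such contractions; together with non-atomicity of $\nu$ this forces every weak-$*$ subsequential limit of $(w_n)_*\nu$ to be a single Dirac mass $\delta_{\xi^+(\omega)}$. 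For the second claim one invokes the north-south dynamics of isometries of $\mathbb{H}^N$: if $(w_n)_*\nu \to \delta_{\xi^+}$ with $\nu$ non-atomic, then the associated contracting dynamics on $\overline{\mathbb{H}^N} = \mathbb{H}^N \cup \partial\mathbb{H}^N$ force $w_n y \to \xi^+$ in the cone topology for every $y\in\mathbb{H}^N$.

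Finally, independence of the limit from the base point $x$ is automatic, since each $w_n$ is an isometry and hence $d(w_n x, w_n x')=d(x,x')$ for all $n$, so any two orbits $w_n x$ and $w_n x'$ must converge in the cone topology to the same boundary point $\xi^+(\omega)$.
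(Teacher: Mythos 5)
This proposition is stated in the paper as a black box with a citation to Furstenberg; the paper contains no proof of it, so there is nothing internal to compare against. Your outline is the classical Furstenberg strategy (stationary measure by Krylov--Bogolyubov, non-atomicity from non-elementarity, bounded martingales $M_n^f$ and separability of $C(\partial\mathbb{H}^N)$ to get $(w_n)_*\nu\to\nu_\omega$), and those first two stages are correct as written.

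The gap is in the third stage, which is where all the substance of the theorem lives. Your Borel--Cantelli argument gives, for a fixed power $h^M$, infinitely many $n$ with $w_{n+m}=w_n h^M$, hence $(w_{n+m})_*\nu=(w_n)_*\bigl((h^M)_*\nu\bigr)$ with $(h^M)_*\nu$ concentrated near $\eta_+$. But pushing a measure that charges a small ball $B(\eta_+,r)$ forward by $w_n$ produces a measure concentrated on $w_n B(\eta_+,r)$, and an isometry acting on $\partial\mathbb{H}^N$ can expand that ball to nearly the whole sphere; so ``mass $1-\epsilon$ on a small set'' is not preserved, and you cannot conclude from this alone that the weak-$*$ limit $\nu_\omega$ is a Dirac mass. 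The standard repair is the convergence-group (north--south compactness) property of $\Isom(\mathbb{H}^N)$: any sequence of isometries leaving every compact subset of the group has a subsequence converging, locally uniformly on $\overline{\mathbb{H}^N}\setminus\{b\}$ for some $b\in\partial\mathbb{H}^N$, to a constant boundary point $a$. Applied to any unbounded subsequence of $(w_n)$ and combined with non-atomicity of $\nu$ (so $\nu(\{b\})=0$), this gives $(w_{n_k})_*\nu\to\delta_a$, and since all subsequential limits must equal $\nu_\omega$, the point $a=\xi^+(\omega)$ is well defined and $w_{n_k}y\to\xi^+(\omega)$ for every interior point $y$ simultaneously --- which also delivers your final base-point-independence claim without a separate argument. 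What this leaves you owing is transience: you must rule out that $(w_n)$ returns to a compact subset of $\Isom(\mathbb{H}^N)$ infinitely often. That is not automatic from what you wrote; it follows, e.g., because a non-elementary discrete $\Gamma$ contains a free subgroup by ping-pong, hence is non-amenable, hence the $\mu$-random walk is transient (Kesten), or by a direct argument using the stationary measure. Without the compactness lemma and the transience step, the passage from ``infinitely many contractions'' to ``$\nu_\omega=\delta_{\xi^+}$ and $w_nx\to\xi^+$'' does not close.
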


Since the random walk converges almost surely, we can define its \emph{hitting measure} as the probability measure~$\nu$ on $\partial \mathbb{H}^N$ 
such that
$$\nu(A) \coloneqq \mathbb{P}( \xi^+(\omega) \in A )$$
for any Borel set $A \subseteq \partial \mathbb{H}^N$.

The random walk is said to have \emph{finite \kth moment} in a metric $d$ on $\mathbb{H}^N$ if $\int d(x, gx)^k \ d\mu(g) < + \infty$ for some $x\in \mathbb{H}^N$, and \emph{finite exponential moment} if there exists $\alpha > 0$ for which $\int e^{\alpha d(x, gx)} \ d\mu(g) <~+\infty$. 
As an immediate application of Kingman's ergodic theorem \cite{kingman_68}, one gets:

\begin{prop}[Linear drift] \label{prop:linear_drift}
If the random walk has finite first moment in a metric $d$ on $\mathbb{H}^N$, then there exists $L \geq 0$ such that for almost every sample path $\omega \in \Omega$,
$$\lim_{n \to \infty} \frac{d(x, w_n x)}{n} = L.$$
\end{prop}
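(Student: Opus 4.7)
My plan is to apply Kingman's subadditive ergodic theorem to the sequence $a_n(\omega) \coloneqq d(x, w_n x)$ on the shift system $(\Omega, \mathbb{P}, \sigma)$, where $\sigma$ is the usual shift on $\Gamma^{\mathbb{N}}$. The key observation is that the hyperbolic metric $d$ (and indeed any $\Gamma$-invariant metric) is preserved by the action of $\Gamma$, which will give subadditivity.

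First I would verify subadditivity. Writing $w_{n+m} = w_n \cdot (g_{n+1} \cdots g_{n+m})$ and using the triangle inequality together with the $\Gamma$-invariance of $d$, we obtain
\begin{equation*}
 d(x, w_{n+m} x) \leq d(x, w_n x) + d(w_n x, w_n \cdot g_{n+1} \cdots g_{n+m} x) = d(x, w_n x) + d(x, g_{n+1} \cdots g_{n+m} x),
\end{equation*}
which is precisely $a_{n+m}(\omega) \leq a_n(\omega) + a_m(\sigma^n \omega)$. Next I would check the integrability hypothesis of Kingman's theorem: by the finite first moment assumption,
\begin{equation*}
 \mathbb{E}[a_1] = \int_{\Gamma} d(x, gx)\, d\mu(g) < +\infty,
\end{equation*}
and subadditivity then forces $\mathbb{E}[a_n^+] \leq n\, \mathbb{E}[a_1] < +\infty$ for all $n$.

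Since the shift $\sigma$ on the product space $(\Gamma^{\mathbb{N}}, \mu^{\mathbb{N}})$ is a Bernoulli shift, it is ergodic, and this ergodicity is transported to $(\Omega, \mathbb{P}, \sigma)$ via the measure-preserving map $\pi$. Kingman's subadditive ergodic theorem \cite{kingman_68} therefore yields that
\begin{equation*}
 \lim_{n \to \infty} \frac{a_n(\omega)}{n} = L
\end{equation*}
exists for $\mathbb{P}$-almost every $\omega$ and is almost surely equal to the constant $L = \inf_{n \geq 1} \frac{1}{n} \mathbb{E}[a_n]$. Because $a_n \geq 0$, we have $L \geq 0$. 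The only step that is not purely formal is the verification of subadditivity, but given the $\Gamma$-invariance of $d$ this is essentially immediate, so I expect no substantial obstacle in this proof.
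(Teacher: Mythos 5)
Your proof is correct and is exactly the argument the paper has in mind: the paper offers no written proof, stating only that the proposition follows "as an immediate application of Kingman's ergodic theorem," and your verification of subadditivity via $\Gamma$-invariance of $d$, integrability from the finite first moment, and ergodicity of the Bernoulli shift supplies precisely the missing details. No issues.
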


Moreover, if $\Gamma$ is non-elementary and $d$ is the hyperbolic metric, then $L > 0$ (\cite{furstenberg}, \cite{maher_tiozzo_18}). 
Furthermore, in hyperbolic spaces, random walks track hyperbolic geodesics quite closely: in particular, they track them up to sublinear error in the number of steps.

\begin{prop}[Sublinear tracking \cite{Kai-hyperbolic}] \label{prop:sublinear_tracking}
If the random walk has finite first moment in the hyperbolic metric $d$,
then for almost every sample path $\omega$ and $\gamma \colon [0, \infty) \to \mathbb{H}^N$ the geodesic ray connecting $x$ and $\xi^+(\omega)$, we have
$$\lim_{n \to \infty} \frac{d(w_n x, \gamma(L n))}{n} = 0.$$
\end{prop}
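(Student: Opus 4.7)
The plan is to reduce sublinear tracking to a quantitative statement about the Gromov product $(w_n x \mid \xi^+(\omega))_x$ and then establish that this product grows linearly with slope $L$.

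First, I would invoke the standard $\delta$-hyperbolic geometry estimates: letting $p_n$ denote the nearest-point projection of $w_n x$ onto the geodesic ray $\gamma$, one has
\begin{equation*}
 d(x, p_n) = (w_n x \mid \xi^+(\omega))_x + O(\delta) \quad \text{and} \quad d(w_n x, p_n) = d(x, w_n x) - d(x, p_n) + O(\delta).
\end{equation*}
Combined with the triangle inequality $d(w_n x, \gamma(Ln)) \leq d(w_n x, p_n) + |d(x, p_n) - Ln|$ and the linear drift $d(x, w_n x) = Ln + o(n)$ from \autoref{prop:linear_drift}, this shows that it suffices to prove $(w_n x \mid \xi^+(\omega))_x = Ln + o(n)$ almost surely. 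The upper bound $(w_n x \mid \xi^+(\omega))_x \leq d(x, w_n x) = Ln + o(n)$ is immediate from the definition of the Gromov product at infinity.

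For the matching lower bound, I would exploit stationarity of the increments. For $m > n$, expand
\begin{equation*}
 (w_n x \mid w_m x)_x = \tfrac{1}{2}\bigl( d(x, w_n x) + d(x, w_m x) - d(w_n x, w_m x) \bigr),
\end{equation*}
and use the key identity $d(w_n x, w_m x) = d(x, w_n^{-1} w_m x)$, where $w_n^{-1} w_m = g_{n+1} \cdots g_m$ is a product of $m-n$ i.i.d.\ $\mu$-distributed steps. A second application of Kingman's theorem to this shifted walk yields $d(x, w_n^{-1} w_m x) = L(m-n) + o(m-n)$ almost surely, so
\begin{equation*}
 (w_n x \mid w_m x)_x = Ln + o(n) + o(m-n).
\end{equation*}
Since $w_m x \to \xi^+(\omega)$ in the Gromov bordification, one has $(w_n x \mid \xi^+(\omega))_x \geq \liminf_{m \to \infty}(w_n x \mid w_m x)_x - O(\delta)$, from which $(w_n x \mid \xi^+(\omega))_x / n \to L$ almost surely.

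The main obstacle will be making the passage to the limit in $m$ rigorous: the error terms from Kingman's theorem are almost sure rather than uniform, so one has to select a diagonal subsequence $m = m(n) \to \infty$ along which the error $o(m-n)/n$ tends to zero on a single full-measure event (rather than patching together countably many events indexed by $n$ and $m$). This measurability/uniformity issue, routine but tedious, is the technical core of the argument; everything else follows from hyperbolic geometry and the ergodic theorem already stated in \autoref{prop:linear_drift}.
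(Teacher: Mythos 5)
This proposition is quoted in the paper from Kaimanovich's work and is not proved there, so there is no internal proof to compare against; I can only assess your argument on its own terms. Your reduction is sound: the identities $d(w_n x, \gamma) = d(x, w_n x) - (w_n x \mid \xi^+)_x + O(\delta)$ and $d(x, p_n) = (w_n x \mid \xi^+)_x + O(\delta)$, together with linear drift, correctly reduce the statement to $(w_n x \mid \xi^+(\omega))_x = Ln + o(n)$, and the upper bound is immediate. This is indeed how the standard proofs begin.

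The gap is in the lower bound, and it is not the measurability/diagonalization issue you flag at the end --- it is a genuine failure of the cancellation you rely on. Write $d(x, w_m x) = Lm + \epsilon_m m$ and $d(w_n x, w_m x) = L(m-n) + \eta_{n,m}(m-n)$, where $\epsilon_m \to 0$ and, for each fixed $n$, $\eta_{n,m} \to 0$ as $m \to \infty$. Your expansion gives
\begin{equation*}
 (w_n x \mid w_m x)_x = Ln + o(n) + \tfrac{1}{2}\left( \epsilon_m m - \eta_{n,m}(m-n) \right),
\end{equation*}
and the last term is a difference of two quantities that are each only $o(m)$, not $o(n)$; nothing forces it to stay above $-\epsilon n$ as $m \to \infty$ with $n$ fixed, even on a single full-measure event. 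In fact the computation is circular: its limit in $m$ is $\tfrac{1}{2}\left(-b_{\xi^+}(w_n x) - Ln\right) + o(n)$, where $b_{\xi^+}(w_n x) = \lim_m \left( d(w_n x, w_m x) - d(x, w_m x) \right)$ is the Busemann function, so that $\liminf_m (w_n x \mid w_m x)_x$ simply reproduces $(w_n x \mid \xi^+)_x$ up to $O(\delta)$. The statement you actually need is $b_{\xi^+}(w_n x) \leq -Ln + o(n)$, and this is the real content of the theorem; Kingman applied to the increments $d(w_n x, w_m x)$ does not yield it. The known proofs supply this input by a different mechanism --- for instance an ergodic theorem for the Busemann cocycle over the skew product with the boundary, using stationarity of the hitting measure and the Furstenberg-type drift formula $L = -\int b_\xi(gx)\, d\mu(g)\, d\nu(\xi)$ (as in Kaimanovich's argument and its descendants). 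Without such an ingredient your proof is incomplete at its central step.
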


In the next section, we consider also the \emph{two-sided random walk}. That is, the map $\pi \colon \Gamma^\mathbb{N} \to \Gamma^\mathbb{N}$ can be extended 
to a map $\overline{\pi} : \Gamma^\mathbb{Z} \to \Gamma^\mathbb{Z}$ by 
$$w_n \coloneqq \left\{ 
\begin{array}{ll} g_1\dots g_n & \textup{if }n \geq 1 \\
1 & \textup{if } n = 0 \\
g_0^{-1} g_{-1}^{-1} \dots g_{n+1}^{-1} & \textup{if }n \leq -1 \end{array} \right.$$
and the distribution of the two-sided random walk is given by the measure $\overline{\mathbb{P}} \coloneqq \overline{\pi}_\star(\mu^\mathbb{Z})$
on the space $\overline{\Omega} = \Gamma^\mathbb{Z}$. Let us denote by $\sigma : \Gamma^\mathbb{Z} \to \Gamma^\mathbb{Z}$
the shift on the space of increments, that is we have $(\sigma(g_n))_n \coloneqq g_{n+1}$. This map preserves the measure $\mu^\mathbb{Z}$. 

If the group $\Gamma$ is non-elementary, then both the forward and backward random walks converge almost surely to the boundary of $\mathbb{H}^N$, 
defining 
$$ \xi^+(\omega) \coloneqq \lim_{n \to \infty} w_n x, \qquad \xi^-(\omega) \coloneqq \lim_{n \to - \infty} w_n x.$$
We also define the bi-infinite geodesic $\gamma_\omega$ connecting $\xi^-(\omega)$ and $\xi^+(\omega)$. 
If $\sigma$ is the shift on the space of increments, we have by definition
$$\gamma_{\sigma^n \omega} = [w_n^{-1} \xi^-(\omega), w_n^{-1} \xi^+(\omega)] = w_n^{-1} \gamma_\omega.$$
We denote by $p_j(\omega)$ the closest point projection of $w_j x$ onto $\gamma_\omega$. 
Then for any $n \in \mathbb{Z}$, we have 
$$p_j(\sigma^n \omega) = \textup{proj}(g_{n+1} \dots g_{n+j} x, w_n^{-1} \gamma_\omega) = w_n^{-1} \textup{proj}(w_{n+j} x, \gamma_\omega) = 
w_n^{-1} p_{n+j}(\omega).$$

\section{Excursion for random walks} \label{sec:excursion_random_walks}

We are now ready to define the excursion of a bi-infinite sample path in the thin part. 

First fix an $N \geq 2$ and let $\Gamma$ be a discrete subgroup of $\Isom(\mathbb{H}^N)$ such that $M \coloneqq \mathbb{H}^N/\Gamma$ is a non-compact hyperbolic orbifold of finite volume.
Note that such a group $\Gamma$ is always finitely generated and non-elementary. Moreover, the quotient $M$ is the union of a compact part and finitely many cusps.
For each cusp of $M$, we choose a neighborhood in $M$ whose lifts are horoballs in $\mathbb{H}^N$. This gives us a $\Gamma$--invariant collection $\mathcal{H}$ of horoballs.
We can choose the neighborhoods sufficiently small such that the horoballs are pairwise disjoint and even such that they have pairwise distance at least some~$R > 4\delta$.
We refer to the complement of the cusps as the \emph{thick part} of $M$ and to the complement of~$\mathcal{H}$ as the \emph{thick part} of $\mathbb{H}^N$.

We shall also need the notion of \emph{relative metric} on $\mathbb{H}^N$, obtained by ``shrinking'' the horoballs to have finite diameter. 
Let us first define 
$$\widetilde{d}(x, y) := \left\{ \begin{array}{ll} \max\{ d(x, y), 1 \} & \textup{if }x, y \textup{ belong to the same horoball} \\
d(x, y) & \textup{otherwise} \end{array}\right.$$
and then the \emph{relative metric}  $\drel$ is the path metric induced by $\widetilde{d}$, i.e.\
$\drel(x, y) := \inf \sum_{i = 0}^{r-1} \widetilde{d}(x_i, x_{i+1})$
where the inf is taken over all chains $x = x_0, \dots, x_r = y$ of points in $\mathbb{H}^N$. 

By definition, $\drel(x, y) \leq d(x, y)$, and the diameter of each horoball in $\drel$ is at most $1$. 
Moreover, the space $(\mathbb{H}^N, \drel)$ is a non-proper, $\delta$--hyperbolic space on which $\Gamma$ acts by isometries. 
This space is quasi-isometric to the \emph{coned-off} Cayley graph of the relatively hyperbolic group $\Gamma$; see \cite[Section 4.2]{Farb} for details. 

\medskip
Now let $\mu$ be a generating probability measure on $\Gamma$ and pick $x\in \mathbb{H}^N$ to be a base point which is contained in the thick part.
As in \autoref{sec:background_random_walks}, we have the set $\overline{\Omega} \coloneqq \Gamma^\mathbb{Z}$ of bi-infinite sample paths with the measure $\overline{\mathbb{P}}$, the bi-infinite geodesic $\gamma_\omega \coloneqq [\xi^-(\omega), \xi^+(\omega)]$ and the closest point projection $p_n(\omega)$ of $w_n x$ to~$\gamma_\omega$.
Furthermore, if $\gamma$ is a geodesic, and $H \in \mathcal{H}$ a horoball which intersects $\gamma$, we denote as $p_H$ the midpoint of the intersection of $\gamma$ and $H$ (see \autoref{fig:excursion}).

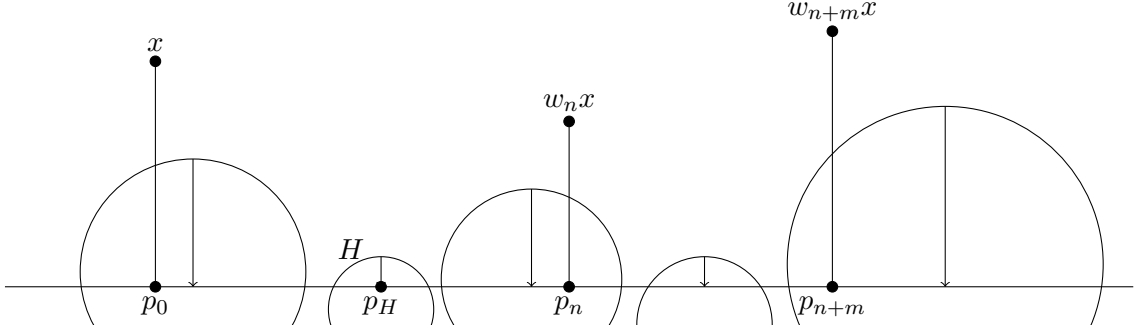
\begin{figure}
 \centering
 \begin{tikzpicture}
  \path[clip] (0,-0.5) -- (15,-0.5) -- (15,5) -- (0,5);
  \draw (0,0) -- (15,0);
  \draw (2.5,0.2) circle (1.5cm);
  \draw (5,-0.3) circle (0.7cm);
  \draw (7,0.1) circle (1.2cm);
  \draw (9.3,-0.5) circle (0.9cm);
  \draw (12.5,0.3) circle (2.1cm);
  \draw[fill] (2,3) node[above]{$x$} circle (2pt);
  \draw (2,3) -- (2,0);
  \draw[fill] (2,0) node[below]{$p_0$} circle (2pt);
  \draw[fill] (7.5,2.2) node[above]{$w_n x$} circle (2pt);
  \draw (7.5,2.2) -- (7.5,0);
  \draw[fill] (7.5,0) node[below]{$p_n$} circle (2pt);
  \draw[fill] (11,3.4) node[above]{$w_{n+m} x$} circle (2pt);
  \draw (11,3.4) -- (11,0);
  \draw[fill] (11,0) node[below]{$p_{n+m}$} circle (2pt);
  \draw[->] (5,0.4) -- (5,0);
  \draw[fill] (5,0) node[below]{$p_H$} circle (2pt);
  \path (4.6,0.5) node{$H$};
  \draw[->] (2.5,1.7) -- (2.5,0);
  \draw[->] (7,1.3) -- (7,0);
  \draw[->] (9.3,0.4) -- (9.3,0);
  \draw[->] (12.5,2.4) -- (12.5,0);
 \end{tikzpicture}
 \label{fig:excursion}
 \caption{Setting for the definition of excursion (\autoref{D:exc}).}
\end{figure}

To define the \kth excursion of the bi-infinite sample path $\omega$ in the horoballs up to step $n$, we want to consider all the excursions of $\gamma_\omega$ in all horoballs $H \in \mathcal{H}$ while making sure that every horoball is considered only once along $\gamma_\omega$. 
To do this, let us denote by $\mathcal{H}_{n, \omega}$ for every $n\geq 0$ the set of horoballs~$H \in \mathcal{H}$ such that the midpoint $p_H$ of the intersection lies between $p_0(\omega)$ and $p_n(\omega)$. 

\begin{definition}[Excursion of bi-infinite sample path] \label{D:exc}
Let $k \geq 1$ be a real number. For each $n \geq 0$, we define the \emph{\kth excursion of the bi-infinite sample path $\omega \in \overline{\Omega}$ up to step $n$} as 
$$X_n^{(k)}(\omega) \coloneqq \sum_{H \in \mathcal{H}_{n, \omega}} E(\gamma_\omega, H)^k.$$
\end{definition}

\subsection{Step average and time average of excursion}

The following proposition shows that the step average of the \kth excursion is well-defined for almost all bi-infinite sample paths.

\pagebreak

\begin{prop}[Step average of excursion exists and is finite] \label{P:limit-Xn}
Let $k' > k \geq 1$, and suppose that $\mu$ has finite $(k')^{th}$ moment in some word metric on $\Gamma$ and finite exponential moment in the relative metric. Then for almost every $\omega \in \overline{\Omega}$ the limit 
$$\lim_{n \to \infty} \frac{X_n^{(k)}(\omega)}{n}$$
exists and is finite. 
\end{prop}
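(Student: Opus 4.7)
The plan is to exhibit $X_n^{(k)}$ as (approximately) a Birkhoff sum over the shift $\sigma$ on $\overline{\Omega}$ and then invoke the Birkhoff ergodic theorem. Set $f(\omega) \coloneqq X_1^{(k)}(\omega)$. Using the equivariances $\gamma_{\sigma^i\omega} = w_i^{-1}\gamma_\omega$ and $p_j(\sigma^i\omega) = w_i^{-1}p_{i+j}(\omega)$, the $\Gamma$-invariance of $\mathcal{H}$, and the isometry-invariance of excursions, one checks that $f(\sigma^i\omega)$ equals the sum of $E(\gamma_\omega, H)^k$ over those horoballs whose midpoint on $\gamma_\omega$ lies between $p_i(\omega)$ and $p_{i+1}(\omega)$. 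Hence, whenever the projections $p_0(\omega), \ldots, p_n(\omega)$ are monotonically ordered along $\gamma_\omega$, one has the exact identity $X_n^{(k)}(\omega) = \sum_{i=0}^{n-1} f(\sigma^i\omega)$. The shift $\sigma$ is a Bernoulli shift on i.i.d.\ increments and hence ergodic, so once $f \in L^1(\overline{\mathbb{P}})$ is established, Birkhoff's theorem yields $\frac{1}{n}\sum_{i=0}^{n-1} f \circ \sigma^i \to \mathbb{E}[f]$ almost surely.

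Two error terms must then be controlled in order to pass from the Birkhoff sum back to $X_n^{(k)}$. The first is the miscount caused by possible non-monotonicity of $(p_i(\omega))$ along $\gamma_\omega$: sublinear tracking (\autoref{prop:sublinear_tracking}) together with positive drift forces $p_n$ to advance at linear speed along $\gamma_\omega$, and the finite exponential moment of $\mu$ in $\drel$ gives exponential decay of the relative distance between $w_n x$ and $\gamma_\omega$, so the total $k$-th excursion over miscounted horoballs is $o(n)$ almost surely. The second is the discrepancy between the bi-infinite $\gamma_\omega$ and the chord $[x, w_1 x]$: by $\delta$-hyperbolicity the two fellow-travel away from the endpoints, so the discrepancy is supported in bounded neighborhoods of $x$ and $w_1 x$, and can be bounded via \autoref{lem:excursion_comparison_non-intersecting_geodesics} and \autoref{lem_disjoint_horoballs_have_bounded_total_extension}, once again with the exponential $\drel$-moment controlling the expected contribution of the affected horoballs.

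The main obstacle is the integrability $f \in L^1(\overline{\mathbb{P}})$. I would use the standard comparison for lattices in $\Isom(\mathbb{H}^N)$ between the word length and the chord geometry,
\[
\sum_H E([x, g_1 x], H) \leq C\,\Vert g_1 \Vert,
\]
where the sum is over horoballs met by $[x, g_1 x]$, together with the elementary inequality $\sum_i a_i^k \leq \bigl( \sum_i a_i \bigr)^k$ valid for $a_i \geq 0$ and $k \geq 1$. This dominates the main part of $X_1^{(k)}(\omega)$ by $C^k\,\Vert g_1 \Vert^k$, while the boundary discrepancy is absorbed into a remainder $R(\omega)$ controlled as in the previous paragraph. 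The hypothesis $\int \Vert g \Vert^{k'}\, d\mu(g) < \infty$ with $k' > k$ then yields $\mathbb{E}[\Vert g_1 \Vert^k] < \infty$, and combined with $\mathbb{E}[R] < \infty$ gives $f \in L^1$. Together with the $o(n)$ control of the two error terms, Birkhoff's theorem then produces $X_n^{(k)}(\omega)/n \to \mathbb{E}[f] < \infty$ almost surely, as required. The hardest part is precisely the estimate on $R$: it is here that the exponential relative moment is used essentially, through an exponential decay estimate for geodesic tracking in the spirit of the decay results from \cite{Sunderland} and \cite{BMSS} alluded to after \autoref{thm:measures_singular}.
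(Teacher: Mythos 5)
Your strategy (an ergodic theorem applied after an $L^1$ bound on the one-step excursion) is in the right spirit, but both load-bearing steps have genuine gaps, and the paper's proof is structured precisely to avoid them. The reduction to a Birkhoff sum does not work as claimed: the identity $X_n^{(k)}(\omega)=\sum_{i=0}^{n-1}f(\sigma^i\omega)$ requires the projections $p_i$ to be monotone along $\gamma_\omega$, and the overcount when they are not is generically \emph{linear} in $n$, not $o(n)$. With positive probability per step the projection backtracks past the midpoint of some horoball (e.g.\ whenever $w_{i+1}x$ lands behind $w_ix$ relative to $\gamma_\omega$), so by stationarity the expected overcount per step is a positive constant, and in general $\lim X_n^{(k)}/n$ is strictly smaller than $\mathbb{E}[X_1^{(k)}]$. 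Sublinear tracking controls $d(w_nx,\gamma(Ln))$ but says nothing about local monotonicity, and exponential decay of $\drel(w_nx,\gamma_\omega)$ does not control the hyperbolic excursion of overcounted horoballs, since a horoball has $\drel$--diameter at most $1$ but unbounded excursion. The paper sidesteps all of this by using only the one-sided inequality $X_{n+m}^{(k)}(\omega)\le X_n^{(k)}(\omega)+X_m^{(k)}(\sigma^n\omega)$, which holds unconditionally because every $p_H\in[p_0,p_{n+m}]$ lies in $[p_0,p_n]$ or in $[p_n,p_{n+m}]$ regardless of ordering, and then invoking Kingman's \emph{subadditive} ergodic theorem rather than Birkhoff's.

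The second gap is the integrability of $f=X_1^{(k)}$. The hard term is not a boundary discrepancy near $x$ and $w_1x$: if $p_0$ and $p_1$ lie in the same horoball $H_0$ whose midpoint lies in $[p_0,p_1]$, then $X_1^{(k)}$ contains the full term $E(\gamma_\omega,H_0)^k$, where the entry and exit points of the bi-infinite geodesic $\gamma_\omega$ in $H_0$ depend on the whole sample path and can be arbitrarily far from $x$ and $w_1x$; this term is not controlled by $\Vert g_1\Vert$ at all, and \autoref{lem:excursion_comparison_non-intersecting_geodesics} and \autoref{lem_disjoint_horoballs_have_bounded_total_extension} do not apply to it. This is exactly why the paper introduces the return time $\tau$ (the first step at which the projection leaves the horoball of $p_0$ in the forward direction), proves the deterministic bound $X_n^{(k)}\le C\Vert w_{\tau}\Vert^k+C$ in that case via \autoref{L:moment} and \autoref{L:dist1}, and integrates $\Vert w_\tau\Vert^k$ by combining the exponential tail of $\tau$ (\autoref{L:tau-decay}, which is where the exponential moment in $\drel$ and the results of \cite{Sunderland} and \cite{BMSS} actually enter) with H\"older's inequality; that last step is where the strict inequality $k'>k$ is needed. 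Your sketch locates the use of the exponential relative moment in the wrong place and gives no mechanism for integrating the deep-horoball contribution; the chord estimate $\sum_H E([x,g_1x],H)\le C\Vert g_1\Vert$ together with $\sum_i a_i^k\le(\sum_i a_i)^k$ is essentially the easy half of \autoref{L:moment} and is sound, but only after the return-time reduction has been made.
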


To prove the proposition, we use Kingman's subadditive ergodic theorem \cite{kingman_68} and \autoref{P:finite-moment}, whose proof uses the thick distance.

To define the thick distance, let $x,y$ be two points in the thick part of $\mathbb{H}^N$ and $[x,y]$ the geodesic segment between them.
Furthermore, let $H_1,\ldots,H_n \in \mathcal{H}$ be the horoballs, that intersect $[x,y]$. 
Let $q_i$ and $q_i'$ be the entry and exit points in the horoball $H_i$ and order them such that $[x,y]$ goes through $x,q_1,q_1',\ldots, q_n, q_n', y$ in this order.
Then the \emph{thick distance} $\dthick$ of $x$ and $y$ is defined as
\begin{equation*}
 \dthick (x,y) = d(x, q_1) + d_{\partial H_1}(q_1,q_1') + d(q_1',q_2) + \ldots + d_{\partial H_n}(q_n,q_n') + d(q_n', y)
\end{equation*}
where $d_{\partial H_i}$ is the path metric on $\partial H_i$ induced by the hyperbolic metric $d$.

The thick distance is comparable to the word metric in the following sense (see~\cite[Lemma~2.1]{gadre_maher_tiozzo_15} for a proof).

\begin{lem} \label{lem:thick_distance_word_metric}
 For any finite generating set $S$ of $\Gamma$ and any point $x$ in the thick part of $\mathbb{H}^N$, there exists a constant $C > 0$ such that for every $g\in \Gamma$, we have
 \begin{equation*}
  \frac{1}{C} \left\| g \right\|_S - C \leq \dthick(x, gx) \leq C \left\| g \right\|_S + C
 \end{equation*}
 where $\left\| . \right\|_S$ denotes the word metric on $\Gamma$ with respect to $S$.
\end{lem}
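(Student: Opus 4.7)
The statement is essentially a Milnor--Švarc type quasi-isometry, and the plan is to identify $\dthick$ with the intrinsic path metric on the ``neutered space'' $\mathcal{N} := \mathbb{H}^N \setminus \bigsqcup_{H \in \mathcal{H}} H^\circ$ (up to bounded error), then apply the Milnor--Švarc lemma to the action of $\Gamma$ on $\mathcal{N}$. Let $d^\#$ denote the path metric on $\mathcal{N}$ induced by the hyperbolic metric on the complement and by the induced Riemannian path metric on each horosphere $\partial H$.

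For the upper bound, I would first verify a uniform bound $\dthick(x, sx) \leq C_0$ for $s \in S$. Since $S$ is finite and $d(x, sx)$ is uniformly bounded, the hyperbolic geodesic from $x$ to $sx$ can meet only finitely many horoballs, and each excursion $E([x,sx], H)$ is uniformly controlled. Writing $g = s_1 \cdots s_n$ with $n = \|g\|_S$ and concatenating the $n$ corresponding thick-distance paths (each translated into place by the preceding prefix $s_1 \cdots s_{i-1}$, which is an isometry of $\mathcal{N}$), one obtains a piecewise curve in $\mathcal{N}$ from $x$ to $gx$ of total length at most $n C_0$, so $d^\#(x, gx) \leq C_0 \, \|g\|_S$. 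It then remains to show $\dthick(x, gx) \leq d^\#(x, gx) + K$ for a universal constant $K$: this follows from $\delta$-hyperbolicity applied horoball-by-horoball, using the disjointness condition $R > 4 \delta$ together with \autoref{L:dist1}, which controls the difference between the arc lengths of two geodesics through the same horoball.

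For the lower bound, note that by construction $\dthick(x, gx)$ is the length of a specific curve from $x$ to $gx$ lying entirely in $\mathcal{N}$, so $\dthick(x, gx) \geq d^\#(x, gx)$. The quotient $\mathcal{N}/\Gamma$ is the complement in $M$ of the open cusp neighborhoods; since $M$ has finite volume and only finitely many cusps, this quotient is compact. Hence $\Gamma$ acts properly discontinuously and cocompactly by isometries on $(\mathcal{N}, d^\#)$, and the Milnor--Švarc lemma gives that the orbit map $g \mapsto g x$ is a quasi-isometry between $(\Gamma, \|\cdot\|_S)$ and $(\mathcal{N}, d^\#)$. In particular, there exists $C_1 > 0$ with
\begin{equation*}
\|g\|_S \leq C_1 \, d^\#(x, gx) + C_1 \leq C_1 \, \dthick(x, gx) + C_1,
\end{equation*}
which rearranges to the desired lower bound.

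The main obstacle is the comparison $\dthick \leq d^\# + K$, because $\dthick$ is the length of one prescribed curve (the hyperbolic geodesic with horospherical arcs substituted) rather than an infimum over paths. The point is that any competitor in $\mathcal{N}$ must also go around the same horoballs that the geodesic enters deeply, and the horospherical arc is length-competitive with any such bypass up to a $\delta$- and $R$-dependent additive constant per horoball; the deep excursions are the same in both paths, while shallow entries contribute bounded error, and these errors are additively controlled by a geometric series argument in the spirit of \autoref{lem_disjoint_horoballs_have_bounded_total_extension}.
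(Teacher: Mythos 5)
Your overall strategy is the right one, and it is essentially the proof behind the citation the paper gives (\cite[Lemma~2.1]{gadre_maher_tiozzo_15}): compare $\dthick$ with the induced path metric $d^\#$ on the neutered space $\mathcal{N}$ and apply Milnor--\v{S}varc to the cocompact action of $\Gamma$ on $(\mathcal{N}, d^\#)$. Your lower bound is complete: the $\dthick$--path is an honest path in $\mathcal{N}$ of length $\dthick(x,gx)$, so $d^\#(x,gx)\leq \dthick(x,gx)$, and cocompactness of $\mathcal{N}/\Gamma$ (finite volume, finitely many cusps) gives $\|g\|_S\leq C_1 d^\#(x,gx)+C_1$.

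The gap is in the upper bound, at the step $\dthick(x,gx)\leq d^\#(x,gx)+K$ with a \emph{uniform additive} constant $K$. This is stronger than what you need and is almost certainly false: the $\dthick$--path has a corner at each entry and exit point of each traversed horoball, a competitor in $\mathcal{N}$ can cut each such corner and save a definite positive amount per deep horoball, and the number of traversed horoballs grows linearly in $d(x,gx)$; so $\dthick - d^\#$ can grow linearly. Your proposed repair via ``a geometric series argument in the spirit of \autoref{lem_disjoint_horoballs_have_bounded_total_extension}'' does not apply here, because that lemma requires $k>1$; for $k=1$ the corresponding integral $\int_1^\infty \frac{dy}{y}$ diverges, which is exactly why the per-horoball errors are not summable. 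What is true, and suffices for the lemma, is the multiplicative comparison $\dthick(x,gx)\leq C\, d^\#(x,gx)+C$, but this needs an actual argument that you do not supply: for instance, for each horoball $H_i$ met by $[x,gx]$ the nearest-point projection onto $H_i$ is $1$--Lipschitz and coarsely contracting away from $H_i$, so any competitor path must spend length at least $E([x,gx],H_i)-C'$ inside a bounded neighborhood of $H_i$; the $R$--separation of the horoball family makes these neighborhoods disjoint so the contributions add, and the number of horoballs crossed is at most $d(x,gx)/R+1$. Alternatively one can invoke the bounded-penetration/electric-geodesic machinery of Farb. Without one of these inputs, the crucial inequality $\dthick(x,gx)\leq C\|g\|_S+C$ is not established, since $\dthick$ is the length of one prescribed path and is not a priori subadditive.
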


The following is the key ingredient for the proof of \autoref{P:limit-Xn}.

\begin{prop} \label{P:finite-moment}
If $k' > k \geq 1$ and $\mu$ has finite $(k')^{th}$-moment with respect to some word metric and has finite exponential moment with respect to $\drel$, then $X_n^{(k)} \in L^1$ for any $n \geq 0$. 
\end{prop}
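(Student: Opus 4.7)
The plan is to bound $X_n^{(k)}(\omega)$ pointwise by a power of a suitable thick distance, and then invoke the moment hypotheses on $\mu$. The starting point is the elementary inequality $\sum_i a_i^k \leq (\sum_i a_i)^k$ for $a_i \geq 0$ and $k \geq 1$ (since $(a_i/S)^k \leq a_i/S$ with $S = \sum_j a_j$), which gives
\[
X_n^{(k)}(\omega) \leq \Bigl(\sum_{H \in \mathcal{H}_{n,\omega}} E(\gamma_\omega, H)\Bigr)^k.
\]
By the very definition of $\dthick$, the inner sum equals the total boundary-arc contribution to $\dthick$ along $\gamma_\omega$ between thick endpoints $\tilde p_0, \tilde p_n$ chosen to bracket all horoballs of $\mathcal{H}_{n,\omega}$ (an extension of $[p_0(\omega), p_n(\omega)]$ through at most one horoball on each side), so the inner sum is at most $\dthick(\tilde p_0, \tilde p_n)$, and hence $X_n^{(k)}(\omega) \leq \dthick(\tilde p_0, \tilde p_n)^k$.

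Next, I would compare $\dthick(\tilde p_0, \tilde p_n)$ to $\dthick(x, w_n x)$. By $\delta$-hyperbolicity of $\mathbb{H}^N$, the projection quadrilateral with vertices $x, w_n x, \tilde p_n, \tilde p_0$ is thin and the $\gamma_\omega$-side fellow-travels the opposite side $[x, w_n x]$, yielding an estimate of the form
\[
\dthick(\tilde p_0, \tilde p_n) \leq \dthick(x, w_n x) + C\bigl(d(x, p_0(\omega)) + d(w_n x, p_n(\omega)) + 1\bigr).
\]
By $\Gamma$-equivariance, $d(w_n x, p_n(\omega))$ equals $d(x, w_n^{-1} p_n(\omega)) = d(x, p_0(\sigma^n \omega))$, which is equidistributed with $d(x, p_0(\omega))$. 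By \autoref{lem:thick_distance_word_metric}, $\dthick(x, w_n x) \leq C \|w_n\|_S + C$, and since $\|w_n\|_S \leq \sum_{i=1}^n \|g_i\|_S$ is a sum of $n$ i.i.d.\ random variables with finite $(k')^{th}$ moment by hypothesis, Minkowski's inequality yields $\dthick(x, w_n x) \in L^{k'} \subset L^k$. The integrability of $d(x, p_0(\omega))^k$ then follows from the exponential moment in $\drel$ combined with the exponential decay estimates of \cite{Sunderland, BMSS} (invoked in the forthcoming \autoref{L:tau-decay}), which give exponentially decaying tails for the distance from the base point to the random bi-infinite sample-path geodesic.

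The main obstacle is precisely this control of the correction term $d(x, p_0(\omega))$: it is the distance from a fixed base point to a random bi-infinite geodesic, and its having enough moments genuinely requires the relative-hyperbolicity exponential decay of \cite{Sunderland, BMSS} together with the exponential moment assumption on $\drel$. Without these, the bi-infinite geodesic could wander arbitrarily far from $x$ with non-summable probability, and the estimate would fail. In contrast, the elementary inequality $\sum a_i^k \leq (\sum a_i)^k$ is lossy (giving, e.g., $n^k$ when the truth is $n$ for equal summands) but entirely sufficient for the $L^1$ bound we are after.
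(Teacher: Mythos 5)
Your reduction $X_n^{(k)} \leq \bigl(\sum_H E(\gamma_\omega,H)\bigr)^k \leq \dthick(\tilde p_0,\tilde p_n)^k$ is fine and close in spirit to the paper's \autoref{L:moment}, but the comparison step is where the argument breaks. The inequality $\dthick(\tilde p_0, \tilde p_n) \leq \dthick(x, w_n x) + C\bigl(d(x,p_0)+d(w_nx,p_n)+1\bigr)$ is false: if $p_0$ (and, for small $n$, also $p_n$) lies deep inside a horoball $H$, say at depth $D$, so that $d(x,p_0)\geq D$, then $E(\gamma_\omega,H)$ is of order $e^{D}$ (by \autoref{L:comp} the excursion is $2\sinh(\ell/2)$, exponential in the depth of the chord), while $\dthick(x,w_nx)\lesssim \Vert w_n\Vert$ can stay bounded. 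So the correction term would have to be exponential, not linear, in $d(x,p_0)$. Fellow-traveling of $[p_0,p_n]$ with $[x,w_nx]$ (plus \autoref{L:dist1}) does control every horoball whose chord is shadowed by $[x,w_nx]$; what it cannot control is the horoball containing the projection points themselves. This is exactly the case the paper isolates: \autoref{L:moment} is stated only when $p_0$ and $p_n$ are not in the same horoball, and the remaining case is handled by the return time $\tau$ and the estimate $\mathbb{E}[\Vert w_{\tau}\Vert^k]<\infty$, which is the real content of the proposition (it combines \autoref{L:tau-decay} with a H\"older/multinomial expansion exploiting the finite $(k')^{th}$ word-metric moment; this is where $k'>k$ enters). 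Your proposal has no counterpart for this step.

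Second, the tail bound you invoke controls the wrong metric. The Sunderland/BMSS estimates used in \autoref{L:tau-decay} give exponential tails for $\drel(x,p_0)$, the distance to the geodesic in the \emph{relative} metric, in which every horoball has diameter at most $1$. They say nothing about the hyperbolic distance $d(x,p_0)$: one can have $\drel(x,p_0)\leq 2$ while $d(x,p_0)$ is arbitrarily large, precisely because the geodesic may dive deep into a horoball near $x$. So even if a linear correction term were the right one, its integrability would not follow as you claim. (The remaining ingredients are fine and match the paper: $\sum_i a_i^k\leq(\sum_i a_i)^k$, the bound of $\sum_H E(\gamma_\omega,H)$ by a thick distance along $\gamma_\omega$, and $\dthick(x,w_nx)\leq C\Vert w_n\Vert+C\in L^{k'}$ via \autoref{lem:thick_distance_word_metric} and Minkowski.)
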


The first step in the proof of \autoref{P:finite-moment} is the following geometric estimate. 

\begin{lem} \label{L:moment}
Let $k\geq 1$. Then there exists $C > 0$ such that for every $\omega \in \overline{\Omega}$ with $p_0(\omega)$ and $p_n(\omega)$ not in the same horoball, we have
$$ X_n^{(k)}(\omega) \leq C  \Vert w_n(\omega) \Vert^k  + C.$$
\end{lem}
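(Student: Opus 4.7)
The plan is to reduce the $k$-th moment of excursions to the first moment via an elementary power-sum inequality, identify the first moment with the thick distance along $\gamma_\omega$, and then invoke \autoref{lem:thick_distance_word_metric} to pass from thick distance to word length.

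First, for non-negative reals $a_1, \dots, a_m$ and any $k \geq 1$, one has $\sum_i a_i^k \leq \bigl(\sum_i a_i\bigr)^k$: since $a_i \leq \sum_j a_j$, we have $a_i^{k-1} \leq \bigl(\sum_j a_j\bigr)^{k-1}$, and multiplying by $a_i$ and summing over $i$ yields the claim. Applied to the excursions $E(\gamma_\omega, H)$ for $H \in \mathcal{H}_{n, \omega}$ this gives
\begin{equation*}
  X_n^{(k)}(\omega) \leq \biggl(\sum_{H \in \mathcal{H}_{n, \omega}} E(\gamma_\omega, H)\biggr)^k,
\end{equation*}
so it suffices to bound the linear excursion sum by $C \Vert w_n(\omega) \Vert + C$.

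Next, I would identify this linear sum with the thick distance along $\gamma_\omega$. The subsegment of $\gamma_\omega$ between $p_0(\omega)$ and $p_n(\omega)$ is itself a hyperbolic geodesic, and by the very definition of $\dthick$ the sum of boundary arcs over the horoballs it traverses is precisely the horoball contribution to $\dthick(p_0(\omega), p_n(\omega))$. Up to an additive constant absorbing at most two ``endpoint'' horoballs (those whose midpoint $p_H$ lies inside $[p_0, p_n]$ but whose intersection with $\gamma_\omega$ spills across one endpoint), this yields
\begin{equation*}
  \sum_{H \in \mathcal{H}_{n, \omega}} E(\gamma_\omega, H) \leq \dthick(p_0(\omega), p_n(\omega)) + C_1.
\end{equation*}

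The final step is to bound $\dthick(p_0, p_n)$ by a multiple of $\Vert w_n \Vert$. The closest-point projection onto a geodesic in a $\delta$-hyperbolic space is coarsely contracting in the hyperbolic metric, so $d(p_0, p_n) \leq d(x, w_n x) + C_2$; upgrading this to the thick distance uses that $x$ lies in the thick part by choice of basepoint and that $p_0, p_n$ are not in the same horoball by hypothesis, which together force $[x, w_n x]$ and $[p_0, p_n]$ to enter and exit essentially the same horoballs. This yields $\dthick(p_0, p_n) \leq \dthick(x, w_n x) + C_3$. Invoking \autoref{lem:thick_distance_word_metric} gives $\dthick(x, w_n x) \leq C_S \Vert w_n \Vert + C_S$, and combining everything,
\begin{equation*}
  X_n^{(k)}(\omega) \leq \bigl(C_S \Vert w_n \Vert + C_4\bigr)^k \leq 2^{k-1} C_S^k \Vert w_n \Vert^k + 2^{k-1} C_4^k,
\end{equation*}
which after collecting constants is the desired bound.

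The main obstacle will be the third step: coarse contraction of projections is classical for the hyperbolic metric but not immediate for $\dthick$, because a deep horoball detour taken by one geodesic but not the other could a priori contribute an exponentially large boundary arc. Handling it will require showing that, outside a bounded neighborhood of the endpoints, $[x, w_n x]$ and $[p_0, p_n]$ fellow-travel tightly and hit the same horoballs at nearby entry and exit points, so that a \autoref{L:dist1}-style estimate controls the discrepancy of their excursions.
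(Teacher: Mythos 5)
Your outline follows the same skeleton as the paper's proof: reduce the sum of $k$-th powers to the linear excursion sum via $\sum_i a_i^k \leq \bigl(\sum_i a_i\bigr)^k$, compare that linear sum with $\dthick(x, w_n x)$, and conclude with \autoref{lem:thick_distance_word_metric} and $(a+b)^k \leq 2^k(a^k + b^k)$. But the step you defer as ``the main obstacle'' is the entire substance of the argument, and one of your intermediate claims is false as stated. The two ``endpoint'' horoballs cannot be absorbed into an additive constant $C_1$: a horoball $H$ whose midpoint $p_H$ lies in $[p_0, p_n]$ but which contains $p_n$ in its interior has excursion $E(\gamma_\omega, H) = 2\sinh(\ell/2)$, where $\ell$ is the full chord length of the bi-infinite geodesic $\gamma_\omega$ in $H$, and this quantity is unbounded over $\omega$ and $n$. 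It must instead be compared with the excursion of $\widetilde{\gamma} = [x, w_n x]$ in the same horoball; this is case (iii) of the paper's proof, where the fact that the deepest point $p_H$ of $\gamma_\omega$ in $H$ lies in $[p_0,p_n]$, hence within a bounded distance of $\widetilde{\gamma}$, gives $E(\gamma_\omega, H) \leq 2 E(\widetilde{\gamma}, H) + C$.

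Two further points. Your target comparison $\dthick(p_0,p_n) \leq \dthick(x, w_n x) + C_3$ with a purely additive constant is stronger than what the geometry delivers: each of the $r$ horoballs met by $[p_0,p_n]$ contributes a bounded discrepancy via \autoref{L:dist1} (or a bounded excursion if it misses $\widetilde{\gamma}$ altogether), and these accumulate to a term of size $Cr$; one therefore also needs the count $r \leq \dthick(x, w_n x)/R + C_1$, coming from the minimal gap $R$ between the horoballs, and the resulting comparison is multiplicative --- which is all the lemma requires, so this is repairable but must be stated correctly. Finally, the fellow-traveling of $[p_0,p_n]$ and $[x, w_n x]$ that underlies the whole comparison only holds once $d(p_0,p_n) \geq 4\delta$; the complementary case has to be treated separately, where $X_n^{(k)}$ is uniformly bounded because each excursion is at most $2\sinh(4\delta)$ by \autoref{L:comp} and only boundedly many horoballs can meet a segment of length $4\delta$.
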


\begin{proof}
Let $\mathcal{H}_{n, \omega} = \{H_1, \ldots, H_r\}$, that is, $H_1, \dots, H_r$ are the horoballs which intersect $[p_0, p_n]$ and such that $p_{H_i} \in [p_0, p_n]$ (see \autoref{fig:cases_gamma_segment}). 

If $d(p_0, p_n) \leq 4 \delta$, then \autoref{L:comp} implies $E(\gamma, H_i) \leq 2 \sinh(4 \delta)$ for each $i$, and $r \leq \frac{4 \delta}{R} + 1$
since horoballs have minimal distance $R>0$ between them, hence $X_n^{(k)} \leq (\frac{4 \delta}{R}  +1) (2 \sinh(4 \delta))^k$ is bounded. 

Otherwise, $d(p_0, p_n) \geq 4 \delta$. Then, 
by $\delta$--hyperbolicity of $\mathbb{H}^N$, the geodesic segment $\widetilde{\gamma} \coloneqq [x, w_n x]$ passes through a $4 \delta$--neighborhood 
of both $p_0$ and $p_n$, hence every point on the segment $[p_0, p_n]$ is uniformly close to $\widetilde{\gamma}$.  

\medskip
Now, we can find a constant $C$, which depends only on $\delta$ and which can be chosen with $C \geq \delta$, 
such that the following are true for any $i = 1, \dots, r$:
\begin{enumerate}
\item
If $H_i$ does not intersect the geodesic segment $\widetilde{\gamma} \coloneqq [x, w_n x]$, then $E(\gamma, H_i) \leq C$.
This is because the geodesic segment between the entry and the exit point of $\gamma$ lies within distance~$2\delta$ of $\widetilde{\gamma}$ and hence of the thick part. Thus, the excursion is bounded.
\item
If $H_i$ intersects $\widetilde{\gamma}$ and both the entry and exit points of $\gamma$ in $H_i$ belong to $[p_0, p_n]$, then $E(\gamma, H_i) \leq E(\widetilde{\gamma}, H_i) + C$ as proven in \autoref{L:dist1}.
\item 
The third case is that $H_i$ intersects $\widetilde{\gamma}$, but only one of its endpoints (i.e.\ the entry or the exit point) 
belongs to $[p_0, p_n]$, then $$E(\gamma, H_i) \leq 2 E(\widetilde{\gamma}, H_i) + C.$$
This bound follows since the midpoint $p_H$ of the projection belongs to $[p_0, p_n]$ and we can still use an argument similar to (ii). 
\end{enumerate}

\begin{figure}
 \centering
 \begin{tikzpicture}
  \path[clip] (-1,-0.5) -- (10,-0.5) -- (10,4.5) -- (-1,4.5);
  \draw[fill] (0,0) node[below]{$p_0$} circle (2pt);
  \draw[fill] (8,0) node[below]{$p_n$} circle (2pt);
  \draw (-1,0) -- (10,0);
  \draw[fill] (0,3) node[above]{$x$} circle (2pt);
  \draw (0,3) -- (0,0);
  \draw[fill] (8,4) node[above]{$w_n x$} circle (2pt);
  \draw (8,4) -- (8,0);
  \draw (0,3) .. controls +(-80:1cm) and +(150:1cm) ..
  (1,0.5) .. controls +(-30:1cm) and +(-160:1.2cm) .. 
  (7,0.6) .. controls +(20:1.2cm) and +(-95:1cm)  .. (8,4);
  \draw (2,-0.3) circle (1cm);
  \draw (4.5,-0.8) circle (0.9cm);
  \draw (7.5,0.2) circle (1.5cm);
 \end{tikzpicture}
 \label{fig:cases_gamma_segment}
 \caption{The three cases in the proof of \autoref{L:moment}. From left to right, the three horoballs correspond to cases (ii), (i), and (iii).}
\end{figure}
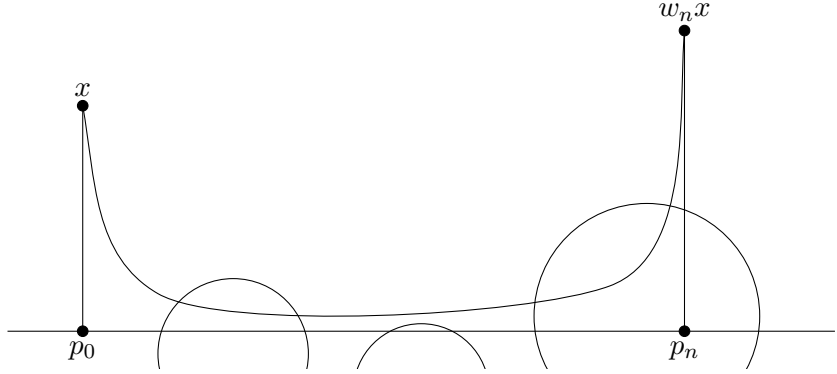

Now, let us denote as $a$ the exit point of $H_1$, and as $b$ the entry point of $H_r$, and as $a'$ the closest point projection of $a$ to $\widetilde{\gamma}$ in the thick part, and $b'$ the closest point projection of $b$ to $\widetilde{\gamma}$ in the thick part.
Since there exists a minimal distance $R > 0$ between any two horoballs of our collection, we have
$$(r-1)R \leq \dthick(a, b) \leq \dthick(a', b') + 2\max\{C, 2\delta\} \leq \dthick(x, w_n x) + 2\max\{C,2\delta\}$$
thus, 
\begin{equation} \label{E:r}
r \leq \frac{\dthick(x, w_n x)}{R} + C_1
\end{equation}
with $C_1 =  \frac{2\max\{C,2\delta\} + R}{R}$. Now, we can estimate $X_n^{(k)}$:  by using (i), (ii), and (iii) we have
$$X_n^{(k)} = \sum_{i = 1}^r E(\gamma, H_i)^k \leq \sum_{i = 1}^r \left( 2 E(\widetilde{\gamma}, H_i) + C \right)^k \leq \left( 2 \sum_{i = 1}^r E(\widetilde{\gamma}, H_i) + Cr \right)^k  \leq$$
and using that $\sum_{i = 1}^r E(\widetilde{\gamma}, H_i) \leq \dthick(x, w_n x)$ and equation \eqref{E:r}
$$\leq \left( \left( 2 + \frac{C}{R}\right) \dthick(x, w_n x) + C C_1\right)^k \leq$$
and using $\dthick(x, w_n x) \leq C_2 \Vert w_n \Vert + C_2$ (see \autoref{lem:thick_distance_word_metric}),
$$\leq \left( C_3 \Vert w_n \Vert + C_4 \right)^k $$
where $C_3 = \left( 2 + \frac{C}{R} \right) C_2$ and $C_4 = C_2\left( 2 + \frac{C}{R}\right) + C C_1$, hence (since $(x+y)^k \leq 2^k (x^k + y^k)$)
$$\leq 2^k C_3^k \Vert w_n \Vert^k + 2^k C_4^k$$
yielding the claim. 
\end{proof}

We now turn to the proof of \autoref{P:finite-moment}. Note that, if $p_0$ and $p_n$ do not belong to the same horoball, then \autoref{L:moment} yields an upper bound on $X^{(k)}_n$. 
However, it may happen that $p_0$ and $p_n$ belong to the same horoball; to consider the general case, define 
the return time $\tau : \overline{\Omega} \to \mathbb{N}$~as 
$$\tau(\omega) := \inf\{ k \geq 0 \ : \ p_k \textup{ lies in }[p_0, \xi^+)\textup{ and does not belong to the same horoball as }p_0 \}.$$
Since by construction~$p_{\tau}$ and $p_0$ do not lie in the same horoball, from \autoref{L:moment} we obtain: 
$$X^{(k)}_n(\omega) \leq \left\{ \begin{array}{ll} C \Vert w_{\tau(\omega)}(\omega) \Vert^k + C &\textup{if }p_0, p_n \textup{ belong to the same horoball} \\
C \Vert w_{n}(\omega) \Vert^k + C & \textup{otherwise}. 
\end{array} \right.$$
Therefore, it is enough to show
\begin{equation} \label{E:moment}
\int  \Vert  w_{\tau(\omega)} (\omega)  \Vert^k \ d \mathbb{P}(\omega) < + \infty
\end{equation}
to prove \autoref{P:finite-moment}.

We use the following exponential decay for the return time. 

\begin{lem} \label{L:tau-decay}
Suppose that $\mu$ has finite exponential moment with respect to $\drel$. Then there exist~$C, \alpha > 0$ such that 
$$\mathbb{P}(\omega \in \overline{\Omega} \ : \ \tau(\omega) \geq n ) \leq C e^{- \alpha n}$$
for all $n$.
\end{lem}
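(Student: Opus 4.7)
The plan is to bound the event $\{\tau(\omega) \geq n\}$ by a geometric event whose probability decays exponentially via the deviation inequalities of \cite{Sunderland} and \cite{BMSS}. By the definition of $\tau$, if $\tau(\omega) \geq n$ then the projection $p_{n-1}(\omega)$ must either (a) lie in the same horoball of $\mathcal{H}$ as $p_0(\omega)$, or (b) not lie on the forward ray $[p_0(\omega), \xi^+(\omega))$; it therefore suffices to show that each of these two events has probability at most $C e^{-\alpha n}$.

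The key input I plan to use is the following estimate, which is a standard consequence of the exponential deviation inequalities of \cite{Sunderland} and \cite{BMSS}, applicable since $\Gamma$ is relatively hyperbolic with respect to its cusp subgroups and $\mu$ has finite exponential moment in $\drel$: there exist constants $\ell, C_1, \alpha_1 > 0$ such that
\begin{equation*}
 \mathbb{P}\bigl(\, p_n(\omega) \in [p_0, \xi^+) \text{ and } \drel(p_0, p_n) \geq \ell n \,\bigr) \geq 1 - C_1 e^{-\alpha_1 n}.
\end{equation*}
Informally, both the forward linear progress of the projection and the deviation of $w_n x$ from $\gamma_\omega$ have exponentially decaying tails under our moment assumption.

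Granting this, event (a) is ruled out as follows: if $p_0$ and $p_{n-1}$ both lie in the same horoball $H$, then, since $\gamma_\omega \cap H$ is a single connected geodesic segment, the entire sub-arc of $\gamma_\omega$ between $p_0$ and $p_{n-1}$ is contained in $H$, hence $\drel(p_0, p_{n-1}) \leq 1$. This contradicts $\drel(p_0, p_{n-1}) \geq \ell(n-1)$ as soon as $n \geq \ell^{-1} + 1$. Event (b) is exactly the failure of the first condition in the displayed estimate. Combining the two contributions yields $\mathbb{P}(\tau \geq n) \leq C e^{-\alpha n}$.

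The main obstacle is extracting the displayed estimate on \emph{signed} progress of the projection $p_n$ along the random bi-infinite geodesic $\gamma_\omega$ from the cited references, which are typically phrased in terms of $\drel(x, w_n x)$ rather than the projection onto a limit geodesic. However, these two quantities are equivalent up to controlled additive error by standard coarse-geometry arguments in $\delta$--hyperbolic spaces: the closest-point projection onto a hyperbolic geodesic is coarsely Lipschitz, and the deviation of $w_n x$ from $\gamma_\omega$ itself has exponentially decaying tails by the same BMSS/Sunderland estimates, so the reduction from relative displacement to signed forward progress along $\gamma_\omega$ is essentially bookkeeping.
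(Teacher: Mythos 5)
Your proposal is correct and follows essentially the same route as the paper: both reduce $\{\tau \geq n\}$ to the failure of the event that $p_n$ (or $p_{n-1}$) lies forward of $p_0$ at relative distance $\gtrsim n$, rule out the same-horoball case via the $\drel$-diameter bound of $1$ on horoballs, and obtain the key probabilistic estimate from the linear-progress bound of \cite{Sunderland} together with the deviation inequality of \cite{BMSS}. The ``bookkeeping'' you defer is exactly what the paper carries out: it controls $\drel(x,p_0)$ and $\drel(w_nx,p_n)$ by applying the \cite{BMSS} estimate with $i=n$, $j=2n$, shifting, and passing to the limit, and it handles the signed (forward) condition by observing that $p_n\in(\xi^-,p_0]$ would force $\drel(w_nx,[x,\xi^+))\gtrsim n$, an event of exponentially small probability by the same estimate.
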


\begin{proof}
Let us consider the action of $\Gamma$ on the non-proper, $\delta$--hyperbolic space $(\mathbb{H}^N, \drel)$. 
Note that geodesics for the hyperbolic metric are unparameterized quasi-geodesics for the relative metric; moreover, there exists $D > 0$
 such that, with respect to $\drel$, any closest point projection of $w_n x$ to $[\xi^-, \xi^+]$ lies within distance $D$ of $p_n$.
 
First of all, by \cite[Theorem 1]{Sunderland}, there exist $L_1, C_1, \alpha_1 > 0$ such that 
\begin{equation} \label{E:exp-drift}
\mathbb{P}(\drel(x, w_n x) \leq L_1 n) \leq C_1 e^{- \alpha_1 n}\qquad \textup{for all }n \geq 0.
\end{equation}
Moreover, by \cite[Proposition 8.2]{BMSS},  there exist $C_2, \alpha_2 > 0$ such that 
\begin{equation} \label{E:BMSS}
\mathbb{P}( \drel(w_i x, [x, w_j x]) \geq R ) \leq C_2 e^{- \alpha_2 R}
\end{equation}
for any $1 \leq i \leq j$ and any $R \geq 0$. 
Applying \eqref{E:BMSS} with $i = n, j = 2n$ and using that the action is by isometries and the shift in the step space is measure-preserving, 
we get 
$$\mathbb{P}( \drel(x, [w_{-n} x, w_{n} x]) \geq R ) = \mathbb{P}( \drel(w_n x, [x, w_{ 2 n} x]) \geq R ) \leq C_2 e^{- \alpha_2 R}.$$
Hence, by taking the limit as $n \to \infty$, 
$$\mathbb{P}( \drel(x, p_0) \geq R ) \leq \mathbb{P}( \drel(x, [\xi^-, \xi^+]) \geq R - D) \leq C_2 e^{- \alpha_2 (R - D)}$$
and also, since the shift in the step space is measure-preserving, for any $n$, 
$$\mathbb{P}( \drel(w_n x, p_n) \geq R ) \leq \mathbb{P}( \drel(w_n x, [\xi^-, \xi^+]) \geq R - D) \leq C_2 e^{- \alpha_2 (R - D)}.$$
Thus, by setting $R = L_1 n/3$ and using the triangle inequality, we have 
\begin{align} \label{E:one-triang}
\drel(p_0, p_n) & \geq \drel (x, w_n x) - \drel(x, p_0) - \drel (w_n x, p_n)  \nonumber \\
&  \geq  L_1 n - \frac{1}{3} L_1 n - \frac{1}{3} L_1 n  = \frac{1}{3} L_1 n  > 1  
 \end{align}
for $n$ large enough, with probability at least $1 -  \left( 2 C_2 e^{\alpha_2 D} e^{-\frac{\alpha_2 L_1}{3} n } + C_1 e^{-\alpha_1 n} \right)$. 
Note that by definition of $\drel$, if $p_0$ and $p_n$ lie in the same horoball, then $\drel(p_0, p_n) \leq 1$, hence \eqref{E:one-triang}
shows that with the above probability $p_0$ and $p_n$ do not belong to the same horoball. 

Additionally, we show now that $p_n \in [p_0, \xi^+)$ with high probability. Note that there exists $D'$, depending on $\delta$, such that 
$p_n \in (\xi^-, p_0]$ implies 
$$\drel(w_n x, [x, \xi^+)) \geq \drel(p_0, p_n) - D' \geq \frac{L_1 n}{3} - D' \geq \frac{L_1 n}{4}$$ 
for $n$ large enough.
Moreover, by setting $i = n$, $R = \frac{L_1 n}{4}$ in \eqref{E:BMSS} and taking the limit as $j \to \infty$, we have
\begin{equation*}
\mathbb{P}(\drel(w_n x, [x, \xi^+)) \geq L_1 n / 4) \leq C_2 e^{- \frac{\alpha_2 L_1}{4} n }.
\end{equation*}

Hence, we obtain that, with probability at least 
$1 -  \left( 2 C_2 e^{\alpha_2 D} e^{-\frac{\alpha_2 L_1}{3} n } + C_1 e^{-\alpha_1 n} + C_2  e^{- \frac{\alpha_2 L_1}{4} n} \right)$, 
the point $p_n$ belongs to $[p_0, \xi^+)$ and $p_0$ and $p_n$ do not belong to the same horoball.
This completes the proof of the lemma. 
\end{proof}

\begin{proof}[Proof of \autoref{P:finite-moment}]
Let $Y_i := \Vert g_i \Vert$. We claim that there exist $C_3, \alpha_3 > 0$ such that, for any indices $i_1 \leq i_2 \leq \dots \leq i_k$, we have
\begin{equation} \label{E:product}
\int_{\{ \tau \geq i_k \}} Y_{i_1} Y_{i_2} \dots Y_{i_k}  \ d \mathbb{P} \leq C_3 e^{- \alpha_3  (i_1 + \dots + i_k)}. 
\end{equation}
To prove the claim, let $p > 1$ such that $k p = k'$, and $q$ such that $\frac{1}{p } + \frac{1}{q} = 1$. Then by H\"older's inequality
\begin{align*}
\int_{\{ \tau \geq i_k \}} Y_{i_1} Y_{i_2} \dots Y_{i_k} \ d \mathbb{P} & \leq \left(  \int (Y_{i_1} Y_{i_2} \dots Y_{i_k})^p \ d \mathbb{P} \right)^{1/p} \left( \int \mathbf{1}_{\{ \tau \geq i_k \}} \ d \mathbb{P}\right)^{1/q} \\
\intertext{and, comparing the geometric and arithmetic means, }
& \leq  \left(  \int \frac{Y_{i_1}^{kp} + Y_{i_2}^{kp} +  \dots + Y_{i_k}^{kp}}{k} \ d \mathbb{P} \right)^{1/p}
 \mathbb{P}( \tau \geq i_k )^{1/q}  \\
& \leq  (\mathbb{E}[Y_1^{kp}])^{1/p} \ \mathbb{P}( \tau \geq i_k )^{1/q} \\
\intertext{and by \autoref{L:tau-decay}}
& \leq   (\mathbb{E}[Y_1^{kp}])^{1/p} \ C^{1/q} e^{- \frac{\alpha}{q}  {i_k}} \\
\intertext{and, since $i_k$ is the largest index, $\frac{ {i_1}}{k} + \dots + \frac{ {i_k}}{k} \leq  {i_k}$, hence}
& \leq   (\mathbb{E}[Y_1^{kp}])^{1/p} \ C^{1/q} e^{- \frac{\alpha}{k q} (i_1 + \dots + i_k) }
\end{align*}
which proves the claim.
Now, note that 
\begin{align*}
\Vert w_{\tau(\omega)}(\omega) \Vert^k \leq \left( \Vert g_1 \Vert + \dots + \Vert g_{\tau} \Vert \right)^k & = 
 \left( \sum_{i = 0}^\infty \Vert g_i \Vert \mathbf{1}_{\{ \tau \geq i \}} \right)^k 
 = \sum_{i_1, i_2, \dots, i_k} Y_{i_1} Y_{i_2} \dots Y_{i_k} \cdot \mathbf{1}_{\{ \tau \geq \max \{i_1, \dots,  i_k \} \}}
\end{align*} 
hence by \eqref{E:product}
\begin{align*} 
\mathbb{E}[\Vert w_{\tau(\omega)}(\omega) \Vert^k] 
& \leq  \sum_{i_1, i_2, \dots, i_k} C_3 e^{- \alpha_3  (i_1 + \dots + i_k)} \leq C_3  \left( \sum_{i = 0}^\infty e^{- \alpha_3  {i}} \right)^k < + \infty
\end{align*}
which completes the proof of \autoref{P:finite-moment}.
\end{proof}

We now have all ingredients to prove that the step average of excursions exists and is finite. 

\begin{proof}[Proof of \autoref{P:limit-Xn}]
Let us first note that for all $m,n\in \mathbb{N}$
\begin{align*}
 X_m^{(k)}(\sigma^n \omega)
 & = \sum_{p_H \in [w_n^{-1} p_n, w_n^{-1} p_{n+m}] } E(w_n^{-1} \gamma, H)^k \\
 & = \sum_{w_n p_H \in [p_n, p_{n+m}]} E(\gamma, w_n H)^k \\
 & = \sum_{p_H \in [p_n, p_{n+m}] } E(\gamma, H)^k
 .
\end{align*}
Thus, since every $p_H \in [p_0, p_{n+m}]$ belongs to $[p_0, p_n]$ or to $[p_n, p_{n+m}]$, we obtain
$$X_{n+m}^{(k)}(\omega) \leq X_n^{(k)}(\omega) + X_m^{(k)}(\sigma^n \omega),$$
that is we have subadditivity for $X_n^{(k)}$. By \autoref{P:finite-moment}, $X_n^{(k)}$ is in $L^1(\overline{\mathbb{P}})$, hence the claim follows by Kingman's subadditive ergodic theorem.
\end{proof}

Let us now define a new version of the excursion, where the average is taken over continuous times instead of discrete steps. 
Given an oriented bi-infinite geodesic $\gamma$, let us parameterize it so that $\gamma(0)$ is the closest point projection of the base point $x$ to $\gamma$. 

Define for any time $t \geq 0$,
$$\mathcal{E}^{(k)}(\gamma, t) \coloneqq \sum_{p_H \in [\gamma(0), \gamma(t)]} E(\gamma, H)^k.$$
We will now see that the time average of $\mathcal{E}^{(k)}(\gamma, t)$ is almost surely well-defined. 
For any real $k \geq 1$ and any bi-infinite, oriented geodesic $\gamma$, we define the \emph{average \kth excursion} as 
$$\rho^{(k)}(\gamma) \coloneqq \lim_{t \to +\infty} \frac{\mathcal{E}^{(k)}(\gamma, t)}{t}$$
when it exists. Note that if the limit exists, its value does \emph{not} depend on the choice of $\gamma(0)$.

\begin{prop}[Time average of excursion exists and is finite] \label{cor:existence_rho}
Let $k' > k \geq 1$, and suppose that $\mu$ has finite $(k')^{th}$ moment in some word metric on $\Gamma$ and finite exponential moment in the relative metric. Then for almost every $\omega \in \overline{\Omega}$ the limit
$$\rho^{(k)}(\gamma_\omega) = \lim_{t \to +\infty} \frac{\mathcal{E}^{(k)}(\gamma_\omega, t)}{t}$$
exists and is finite. 
\end{prop}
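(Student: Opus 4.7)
The plan is to reduce to the discrete-step statement of \autoref{P:limit-Xn}. First, parameterize $\gamma_\omega$ by arc length with $\gamma_\omega(0) = p_0(\omega)$ and orientation toward $\xi^+(\omega)$, and write $p_n(\omega) = \gamma_\omega(s_n(\omega))$. The finite $(k')^{\rm th}$ moment in a word metric implies finite first moment in the hyperbolic metric (since $d(x,gx) \leq C \|g\|_S$ for some constant $C$), so \autoref{prop:linear_drift} yields a drift $L > 0$ and \autoref{prop:sublinear_tracking} gives sublinear tracking of $w_n x$ by the geodesic ray from $x$ to $\xi^+(\omega)$. A standard $\delta$-hyperbolicity argument, using that this ray is asymptotic to $\gamma_\omega$, upgrades this to $d(w_n x, \gamma_\omega(Ln)) = o(n)$; since $p_n$ is the closest point of $\gamma_\omega$ to $w_n x$, the same bound holds for $d(p_n, \gamma_\omega(Ln))$, and as both points lie on $\gamma_\omega$ this forces $s_n = Ln + o(n)$ almost surely.

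The second observation is the identity $X_n^{(k)}(\omega) = \mathcal{E}^{(k)}(\gamma_\omega, s_n)$, which follows directly from the definitions of $\mathcal{H}_{n,\omega}$ and $\mathcal{E}^{(k)}$ once $s_n \geq 0$ (which holds eventually a.s.). Combining \autoref{P:limit-Xn}, which gives $X_n^{(k)}/n \to \rho$ for some finite $\rho \geq 0$, with $s_n/n \to L > 0$, one obtains $\mathcal{E}^{(k)}(\gamma_\omega, s_n)/s_n \to \rho/L$ along the subsequence $(s_n)$.

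To pass from this subsequence to arbitrary $t$, use that $t \mapsto \mathcal{E}^{(k)}(\gamma_\omega, t)$ is monotone non-decreasing. Setting $N(t) := \max\{n : s_n \leq t\}$, which is finite since $s_n \to \infty$, one has $s_{N(t)} \leq t < s_{N(t)+1}$, and the relation $s_n = Ln + o(n)$ forces both $s_{N(t)}/t$ and $s_{N(t)+1}/t$ to tend to $1$. Monotonicity sandwiches $\mathcal{E}^{(k)}(\gamma_\omega, t)/t$ between two quantities each tending to $\rho/L$, so $\rho^{(k)}(\gamma_\omega) = \rho/L$ exists and is finite almost surely.

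The main obstacle is the first step: controlling $s_n$ in terms of $n$ via drift and sublinear tracking. Once $s_n = Ln + o(n)$ is established, the conversion from step average to time average is essentially a change of variables in a monotone function and requires no further analytic input.
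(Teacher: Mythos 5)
Your proposal is correct and follows essentially the same route as the paper: both reduce to \autoref{P:limit-Xn} via the identity $X_n^{(k)}(\omega) = \mathcal{E}^{(k)}(\gamma_\omega, s_n)$, use linear drift together with sublinear tracking to show that the projection times satisfy $s_n = Ln + o(n)$ with $L>0$ (the paper phrases this as $n_t/t \to 1/L$ for the counting function), and then sandwich $\mathcal{E}^{(k)}(\gamma_\omega,t)/t$ between consecutive terms using monotonicity. The only cosmetic difference is that you work with the hitting times $s_n$ while the paper works with their inverse $n_t$.
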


\begin{proof}
By \autoref{P:limit-Xn}, for almost every $\omega \in \overline{\Omega}$ the limit $\lim_{n \to \infty} \frac{X_n^{(k)}(\omega)}{n}$ exists. Choose such an~$\omega \in \overline{\Omega}$ and let $\gamma = \gamma_\omega$ be the bi-infinite geodesic, parameterized so that $\gamma(0) = p_0$ is the projection of the base point. 
Given $t \geq 0$, there exists a largest $n = n_t$ such that $p_n \in [p_0, \gamma(t)]$. Then by definition $\gamma(t) \in [p_n, p_{n+1}]$.
This implies
$$X^{(k)}_n(\omega) \leq \mathcal{E}^{(k)}(\gamma_\omega, t) \leq X^{(k)}_{n+1}(\omega) $$
so the limit
$$\lim_{t \to +\infty} \frac{\mathcal{E}^{(k)}(\gamma_\omega, t)}{n_t} = \lim_{n \to \infty} \frac{X^{(k)}_n(\omega)}{n}$$
exists.
Moreover, by construction we have $d(p_0, p_n) \leq d(p_0, \gamma(t)) = t \leq d(p_0, p_{n+1})$ and by the triangle inequality, it follows that
$|d(p_0, p_n) - d(x, w_n x)| \leq d(x, p_0) + d(w_n x, p_n)$ (see \autoref{fig:cases_gamma_segment} for illustration). 
Now, by sublinear tracking (\autoref{prop:sublinear_tracking}) 
\begin{equation*}
 \lim_{n \to \infty} \frac{d(w_n x, p_n)}{n} = \lim_{n \to \infty} \frac{d(w_n x, \gamma)}{n} = 0
 .
\end{equation*}
By \autoref{prop:linear_drift} and the remark thereafter, there exists an $L > 0$ such that
$$\lim_{n \to \infty} \frac{d(p_0, p_n)}{n} = L$$
almost surely. Hence we have almost surely that the limit 
$$\lim_{t \to +\infty} \frac{n_t}{t} = \lim_{t \to +\infty} \frac{n_t}{d(p_0, \gamma(t))} = \frac{1}{L}$$
exists and is positive, thus the limit 
$$\rho^{(k)}(\gamma) = \lim_{t \to +\infty} \frac{\mathcal{E}^{(k)}(\gamma, t)}{t} = \lim_{t \to + \infty}  \frac{\mathcal{E}^{(k)}(\gamma, t)}{n_t} \frac{n_t}{t}
= \frac{1}{L} \cdot \lim_{n \to \infty} \frac{X^{(k)}_n(\omega)}{n}$$
also exists. 
\end{proof}

\subsection{Comparison of excursion for one-sided and two-sided random walks} 

We now want to show that the excursion for two-sided random walks is the same as for one-sided random walks. The following proposition establishes this by showing that the excursion does not depend on the backward endpoint. Its proof is based on the fact that two bi-infinite geodesics with the same forward endpoint are exponentially close to each other.

Recall that a bi-infinite geodesic is \emph{recurrent} if its forward endpoint is not the boundary point of a horoball in the collection.

\begin{prop}[Average excursion is independent of backward endpoint] \label{prop:comparison_twosided_with_same_endpoint}
 Let $k, k' > 1$ with $k-1 < k' < k$. Suppose that $\gamma$ and $\gamma'$ are recurrent bi-infinite geodesics with the same forward endpoint, and such that $\limsup\limits_{t \to \infty} \frac{\mathcal{E}^{k'}(\gamma,t) }{t}$ is finite. Then:

 \begin{enumerate}
  \item If $\rho^{(k)}(\gamma)$ exists and is finite, then we have $\rho^{(k)}(\gamma) = \rho^{(k)}(\gamma')$.
  \item If $\rho^{(k)}(\gamma)$ is infinite, then also $\rho^{(k)}(\gamma')$ is infinite.
 \end{enumerate}
\end{prop}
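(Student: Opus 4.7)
The plan is to exploit the fact that $\gamma$ and $\gamma'$, sharing a forward endpoint, fellow-travel exponentially in forward time, so they eventually enter nearly the same horoballs with nearly equal excursions; a H\"older estimate will convert this into an $o(t)$ bound on the difference of the two excursion sums. Normalize by conjugating so that the common forward endpoint is $\infty$ in the upper half-space model; then $\gamma$ and $\gamma'$ are vertical Euclidean lines at some horizontal distance $v > 0$. Reparameterize (affecting $\rho^{(k)}$ only by a harmless time shift) so that both lie at height $e^t$ at time $t$. A horoball with finite Euclidean center $c$ and radius $r$ meets $\gamma$ if and only if $|c| \leq r$, and its chord midpoint on $\gamma$ lies at height $|c|$; the analogous statement for $\gamma'$ replaces $|c|$ by $|c-v|$. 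Since $\gamma$ is recurrent, no horoball is centered at $\infty$. Classify the horoballs as Type~I (met by both), Type~II (met only by $\gamma$), and Type~III (met only by $\gamma'$).

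I aim to establish the one-sided comparison
\[
 \mathcal{E}^{(k)}(\gamma, t) \leq \mathcal{E}^{(k)}(\gamma', t + v) + o(t) + O(1)
\]
and the symmetric version obtained by exchanging $\gamma$ and $\gamma'$. Dividing by $t$ and using $(t+v)/t \to 1$ then gives $\liminf_{t\to\infty} \mathcal{E}^{(k)}(\gamma', t)/t \geq \rho^{(k)}(\gamma)$ from the first and $\limsup_{t\to\infty} \mathcal{E}^{(k)}(\gamma', t)/t \leq \rho^{(k)}(\gamma)$ from the second, settling both (i) and (ii) at once. The $O(1)$ term absorbs the Type~II contribution: \autoref{lem_disjoint_horoballs_have_bounded_total_extension} applied with $\delta = v$ bounds $\sum_{\text{Type II}} E(\gamma, H)^k$ by a constant independent of $t$. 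Every Type~I horoball $H_i$ with $\gamma$-midpoint at time $\leq t$ has its $\gamma'$-midpoint at time at most $\log(e^t + v) \leq t + v$, so it is counted by $\mathcal{E}^{(k)}(\gamma', t+v)$; it therefore remains to control
\[
 S(t) \coloneqq \sum_{H_i \in \text{Type I},\, t_{H_i} \leq t} \bigl| E(\gamma, H_i)^k - E(\gamma', H_i)^k \bigr|.
\]

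For each such $H_i$, \autoref{L:dist1} gives $|E(\gamma, H_i) - E(\gamma', H_i)| \leq c\, e^{-d_{H_i}/2}$, where $d_{H_i}$ is the distance from the base point to the entry of $\gamma$ in $H_i$. Combining with \autoref{lem:powers_k} applied to $x = \min\{E(\gamma, H_i), E(\gamma', H_i)\}$ and $y = |E(\gamma, H_i) - E(\gamma', H_i)|$ yields
\[
 \bigl|E(\gamma, H_i)^k - E(\gamma', H_i)^k\bigr| \leq C_1\, e^{-d_{H_i}/2}\, E(\gamma, H_i)^{k-1} + C_2\, e^{-k d_{H_i}/2}.
\]
The $R$-separation of horoballs forces $d_{H_{i+1}} \geq d_{H_i} + R$, so $\sum_i e^{-\alpha d_{H_i}/2}$ is a convergent geometric series for any $\alpha > 0$, which handles the second term. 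For the first, apply H\"older with conjugate exponents $p = k'/(k' - k + 1)$ and $q = k'/(k - 1)$, both $> 1$ thanks to $k > 1$ and $k' > k - 1$, and chosen precisely so that $(k-1)q = k'$:
\[
 \sum_i e^{-d_{H_i}/2}\, E(\gamma, H_i)^{k-1} \leq \Bigl(\sum_i e^{-p d_{H_i}/2}\Bigr)^{1/p} \bigl(\mathcal{E}^{(k')}(\gamma, t)\bigr)^{1/q} = O\bigl(t^{1/q}\bigr) = o(t),
\]
since $1/q = (k-1)/k' < 1$. This gives $S(t) = o(t)$ and hence the one-sided inequality. The reverse inequality is obtained by the same argument after first noting that $\mathcal{E}^{(k')}(\gamma', t) = O(t)$; this follows from the hypothesis on $\gamma$ via the pointwise estimate $E(\gamma', H_i)^{k'} \leq C_3\, E(\gamma, H_i)^{k'} + C_4\, e^{-k' d_{H_i}/2}$ together with \autoref{lem_disjoint_horoballs_have_bounded_total_extension} applied to the Type~III horoballs.

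The main obstacle is balancing the rapid decay $e^{-d_{H_i}/2}$ coming from \autoref{L:dist1} against the factor $E(\gamma, H_i)^{k-1}$, for which no pointwise control is available and only the aggregate control via $\mathcal{E}^{(k')}$ can be used. Calibrating the H\"older exponents to use both facts simultaneously is precisely what forces the hypothesis $k - 1 < k' < k$. A secondary technicality, absorbed by the time shift $t \mapsto t + v$ in the comparison, is that a horoball whose midpoint lies near the current endpoint $\gamma(t)$ may be counted on one side but not on the other at that specific time; the shift ensures every horoball counted on the left is also counted on the right, without needing to estimate such boundary horoballs directly.
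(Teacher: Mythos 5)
Your proof is correct and follows the paper's strategy almost exactly: the same classification of horoballs into those met by both geodesics and those met by only one, the same use of \autoref{L:dist1} and \autoref{lem:powers_k} for the former and of \autoref{lem_disjoint_horoballs_have_bounded_total_extension} for the latter, and the same H\"older calibration $p = k'/(k'-k+1)$, $q = k'/(k-1)$ played against the hypothesis on $\mathcal{E}^{(k')}(\gamma,\cdot)$ --- which is exactly where $k-1 < k' < k$ enters in both arguments. The one point where your mechanism genuinely differs is the summability of $\sum_i e^{-\alpha d_i/2}$: you get geometric decay from $d_{i+1} \geq d_i + R$ using the $R$--separation of the horoball collection, whereas the paper uses only disjointness, deriving $d_{i+1} \geq d_i + c\,e^{-d_i/2}$ from \autoref{L:comp} and then the polynomial bound $e^{-d_i/2} \lesssim 1/i$ from \autoref{L:recurrence}; since the standing assumption does give pairwise distance at least $R > 4\delta$, your shortcut is legitimate and bypasses \autoref{L:recurrence} entirely. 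Your explicit half-space normalization (forward endpoint at $\infty$, chord midpoints at height $|c|$) and the comparison over all $t$ via the shift $t \mapsto t+v$ are also somewhat cleaner than the paper's choice of a near-intersection basepoint and a subsequence of thick-part times $t_n$, at the modest cost of having to sweep the finitely many horoballs straddling height $1$ (where the hypotheses of \autoref{L:dist1} and \autoref{lem_disjoint_horoballs_have_bounded_total_extension} on entry heights can fail) into the $O(1)$ term, which is harmless for a fixed pair of geodesics.
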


\begin{proof}
In the course of this proof, we will denote as $c_1, c_2, \dots$ constants which depend only on $N$, $k$, and~$\delta$. 

Consider two recurrent bi-infinite geodesics $\gamma$ and $\gamma'$ which have the same forward endpoint. 
There exists a point $p$ on $\gamma$ such that $d(p, \gamma') \leq \delta$. Since the horoballs in the
collection are disjoint with a definite distance $R > 4\delta$, we can choose $p$ such that it has distance at least $2\delta$ to all of the horoballs. Let $p'$ be the closest point projection of $p$ to $\gamma'$.
Since the value of $\rho^{(k)}(\gamma)$ does not depend on the choice of the reference point $\gamma(0)$ along the geodesic, 
we can assume that $\gamma(0) = p$ and $\gamma'(0) = p'$. 

Note that $p'$ has distance at least $\delta$ to all of the horoballs.
Then there exists a sequence of times $t_n \to \infty$ such that for every $n\in \mathbb{N}$, both $\gamma(t_n)$ and $\gamma'(t_n)$ lie in the thick part of $\mathbb{H}^N$.

We now establish an upper bound on $\left| E(\gamma', H)^k - E(\gamma, H)^k \right|$ for all the horoballs $H$, that appear in $\mathcal{E}^{(k)}(\gamma, t_n)$ or $\mathcal{E}^{(k)}(\gamma', t_n)$. 
For this, we consider separately the horoballs $H$ with $E(\gamma', H) \leq E(\gamma, H)$ and the horoballs with
$E(\gamma, H) \leq E(\gamma', H)$.

\begin{figure}
 \centering
 \begin{tikzpicture}
  \draw[fill] (0,0) node[below]{$p$} circle (2pt);
  \draw (0,0) -- (10,0) node[below]{$\gamma$};
  \draw[fill] (0,3) node[above]{$p'$} circle (2pt);
  \draw (0,3) .. controls +(-80:1cm) and +(160:1cm) ..
  (2,0.8) .. controls +(-20:2cm) and +(177:1cm) ..
  (10,0.2) node[above]{$\gamma'$};
  
  \path (1.2,0) node[below]{$q_1$};
  \path (3.8,0) node[below]{$q_2$};
  \path (2.5,1.3) node{$H$};
  \path (2.5,0.23) node{$\widetilde{H}$};
  
  \begin{scope}
   \path[clip] (0,0) -- (10,0) -- (10,3) -- (0,3);
   \draw (2.5,-0.2) circle (1.3cm);
   \draw (2.5,-1.53) circle (2cm);   
  \end{scope}
 \end{tikzpicture}
 \label{fig:rotating_horoball}
 \caption{Setting of \autoref{prop:comparison_twosided_with_same_endpoint}.}
\end{figure}

Let us consider first the case of all the horoballs $H$ with $E(\gamma', H) \leq E(\gamma, H)$. 
Note that $E(\gamma, H)$ is the distance between the entry point $q_1$ and the exit point $q_2$ along the boundary of $H$ (see \autoref{fig:rotating_horoball} for notation); however, the shortest path between $q_1$ and $q_2$ along the boundary of $H$ need not also lie in the geodesic plane $\Pi$ which contains $\gamma$ and~$\gamma'$.
As in the proof of \autoref{lem:excursion_comparison_non-intersecting_geodesics}, we can rotate $H$ around $\gamma$ to obtain another horoball $\widetilde{H}$ which intersects $\gamma$ also in $q_1$ and $q_2$, and whose boundary point is contained in the plane $\Pi$ and is separated from $\gamma'$ by $\gamma$. The shortest path between $q_1$ and $q_2$ along the boundary of $\widetilde{H}$ lies in~$\Pi$, 
and 
$$E(\gamma, H) = E(\gamma, \widetilde{H}).$$

Now, there are two subcases: 
\begin{enumerate}
 \item the rotated horoball $\widetilde{H}$ intersects $\gamma'$;
 \item the rotated horoball $\widetilde{H}$ does not intersect $\gamma'$. 
\end{enumerate}

We concentrate first on the horoballs in the first subcase:
Let $H_1, \dots, H_s$ be the horoballs which appear in $\mathcal{E}^{(k)}(\gamma, t_n)$ (i.e.\ they intersect $[p, \gamma(t_n)]$ and the midpoint of the projection also lies in $[p, \gamma(t_n)]$) with $E(\gamma', H_i) \leq E(\gamma, H_i)$ and such that the rotated horoballs $\widetilde{H}_i$ intersect both~$\gamma$ and~$\gamma'$, numbered in increasing order of distance from $p$. 

Consider now the horoball $H_i$ and denote $d_i \coloneqq d(p, H_i)$ (where we mean the distance along $\gamma$ to the entry point of the horoball).
As $d_i \geq \delta$ for all $i\in \mathbb{N}$, we have by \autoref{L:dist1}
$$0 \leq  E(\gamma, H_i) - E(\gamma', H_i)  \leq c_1 e^{-d_i /2} .$$
Then we have by \autoref{lem:powers_k} with $x=E(\gamma', H_i)$ and $y=E(\gamma, H_i) - E(\gamma', H_i)$
$$0  \leq  E(\gamma, H_i)^k - E(\gamma', H_i)^k \leq s_k c_1 e^{-d_i /2} E(\gamma', H_i)^{k-1} + s_k c_1^k e^{- k d_i/2}$$
where $s_k$ depends only on $k$. Finally, let us note that since the $H_i$ are disjoint, we have 
$$d_{i+1} \geq d_i + \ell_i$$
where $\ell_i$ is the hyperbolic distance between the entry and the exit point of $\gamma$ in $H_i$. Then, applying \autoref{L:comp} to $\widetilde{H}_i$ with $y_1 = e^{d_i}$ we obtain
$$\ell_i \geq 2 \log \left( 1 + \sqrt{2 \delta} e^{- d_i /2} \right) \geq c_2 e^{- d_i /2}$$
where $c_2 = \min \left\{ \inf\limits_{0 < x \leq 1} \frac{2 \log(1 + \sqrt{2 \delta} x)}{x} , 1 \right\} > 0$ only depends on $\delta$, and so 
$$d_{i+1} \geq d_i + c_2 e^{-d_i/2}.$$ 
Applying \autoref{L:recurrence} gives
$$d_i \geq 2 \log \left( 1 + \frac{i c_2}{2} \right),$$
hence
$$e^{-\sfrac{d_i}{2}} \leq \frac{1}{1+\frac{i c_2}{2}} \leq \frac{2}{i c_2}.$$
Together with our earlier considerations, we have 
$$ E(\gamma, H_i) ^k - E(\gamma', H_i)^k \leq \frac{2 s_k c_1}{c_2} \frac{E(\gamma', H_i)^{k-1}}{i} + s_k \left( \frac{2 c_1}{c_2} \right)^k  \frac{1}{i^k}.$$
Now, by H\"older's inequality with $p = \frac{k'}{k-1}$ and $q = \frac{k'}{k'-k+1}$, we obtain
$$ \sum_{i=1}^s E(\gamma', H_i)^{k-1} \cdot \frac{1}{i} \leq \left(  \sum_{i=1}^s E(\gamma', H_i)^{k'} \right)^{\frac{k-1}{k'}}   \left( \sum_{i=1}^s i^{-\frac{k'}{k'-k+1}} \right)^{\frac{k'-k+1}{k'}}$$
and the sum in the second parenthesis is universally bounded in terms of $k$ since $k' > k'-k+1 > 0$. Define $c_ 3 = \frac{2 s_k c_1}{c_2} \left( \sum_{i=1}^s i^{-\frac{k'}{k'-k+1}} \right)^{\frac{k'-k+1}{k'}}$ and $c_4 =  s_k \left( \frac{2 c_1}{c_2} \right)^k   \sum_{i = 1}^\infty \frac{1}{i^k}$. Then we have 
\begin{equation*}
 \sum_{i = 1}^s \left( E(\gamma, H_i)^k - E(\gamma', H_i)^k \right) \leq  c_3 \left(  \sum_{i = 1}^s E(\gamma', H_i)^{k'} \right)^{\frac{k-1}{k'}} + c_4
 .
\end{equation*}

Let us now consider the second subcase, that is when the rotated horoball $\widetilde{H}$ does not intersect~$\gamma'$. 
If we denote as $\mathcal{H}_{small}$ the set of horoballs $H$ for which the rotated horoball $\widetilde{H}$ intersects $\gamma$ and not $\gamma'$, then 
by \autoref{lem_disjoint_horoballs_have_bounded_total_extension} we have 
\begin{equation*} \label{E:type3}
\sum_{H \in \mathcal{H}_{small}} E(\gamma, H)^k  = \sum_{H \in \mathcal{H}_{small}} E(\gamma, \widetilde{H})^k \leq c_5
\end{equation*}
for some constant $c_5 > 0$ that only depends on $\delta$ and $k$.
Summarizing these considerations, we~have
\begin{align*}
 \sum_{\mathclap{\substack{H \in \mathcal{H} \\ E(\gamma', H) \leq E(\gamma, H) }}} \left( E(\gamma, H)^k - E(\gamma', H)^k \right)
 & \leq  c_3 \left(  \sum_{i=1}^s E(\gamma', H_i)^{k'} \right)^{\frac{k-1}{k'}} + c_4 + c_5 \\
 & \leq  c_3 \left(  \sum_{i=1}^s E(\gamma, H_i)^{k'} \right)^{\frac{k-1}{k'}} + c_4 + c_5.
\end{align*}
Now, in the case $E(\gamma, H) \leq E(\gamma', H)$, switching the roles of $\gamma$ and $\gamma'$ yields 
\begin{equation*}
 \sum_{\mathclap{\substack{H \in \mathcal{H} \\ E(\gamma, H) \leq E(\gamma', H) }}} \left( E(\gamma', H)^k - E(\gamma, H)^k \right) \leq  c_3 \left(  \sum_{i = 1}^s E(\gamma, H_i)^{k'} \right)^{\frac{k-1}{k'}} + c_4 + c_5
 .
\end{equation*}
Hence, by putting together both cases, we obtain
$$\left| \mathcal{E}^{(k)}(\gamma, t_n) - \mathcal{E}^{(k)}(\gamma', t_n) \right| \leq 
2c_3 \left( \mathcal{E}^{(k')}(\gamma, t) \right)^{\frac{k-1}{k'}} + 2(c_4+c_5) .$$

Now, recall that $\limsup_{t \to \infty} \frac{\mathcal{E}^{(k')} (\gamma, t)}{t}$ exists by assumption.
This implies that 
$$\lim_{t \to \infty} \frac{  \left( \mathcal{E}^{(k')} (\gamma, t) \right)^{\frac{k-1}{k'}} }{t}
= \limsup_{t \to \infty} \frac{ \left( \frac{ \mathcal{E}^{(k')}(\gamma, t) }{t} \right)^{\frac{k-1}{k'}} }{t^{\frac{k'-k+1}{k'}} } = 0$$
hence 
$$\rho^{(k)}(\gamma') = \lim_{n \to \infty} \frac{\mathcal{E}^{(k)} (\gamma', t_n)}{t_n} = \lim_{n \to \infty} \frac{\mathcal{E}^{(k)} (\gamma, t_n)}{t_n} = \rho^{(k)}(\gamma)$$
which shows both of the claimed statements.
\end{proof}

\begin{cor}[Average excursion exists and is finite] \label{cor:excursion_random_walks_linear}
 Let $k' > k  > 1$, and suppose that $\mu$ has finite $(k')^{th}$ moment in some word metric on $\Gamma$
 and finite exponential moment in the relative metric. Let $\nu$ be the hitting measure. Then for $\nu$--almost every~$\xi \in \partial \mathbb{H}^N$ and $\gamma$ the geodesic ray from $x$ to $\xi$, the average \kth excursion
 \begin{equation*}
  \rho^{(k)}(\gamma) = \lim_{t \to \infty} \frac{\mathcal{E}^{(k)}(\gamma, t)}{t}
 \end{equation*}
 exists and is finite.
 
\begin{proof}
 By definition of the hitting measure and because it is not atomic, for $\nu$--almost every~$\xi \in \partial \mathbb{H}^N$, there exists an $\omega \in \overline{\Omega}$ such that $\gamma_\omega$ is recurrent and its forward endpoint is $\xi$.
 Choose $k' > 1$ such that $k-1 < k' < k$.
 By \autoref{cor:existence_rho} for $\overline{\mathbb{P}}$--almost every such $\omega$, we have that $\rho^{(k)}(\gamma_\omega)$ and~$\rho^{(k')}(\gamma_\omega)$ exist.
 
 Now let $\gamma'$ be the bi-infinite geodesic through the base point $x$ and with forward endpoint $\xi$. Then we have by \autoref{prop:comparison_twosided_with_same_endpoint} that $\rho^{(k)}(\gamma) = \rho^{(k)}(\gamma') = \rho^{(k)}(\gamma_\omega)$ which exists.
\end{proof}
\end{cor}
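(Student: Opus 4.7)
The goal is to reduce the corollary to the two-sided results via Proposition~\ref{prop:comparison_twosided_with_same_endpoint}. The plan is to produce, for $\nu$--almost every boundary point $\xi$, a bi-infinite sample path whose forward endpoint is $\xi$ and whose bi-infinite geodesic has well-defined excursion averages, then compare that bi-infinite geodesic with the ray from the base point.

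\textbf{Step 1: choose an auxiliary exponent.} Pick $\widetilde{k} \in \mathbb{R}$ with $\max\{1, k-1\} < \widetilde{k} < k$. This is possible because $k > 1$. Since $k' > k > \widetilde{k}$, the measure $\mu$ has finite $(k')^{\text{th}}$ moment in the word metric, and of course still has finite exponential moment in $\drel$. Thus the hypotheses of \autoref{cor:existence_rho} are satisfied simultaneously for the excursion exponents $k$ and $\widetilde{k}$.

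\textbf{Step 2: transfer the bi-infinite a.s.\ statement to $\nu$.} By \autoref{cor:existence_rho}, for $\overline{\mathbb{P}}$--almost every $\omega \in \overline{\Omega}$ both $\rho^{(k)}(\gamma_\omega)$ and $\rho^{(\widetilde{k})}(\gamma_\omega)$ exist and are finite. Since $\Gamma$ is non-elementary, the hitting measure $\nu$ is non-atomic; the set of parabolic fixed points (the boundary points of horoballs in $\mathcal{H}$) is countable, hence has $\nu$--measure zero. Disintegrating $\overline{\mathbb{P}}$ along the $\overline{\mathbb{P}}$--measurable map $\omega \mapsto \xi^+(\omega)$, whose pushforward is $\nu$, we conclude that for $\nu$--almost every $\xi \in \partial \mathbb{H}^N$ there exists $\omega \in \overline{\Omega}$ with $\xi^+(\omega) = \xi$, with $\gamma_\omega$ recurrent, and such that both $\rho^{(k)}(\gamma_\omega)$ and $\rho^{(\widetilde{k})}(\gamma_\omega)$ exist and are finite.

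\textbf{Step 3: compare $\gamma_\omega$ with the ray from $x$.} Fix such a $\xi$ and such an $\omega$, and let $\gamma'$ be the bi-infinite geodesic passing through the base point $x$ with forward endpoint $\xi$; this geodesic extends the ray from $x$ to $\xi$ backward and is recurrent at its forward end. Both $\gamma_\omega$ and $\gamma'$ have forward endpoint $\xi$, and by construction $\limsup_{t \to \infty} \mathcal{E}^{(\widetilde{k})}(\gamma_\omega, t)/t = \rho^{(\widetilde{k})}(\gamma_\omega) < \infty$. Since $\widetilde{k}$ lies in the range $k-1 < \widetilde{k} < k$ required by \autoref{prop:comparison_twosided_with_same_endpoint}, part~(i) of that proposition applies and yields
\[
 \rho^{(k)}(\gamma') \ = \ \rho^{(k)}(\gamma_\omega),
\]
which in particular exists and is finite. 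Because the ray from $x$ to $\xi$ is the forward portion of $\gamma'$ (its base point projection is $x$ itself), its average $k^{\text{th}}$ excursion is $\rho^{(k)}(\gamma')$, finishing the argument.

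\textbf{Expected obstacle.} The main technical point is the transfer from a $\overline{\mathbb{P}}$--almost sure statement to a $\nu$--almost sure statement in Step~2: one has to be slightly careful that the disintegration of $\overline{\mathbb{P}}$ over $\nu$ is available and that the co-null set of $\omega$'s provided by \autoref{cor:existence_rho} still hits, for $\nu$--a.e.\ $\xi$, at least one sample path with $\xi^+(\omega) = \xi$. The rest is a direct application of \autoref{prop:comparison_twosided_with_same_endpoint} once the auxiliary exponent $\widetilde{k}$ is correctly placed in the overlap interval $(\max\{1,k-1\},k)$ and the finite $(k')^{\text{th}}$-moment hypothesis is used to cover both excursion scales.
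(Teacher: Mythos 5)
Your proposal is correct and follows essentially the same route as the paper: apply \autoref{cor:existence_rho} to get existence of the step/time averages for $\overline{\mathbb{P}}$--almost every bi-infinite sample path at two exponents, push forward to $\nu$, and then invoke \autoref{prop:comparison_twosided_with_same_endpoint} to replace $\gamma_\omega$ by the geodesic through the base point. Your introduction of a separate symbol $\widetilde{k}$ with $\max\{1,k-1\}<\widetilde{k}<k$ actually cleans up a notational collision in the paper's own proof, which reuses $k'$ for both the moment exponent in the hypothesis and the auxiliary excursion exponent.
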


This corollary completes the proof of the first part of \autoref{thm:cusp_excursion}.

\section{Excursion for the Lebesgue measure} \label{sec:excursion_Lebesgue_case}

Recall that $\Gamma$ is a discrete subgroup of $\Isom(\mathbb{H}^N)$ such that $M = \mathbb{H}^N/\Gamma$ is a hyperbolic orbifold of finite volume with cusps. Let $T^1M$ be its unit tangent bundle and for every $v \in T^1 M$, let $p_v$ be its projection to $M$.
Furthermore, let us denote by $\widetilde{\lambda}$ the normalized Liouville measure on $T^1 M$, where every fibre has measure $1$. The normalized Liouville measure is invariant under the geodesic flow by construction.
Recall that we have chosen a $\Gamma$--invariant collection $\mathcal{H}$ of disjoint horoballs in $\mathbb{H}^N$, and let $\Mthick \coloneqq (\mathbb{H}^N \setminus \mathcal{H})/\Gamma$ be the thick part of the quotient as before.

Define for any $k \geq 1$ the function $f_k : T^1 M \to \mathbb{R}$ as 
$$f_k(v) \coloneqq e^{k \cdot d(p_v, \Mthick)},$$ 
where $d(p_v, \Mthick)$ is the distance between the 
projection of the vector $v \in T^1 M$ and the thick part.

Let $\phi_t \colon T^1 M \to T^1 M$ be the geodesic flow which is ergodic. Then by the ergodic theorem, for any measurable non-negative $f \colon T^1 M \to \mathbb{R}$ and almost every starting vector $v$,
\begin{equation} \label{E:ergodic-flow}
 \lim_{t \to \infty} \frac{1}{t} \int_0^t f(\phi_s v) \ ds = \int_{T^1 M} f \ d\widetilde{\lambda}.
\end{equation}

\begin{lem} \label{lem:integral_f_k}
The integral 
$$\int_{T^1 M} f_k \ d \widetilde{\lambda}$$ 
is finite for $1 \leq k < N -1$, and infinite for $k \geq N-1$.
\end{lem}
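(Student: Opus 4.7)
The plan is to reduce the integral on $T^1 M$ to a volume integral on $M$, then model each cusp explicitly in the upper half-space and compute.

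First, note that $f_k(v) = e^{k\cdot d(p_v,\Mthick)}$ depends only on the footpoint $p_v$, so using Fubini and the fact that $\widetilde\lambda$ has unit mass on each fibre, we have
\begin{equation*}
 \int_{T^1 M} f_k\, d\widetilde\lambda = \int_M e^{k\cdot d(p, \Mthick)}\, d\mathrm{vol}_M(p),
\end{equation*}
where $\mathrm{vol}_M$ is the hyperbolic volume on $M$. Since $\Mthick$ is compact and the integrand equals $1$ there, it suffices to analyze the contribution of each cusp. Let $C$ be one such cusp and $H \in \mathcal{H}$ a horoball covering it.

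Next, conjugate so that $H = \{y \geq 1\}$ in the upper half-space model $\mathbb{H}^N = \mathbb{R}^{N-1} \times \mathbb{R}_{>0}$ with coordinates $(x,y)$ and volume form $y^{-N}\, dx\, dy$. Since $M$ has finite volume, the stabilizer $\Gamma_H$ of the cusp is a parabolic subgroup acting cocompactly by Euclidean isometries on each horosphere $\{y = y_0\}$, with quotient a flat closed $(N-1)$-orbifold of some finite volume~$V > 0$ (independent of $y_0$ since parabolic isometries preserve the $y$-coordinate). Thus the cusp $C$ is isometric to $(\mathbb{R}^{N-1}/\Gamma_H) \times \{y \geq 1\}$ with the induced hyperbolic metric.

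The distance from a point $(x,y)$ with $y \geq 1$ to the horosphere $\{y=1\}$ is realized by the vertical geodesic and equals $\log y$; this remains true after passing to the quotient because $\Gamma_H$ preserves $y$. Hence on $C$, $d(p,\Mthick) = \log y$, and we compute
\begin{equation*}
 \int_C e^{k\cdot d(p,\Mthick)}\, d\mathrm{vol} = \int_1^\infty \int_{\mathbb{R}^{N-1}/\Gamma_H} \frac{e^{k \log y}}{y^N}\, dx\, dy = V \int_1^\infty y^{k-N}\, dy.
\end{equation*}
This integral converges if and only if $k - N < -1$, i.e.\ $k < N-1$. Summing the finitely many cusp contributions, we obtain that $\int_{T^1 M} f_k\, d\widetilde\lambda$ is finite precisely when $1 \leq k < N-1$ and infinite for $k \geq N-1$.

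There is no serious obstacle: the only point requiring care is the identification of the cusp with a standard horoball quotient, which follows from the finite-volume assumption (maximal rank parabolic subgroup) and the Margulis lemma, and the observation that the footpoint distance $d(p,\Mthick)$ agrees with $\log y$ in these coordinates.
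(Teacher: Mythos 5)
Your proof is correct and follows essentially the same route as the paper's: both reduce to a footpoint volume integral over the cusp in upper half-space coordinates, use $d(p,\Mthick)=\log y$ and the volume form $y^{-N}\,dx\,dy$, and conclude from the convergence of $\int_1^\infty y^{k-N}\,dy$ exactly when $k<N-1$. The only cosmetic difference is that you integrate over the quotient $\mathbb{R}^{N-1}/\Gamma_H$ where the paper uses a compact fundamental domain $\Delta$.
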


\begin{proof}
We have
\begin{equation*}
 \int_{T^1 M} f_k \ d \widetilde{\lambda} = \int_{T^1 \Mthick} 1 \ d \widetilde{\lambda} + \int_{T^1 (M\setminus \Mthick)} f_k \ d \widetilde{\lambda}
 .
\end{equation*}
Note that the first integral on the right hand side is finite, hence we are only interested in the second integral. 

Every cusp in the quotient $M$ has a neighborhood which can be lifted to hyperbolic $N$--space~$\mathbb{H}^N$ to form a subset 
$S_\Delta \coloneqq \{ (x_1, x_2, \dots, x_N) \ : \ (x_2, \dots, x_{N}) \in \Delta, x_1 > 1 \} \subseteq \mathbb{H}^N$, where $\Delta$ is a compact set in $\mathbb{R}^{N-1}$. The set $S_\Delta$ can be chosen such that the projection of the interior of $S_\Delta$ to~$M$ is injective and a local isometry.
The hyperbolic volume form is 
$$dV = \frac{dx_1 \ dx_2 \ \ldots \ dx_N}{(x_1)^N}$$
and with $d(p_v, \Mthick) = \log (x_1)$ for every $p_v \notin \Mthick$ with first coordinate $x_1$, we have
$$\int_{T^1 (M\setminus \Mthick)} f_k \ d \widetilde{\lambda} = \int_{S_\Delta} (x_1)^k \ dV = \int_{S_\Delta} (x_1)^k \frac{dx_1 \ dx_2 \ \ldots \ dx_N}{(x_1)^N}
= \int_{\Delta} dx_2 \ \ldots \ dx_N \int_1^{+\infty}  \frac{dx_1 \ }{(x_1)^{N-k}}$$
so the integral diverges if $k \geq N-1$ and converges for $k < N-1$.
\end{proof}

\begin{lem} \label{lem:comparison_excursion_f_k}
Let $\gamma$ be a geodesic ray in $\mathbb{H}^N$ with starting direction $v$ and $H \in \mathcal{H}$ be a horoball, that intersects $\gamma$ with entry point $\gamma(t_1)$ and exit point $\gamma(t_2)$.
For every $k\geq 1$, there exists a constant~$c > 0$ which depends only on $k$ such that
$$\frac{1}{c} \int_{t_1}^{t_2} f_k(\phi_t v)\, dt \leq E(\gamma, H)^k \leq c \int_{t_1}^{t_2} f_k(\phi_t v) \, dt.$$
\end{lem}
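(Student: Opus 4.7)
The plan is to reduce to an explicit computation in the upper half-space model. By applying a hyperbolic isometry, I may assume that $H = \{(x_1, \ldots, x_N) \in \mathbb{H}^N : x_N \geq 1\}$ is the standard horoball at infinity, and further (since $\gamma$ is one-dimensional and the cusp point of $H$ is at $\infty$) that $\gamma$ lies in the totally geodesic two-plane spanned by the $x_1$- and $x_N$-axes. After a horizontal translation, $\gamma$ becomes a Euclidean semicircle of some radius $R \geq 1$ centred on the boundary, where $R$ equals the maximum height attained on $\gamma$.

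Parameterise $\gamma$ by arc length with the apex at $s = 0$, so that $x_N(s) = R\,\mathrm{sech}(s)$ and $x_1(s) = R\tanh(s)$. The entry and exit times are then $\pm T$ with $T = \mathrm{arccosh}(R)$, and the entry/exit points on the horosphere $\partial H = \{x_N = 1\}$ lie at Euclidean distance $2\sqrt{R^2-1}$ apart. Since on each horizontal level-set the induced path metric coincides with the Euclidean metric (as already used in \autoref{L:comp}), this gives
\begin{equation*}
 E(\gamma, H) = 2\sqrt{R^2 - 1}.
\end{equation*}
On the other hand, inside $H$ one has $d(p_v, \Mthick) = \log x_N$, so $f_k(\phi_s v) = R^k \mathrm{sech}^k(s)$; the substitution $u = \sinh s$ then yields
\begin{equation*}
 \int_{t_1}^{t_2} f_k(\phi_s v)\, ds = 2 R^k \int_0^{\sqrt{R^2-1}} \frac{du}{(1+u^2)^{(k+1)/2}}.
\end{equation*}

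With both quantities expressed as explicit functions of the single parameter $R \in [1, \infty)$, the lemma reduces to bounding the ratio
\begin{equation*}
 g(R) := \frac{(2\sqrt{R^2-1})^k}{2 R^k \int_0^{\sqrt{R^2-1}} (1+u^2)^{-(k+1)/2}\, du}
\end{equation*}
above and below by positive constants depending only on $k$. As $R \to \infty$, the integral converges to $\int_0^\infty (1+u^2)^{-(k+1)/2}\, du$, which is finite for $k \geq 1$, and both numerator and denominator grow like $R^k$, so $g(R)$ tends to a positive finite limit; continuity of $g$ on compact subsets of $(1, \infty)$ is automatic. The main technical step is the endpoint $R \to 1^+$: setting $w = \sqrt{R^2-1}$ and Taylor expanding gives numerator $\sim (2w)^k$ and denominator $\sim 2w$, so $g(R) \sim 2^{k-1} w^{k-1}$. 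This is exactly the obstacle I expect: while the upper bound $E(\gamma,H)^k \leq c \int f_k\, dt$ remains uniform in $R$, the matching lower bound degenerates as $R \to 1^+$ for $k > 1$, and the only way to preserve it as stated is to use that shallow excursions contribute only an additive linear-in-$t$ error when the lemma is summed over horoballs in the proof of \autoref{cor:excursion_Lebesgue_case_superlinear}, so that the delicate regime can be absorbed without affecting the conclusion.
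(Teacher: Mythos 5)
Your reduction and both closed-form expressions are correct, and they constitute essentially the paper's own proof: the paper works in the same upper half-space picture (calling the height coordinate $x_1$ and parameterizing the semicircle by the angle $\theta$ rather than by arc length), obtains $E(\gamma,H)=2R\sqrt{1-R^{-2}}=2\sqrt{R^2-1}$, and after the substitution $u=\sin\theta$ arrives at $2R^k\int_{1/R}^{1}u^{k-1}(1-u^2)^{-1/2}\,du$, which is exactly your integral under the change of variable $u=(1+v^2)^{-1/2}$. Up to the final comparison the two arguments coincide.

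The difference is that you examined the endpoint $R\to 1^+$, and what you found there is a genuine defect of the lemma as stated rather than of your argument. For $k>1$ the ratio of $E(\gamma,H)^k$ to $\int_{t_1}^{t_2}f_k(\phi_s v)\,ds$ behaves like $2^{k-1}(R^2-1)^{(k-1)/2}$ for shallow excursions and tends to $0$, so no constant $c=c(k)$ makes the left-hand inequality $\frac{1}{c}\int f_k \leq E^k$ uniform; only the right-hand inequality holds uniformly over all $R>1$ (for $k=1$ both do). The paper's proof does not address this regime: it records the integral as $c_kR^k+O(1)$, an expansion valid as $R\to\infty$, and then ``compares the last two equations,'' which yields the two-sided bound only for $R$ bounded away from $1$. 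Your proposed repair is the correct one and is all that the downstream argument needs: the lower bound is used only in \autoref{cor:excursion_Lebesgue_case_superlinear}, where, since the horoballs are pairwise at a definite positive distance, at most linearly many of them meet $[\gamma(0),\gamma(t)]$ and each shallow excursion (say with apex height at most $2$) contributes a bounded amount to $\int_0^t f_{N-1}(\phi_s v)\,ds$; discarding them therefore costs only an additive $O(t)$, which is harmless against the divergent ergodic average. Equivalently, the lemma should be restated with the lower bound asserted only for excursions of depth at least a fixed $d_0>0$, with $c$ depending on $k$ and $d_0$. With that caveat made explicit, your write-up is a complete and correct account.
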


\begin{proof}
As in the previous lemma, we can choose coordinates $(x_1, x_2, \dots, x_N)$ 
such that the metric outside of the thick part is given by 
$$ds^2 = \frac{dx_1^2 + dx_2^2 + \dots + dx_N^2}{x_1^2}.$$
and such that $\gamma$ lies in the $(x_1, x_2)$--plane.

\begin{figure}
 \begin{center}
  \begin{tikzpicture}[scale=2]
   \draw[->, gray] (-1,0) -- (4,0);
   \draw[->, gray] (0,0) node[below] {$0$} -- (0,2.3);
   
   \draw (-1,1) -- (4,1) node[right] {$H$};
   
   \fill (2,0) circle (1pt);
   \draw (3.7,0) arc (0:180:1.7);
   \draw[<->] (0.65,0.95) -- (3.35,0.95);
   \path (2,1) node[below] {$E(\gamma, H)$};
   
   \path (0.7,1) node[above left] {$\gamma(t_1)$};
   \path (3.3,1) node[above right] {$\gamma(t_2)$};
   
   \draw[<->] (0.35,-0.05) -- (1.95,-0.05);
   \path (1.15,0) node[below] {$R$};
   
   \draw (2,0) -- ++(144:1.7);
   \draw (1.6,0) arc (180:144:0.4);
   \path (1.75,0.07) node{$\theta_1$};
  \end{tikzpicture}
  \caption{Setting of \autoref{lem:comparison_excursion_f_k}.}
  \label{fig:calculation_Lebesgue}
 \end{center}
\end{figure}

Let $x_1(t)$ be the first coordinate and $x_2(t)$ be the second coordinate of $\gamma(t)$.
Now for $t_1 \leq t \leq t_2$, the distance from $\gamma(t)$ to the thick part is $\int_1^{x_1(t)} \frac{ds}{s} = \log x_1(t)$.
Then
$$\int_{t_1}^{t_2} f_k(\phi_t v) \ dt = \int _{t_1}^{t_2} (x_1(t))^k \ dt .$$
Now, we can parameterize the geodesic $\gamma(t)$ by $x_1(t) = R\sin \theta(t)$ and $x_2(t) = R\cos \theta(t)$ for some~$R > 1$ (see \autoref{fig:calculation_Lebesgue}).
As $\gamma$ has unit speed in the hyperbolic metric, we have
\begin{equation*}
 1 = \left\| \phi_t v \right\| = \frac{\sqrt{x_1'(t)^2 + x_2'(t)^2}}{x_1(t)} = \frac{\theta'(t)}{\sin \theta(t)}
 ,
\end{equation*}
hence 
$d t = \frac{d\theta}{\sin \theta}$.
Then
$$\int_{t_1}^{t_2} (x_1(t))^k \ dt = \int_{\theta_1}^{\theta_2} R^k (\sin \theta)^k \frac{d\theta}{\sin \theta} = $$
and by using the substitution $u = \sin \theta$ and $R \sin \theta_1 = x_1(t_1) = 1 = x_2(t_2) = R \sin \theta_2$
$$ = 2 \int_{1/R}^1 R^k u^{k-1} \frac{du}{\sqrt{1 - u^2}} = c_k R^k + O(1)$$
where $c_k = 2 \int_{0}^1 \frac{ u^{k-1} \, du}{\sqrt{1 - u^2}}$.
The excursion, on the other hand, is 
$$E(\gamma, H) = d_{\partial H}(\gamma(t_1), \gamma(t_2)) = 2 R \cos \theta_1 = 2 R \sqrt{1 - R^{-2}}.$$

Comparing the last two equations shows that we can choose a constant $c > 0$ as desired.
\end{proof}

With the previous two lemmas, we can now show that the \nminusoneth excursion $\mathcal{E}^{(N-1)}(\gamma,t)$ grows superlinearly in $t$ for rays $\gamma$, that are generic with respect to Lebesgue measure.

\begin{prop}[Average excursion is infinite] \label{cor:excursion_Lebesgue_case_superlinear}
Let $x\in \mathbb{H}^N$ be a base point.
For $\lambda$--almost every $\xi \in \partial \mathbb{H}^N$ and $\gamma$ the corresponding geodesic ray from $x$ to $\xi$, the average \nminusoneth excursion
\begin{equation*}
 \rho^{(N-1)}(\gamma) = \lim_{t \to +\infty} \frac{\mathcal{E}^{(N-1)}(\gamma, t)}{t}
\end{equation*}
is infinite.

\begin{proof}
Since the geodesic flow on $T^1 M$ is ergodic, 
almost every geodesic ray spends a linear amount of time in any $\epsilon$--neighborhood of the boundary of the thick part. 
Thus, for almost every geodesic ray $\gamma$, there exists a constant $c_1 >0$ and an infinite sequence $(t_n)$ of times where $\gamma$ enters a horoball and such that 
$\frac{t_n}{n} \to c_1$, thus $\frac{t_n}{t_{n+1}} \to 1$.
Note that in particular $\gamma(t_n)$ lies on the boundary of the thick part.

Now, by summing up over all horoballs which $\gamma$ enters up to time $t_n$ and applying \autoref{lem:comparison_excursion_f_k}, we have
\begin{equation*}
 c \cdot \mathcal{E}^{(N-1)}(\gamma, t_n) \geq \int_{0}^{t_n} f_k(\phi_s v) \, ds - t_n.
\end{equation*}
By the ergodic theorem, for almost every starting vector $v$, one has
\begin{equation*}
 \lim_{t \to \infty} \frac{1}{t} \int_0^t f_k(\phi_s v) \ ds= \int_{T^1 M} f_k \ d\widetilde{\lambda}.
\end{equation*}
We have shown in \autoref{lem:integral_f_k} that the right hand side is infinite for $k\geq N-1$.
Now, for every~$t \geq 0$ there exist $t_n$ and $t_{n+1}$ with $t_n \leq t \leq t_{n+1}$. This implies
\begin{equation*}
 \frac{\mathcal{E}^{(N-1)}(\gamma, t)}{t}
 \geq\frac{\mathcal{E}^{(N-1)}(\gamma, t_n)}{t_{n+1} }
 = \frac{\mathcal{E}^{(N-1)}(\gamma, t_n)}{t_n} \cdot \frac{t_n}{t_{n+1}}
 \geq \frac{\int_0^{t_n} f_{N-1}(\phi_s v) \ ds - t_n}{c \cdot t_n} \cdot \frac{t_n}{t_{n+1}} .
\end{equation*}
Then we obtain
\begin{align*}
 \rho^{(N-1)}(\gamma)
 & = \lim\limits_{t \to \infty} \frac{\mathcal{E}^{(N-1)}(\gamma, t)}{t}
 \geq \lim\limits_{n \to \infty} \frac{\int_0^{t_n} f_{N-1}(\phi_s v) \ ds - t_n}{c \cdot t_n} \cdot \frac{t_n}{t_{n+1}}
 = \frac{ \int_{T^1 M} f_{N-1} \ d \widetilde{\lambda} - 1}{c} = + \infty
 .
\end{align*}

With a similar argument as above using the upper bound from \autoref{lem:comparison_excursion_f_k} and that $\int_{T^1 M} f_k \ d\widetilde{\lambda}$ is finite for $k< N-1$ from \autoref{lem:integral_f_k}, we obtain for almost every starting vector $v$ of a geodesic ray $\gamma$ that
\begin{equation*}
 \limsup_{t \to \infty} \frac{\mathcal{E}^{(k)}(\gamma, t)}{t}
\end{equation*}
is finite for every $k<N-1$.

Combining the two arguments, we obtain a subset of $T^1 \mathbb{H}^N$ of full measure such that for the corresponding geodesic rays $\gamma$ the $\limsup_{t \to \infty} \frac{\mathcal{E}^{(N - \sfrac{3}{2})}(\gamma, t)}{t}$ is finite and $\rho^{(N-1)}(\gamma)$ is infinite.
This implies that for $\lambda$--almost every $\xi \in \partial \mathbb{H}^N$, there exists a geodesic $\gamma'$ with forward boundary point $\xi$ and a starting vector $v \in T^1 \mathbb{H}^N$ from this full measure set.
Let now $\gamma$ be the geodesic ray from the base point $x$ to the forward boundary point of $\gamma'$.
Then by \autoref{prop:comparison_twosided_with_same_endpoint}, the average \nminusoneth excursion $\rho^{(N-1)}(\gamma)$ is also infinite.
This shows that for $\lambda$--almost every $\xi \in \partial \mathbb{H}^N$, the average \nminusoneth excursion of the geodesic from $x$ to $\xi$ is infinite.
\end{proof}
\end{prop}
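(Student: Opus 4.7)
The plan is to combine the Birkhoff ergodic theorem for the geodesic flow $\phi_t$ on $T^1 M$ with the pointwise comparison in \autoref{lem:comparison_excursion_f_k}. Since the normalized Liouville measure $\widetilde{\lambda}$ is $\phi_t$-invariant and the geodesic flow on a finite-volume hyperbolic manifold is ergodic, and since $f_{N-1} \geq 0$ has infinite integral against $\widetilde{\lambda}$ by \autoref{lem:integral_f_k}, the standard non-integrable version of Birkhoff's theorem yields
\begin{equation*}
 \lim_{t \to \infty} \frac{1}{t} \int_0^t f_{N-1}(\phi_s v)\, ds = +\infty
\end{equation*}
for $\widetilde{\lambda}$-almost every $v \in T^1 M$. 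Simultaneously, for every $k < N-1$ the integral $\int f_k\, d\widetilde{\lambda}$ is finite, so Birkhoff gives a finite limit of the corresponding ergodic averages.

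Next I would translate these ergodic averages into statements about $\mathcal{E}^{(k)}(\gamma,t)$. Fix a generic $v$, lift to $\tilde v \in T^1 \mathbb{H}^N$, and let $\gamma$ be the ray it generates. By ergodicity, $\gamma$ enters horoballs infinitely often and spends a positive proportion of time near the thick/thin boundary, so there is a sequence $t_1 < t_2 < \dots$ of moments at which $\gamma(t_n)$ lies on $\partial \mathrm{thick}$ with $t_n/n \to c_1 > 0$, and hence $t_n/t_{n+1} \to 1$. Summing \autoref{lem:comparison_excursion_f_k} over the horoballs visited up to time $t_n$ and absorbing the thick-part contribution (where $f_k \equiv 1$) in a linear error, I obtain
\begin{equation*}
 c\cdot \mathcal{E}^{(N-1)}(\gamma, t_n) \geq \int_0^{t_n} f_{N-1}(\phi_s \tilde v)\, ds - t_n,
\end{equation*}
and an analogous upper bound for $\mathcal{E}^{(k)}$ with $k < N-1$. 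Dividing by $t_n$ and using Birkhoff shows $\mathcal{E}^{(N-1)}(\gamma,t_n)/t_n \to +\infty$ and $\limsup \mathcal{E}^{(k)}(\gamma,t)/t < \infty$ for every $k < N-1$. For arbitrary $t$ between $t_n$ and $t_{n+1}$, the bound $\mathcal{E}^{(N-1)}(\gamma,t)/t \geq \big(\mathcal{E}^{(N-1)}(\gamma,t_n)/t_n\big)\cdot(t_n/t_{n+1})$ promotes convergence along the subsequence to $\rho^{(N-1)}(\gamma) = +\infty$.

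Finally, I would transfer from \emph{generic starting vector} to \emph{generic boundary endpoint from the fixed base point $x$}. The argument so far produces a $\widetilde{\lambda}$-full-measure set of vectors $v$, which projects to a Lebesgue-full-measure set of forward endpoints $\xi \in \partial \mathbb{H}^N$ (for rays starting at arbitrary points). For Lebesgue-a.e.\ such $\xi$, pick a bi-infinite geodesic $\gamma'$ with forward endpoint $\xi$ coming from a generic $v$, so that both $\rho^{(N-1)}(\gamma') = +\infty$ and $\limsup \mathcal{E}^{(k')}(\gamma',t)/t$ is finite for some $k' \in (N-2, N-1)$. Then \autoref{prop:comparison_twosided_with_same_endpoint}, applied with exponents $k = N-1$ and this $k'$, says exactly that the infiniteness of $\rho^{(N-1)}$ depends only on the forward endpoint, so the ray from $x$ to $\xi$ also satisfies $\rho^{(N-1)} = +\infty$.

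The main obstacle is the bookkeeping between the ergodic integral and the discrete sum of excursions: \autoref{lem:comparison_excursion_f_k} only compares the two between a single entry and exit time, so the dividing times $t_n$ have to be chosen at the thick/thin interface and the thick-part contribution must be separated cleanly. A second, smaller obstacle is the base-point transfer, which requires establishing the finite-$(k')^{th}$-excursion hypothesis needed to invoke \autoref{prop:comparison_twosided_with_same_endpoint}; this is why I carry out the upper-bound half of the argument in parallel with the lower-bound half.
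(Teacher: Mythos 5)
Your proposal is correct and follows essentially the same route as the paper: Birkhoff's ergodic theorem for the non-negative (possibly non-integrable) functions $f_k$, the pointwise comparison of \autoref{lem:comparison_excursion_f_k} summed between thick/thin interface times $t_n$ with the thick-part contribution absorbed into a linear error, interpolation via $t_n/t_{n+1}\to 1$, and the transfer to rays from the fixed base point via \autoref{prop:comparison_twosided_with_same_endpoint} using an auxiliary exponent $k'\in(N-2,N-1)$ (the paper takes $k'=N-\sfrac{3}{2}$). You correctly identified both technical obstacles the paper also handles, so there is nothing to add.
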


This proposition completes the proof of the second part of \autoref{thm:cusp_excursion}.
Note that the previous proof also shows that $N-1$ is the smallest $k$ for which the \kth average excursion is infinite as $\limsup_{t \to \infty} \frac{\mathcal{E}^{(k)}(\gamma, t)}{t}$ is finite almost surely for every $k<N-1$.

\bibliographystyle{amsalpha}
\bibliography{literature_excursion_Hn}

\providecommand{\bysame}{\leavevmode\hbox to3em{\hrulefill}\thinspace}
\providecommand{\MR}{\relax\ifhmode\unskip\space\fi MR }
\providecommand{\MRhref}[2]{%
  \href{http://www.ams.org/mathscinet-getitem?mr=#1}{#2}
}
\providecommand{\href}[2]{#2}
\begin{thebibliography}{GGPY17}

\bibitem[BHM11]{BHM}
S\'ebastien {Blach\`ere}, Peter {Ha{\"\i}ssinsky}, and Pierre {Mathieu},
  \emph{{Mesures harmoniques et mesures quasiconformes sur les groupes
  hyperboliques}}, {Annales Scientifiques de l'\'Ecole Normale Sup\'erieure.
  Quatri\`eme S\'erie} \textbf{44} (2011), no.~4, 683--721.

\bibitem[BMSS20]{BMSS}
Adrien Boulanger, Pierre Mathieu, Cagri Sert, and Alessandro Sisto, \emph{Large
  deviations for random walks on hyperbolic spaces}, arXiv:2008.02709, 2020.

\bibitem[CM07]{CM-Conformal}
Chris Connell and Roman Muchnik, \emph{Harmonicity of quasiconformal measures
  and {P}oisson boundaries of hyperbolic spaces}, Geometric and Functional
  Analysis \textbf{17} (2007), 707--769.

\bibitem[DKN09]{DKN}
Bertrand {Deroin}, Victor {Kleptsyn}, and Andrés {Navas}, \emph{{On the
  question of ergodicity for minimal group actions on the circle}}, {Moscow
  Mathematical Journal} \textbf{9} (2009), no.~2, 263--303.

\bibitem[Far98]{Farb}
Benson Farb, \emph{Relatively hyperbolic groups}, Geometric and Functional
  Analysis \textbf{8} (1998), 810--840.

\bibitem[{Fur}63]{furstenberg}
Harry {Furstenberg}, \emph{{Noncommuting random products}}, {Transactions of
  the American Mathematical Society} \textbf{108} (1963), 377--428.

\bibitem[Fur71]{fur71}
Harry Furstenberg, \emph{Random walks and discrete subgroups of {L}ie groups},
  Dekker, New York, 1971.

\bibitem[GGPY17]{GGPY}
Ilya Gekhtman, Victor Gerasimov, Leonid Potyagailo, and Wenyuan Yang,
  \emph{Martin boundary covers {F}loyd boundary}, arXiv:1708.02133, 2017.

\bibitem[GJ93]{Guivarch-winding}
Yves Guivarc'h and Yves~Le Jan, \emph{Asymptotic winding of the geodesic flow
  on modular surfaces and continuous fractions}, Annales Scientifiques de
  l’\'Ecole Normale Sup\'erieure \textbf{26} (1993), 23--50.

\bibitem[GMT15]{gadre_maher_tiozzo_15}
Vaibhav {Gadre}, Joseph {Maher}, and Giulio {Tiozzo}, \emph{{Word length
  statistics and Lyapunov exponents for Fuchsian groups with cusps}}, {The New
  York Journal of Mathematics} \textbf{21} (2015), 511--531.

\bibitem[GMT17]{gadre_maher_tiozzo_17}
\bysame, \emph{{Word length statistics for Teichm\"uller geodesics and
  singularity of harmonic measure}}, {Commentarii Mathematici Helvetici}
  \textbf{92} (2017), no.~1, 1--36.

\bibitem[Gou21]{Gouezel-decay}
S\'ebastien Gou\"ezel, \emph{Exponential bounds for random walks without moment
  conditions on hyperbolic spaces}, 2021.

\bibitem[GT20]{GT}
Ilya Gekhtman and Giulio Tiozzo, \emph{Entropy and drift for {G}ibbs measures
  on geometrically finite manifolds}, Transactions of the American Mathematical
  Society \textbf{373} (2020), 2949--2980.

\bibitem[Kai94]{Kai-hyperbolic}
Vadim Kaimanovich, \emph{The {P}oisson boundary of hyperbolic groups}, C. R.
  Acad. Sci. Paris, S\'er. I \textbf{318} (1994), 59--64.

\bibitem[{Kin}68]{kingman_68}
John Frank~Charles {Kingman}, \emph{{The ergodic theory of subadditive
  stochastic processes}}, {Journal of the Royal Statistical Society. Series B}
  \textbf{30} (1968), 499--510.

\bibitem[KP11]{KLP}
Vadim Kaimanovich and Vincent~Le Prince, \emph{Matrix random products with
  singular harmonic measure}, Geometriae Dedicata \textbf{150} (2011),
  257--279.

\bibitem[LS84]{Lyons-Sullivan}
Terry Lyons and Dennis Sullivan, \emph{Function theory, random paths and
  covering spaces}, Journal of Differential Geometry \textbf{19} (1984),
  299--323.

\bibitem[MT18]{maher_tiozzo_18}
Joseph {Maher} and Giulio {Tiozzo}, \emph{{Random walks on weakly hyperbolic
  groups}}, {Journal f\"ur die Reine und Angewandte Mathematik} \textbf{742}
  (2018), 187--239.

\bibitem[Sul82]{Sullivan}
Dennis Sullivan, \emph{Disjoint spheres, approximation by imaginary quadratic
  numbers, and the logarithm law for geodesics}, Acta Mathematica \textbf{149}
  (1982), no.~3-4, 215--237.

\bibitem[Sun20]{Sunderland}
Matthew Sunderland, \emph{Linear progress with exponential decay in weakly
  hyperbolic groups}, Groups, Geometry and Dynamics \textbf{14} (2020),
  539--566.

\end{thebibliography}

\end{document}